\newcommand{\N}{\mathbb{N}}
\newcommand{\T}{\mathbb{T}}
\newcommand{\Z}{\mathbb{Z}}
\newcommand{\R}{\mathbb{R}}
\newtheorem{lemma}{Lemma}[section]
\newtheorem{theorem}[lemma]{Theorem}
\newtheorem{prop}[lemma]{Proposition}
\newtheorem{remark}[lemma]{Remark}
\numberwithin{equation}{section}
\begin{document}
\title[On low regularity well-posedness of the binormal flow]{On low regularity well-posedness\\ of the binormal flow}
\author[V. Banica]{Valeria Banica}
    \address[V. Banica]{Sorbonne Universit\'e, CNRS, Universit\'e de Paris, Laboratoire Jacques-Louis Lions (LJLL), F-75005 Paris, France} 
\email{Valeria.Banica@sorbonne-universite.fr}

\author[R. Luc\`a]{Renato Luc\`a}
\address[R. Luc\`a]{Institut Denis Poisson, UMR 7013, CNRS, 
Université d’Orléans, Bâtiment de Mathématiques, rue de
Chartres, F-45100 Orléans}
\email{renato.luca@univ-orleans.fr}

\author[N. Tzvetkov]{Nikolay Tzvetkov}
    \address[N. Tzvetkov]{Ecole Normale Sup\'erieure de Lyon, UMPA, UMR CNRS-ENSL 5669, 46, all\'ee d'Italie, 69364-Lyon Cedex 07, France} 
\email{nikolay.tzvetkov@ens-lyon.fr}

\author[L. Vega]{Luis Vega}
\address[L. Vega]{BCAM - Basque Center for Applied Mathematics, Alameda de Mazarredo 14, E48009 Bilbao, and Dpto. Matem\'aticas UPV/EHU Apto. 644
48080 Bilbao, Basque Country - Spain} 
\email{luis.vega@ehu.es}
\date\today
\maketitle

\begin{abstract}
We focus on a class of solutions of the binormal flow, model of the evolution of vortex filaments, 
that generate several corner singularities in
finite time. This phenomenon has been studied in \cite{BVAnnPDE, BLTV} in the regular case, which in this context is in terms of the summability of the angles of the corners generated. Our goal here is to investigate
the lower regularity case, using further the Hasimoto approach that allows to use the 1D cubic nonlinear Schr\"odinger to study the binormal flow. We first obtain a deterministic result by proving an existence result for general binormal flow solutions at low regularity. Then we obtain improved results on the above class of solutions 
by a suitable randomization of the curvature and torsion of the vortex filament.  To do so, we prove a scattering result 
for a quasi-invariance measure associated with a suitable 1D cubic nonlinear Schr\"odinger equation that we consider of independent interest. An interesting feature of this result is that we are able to identify a limit measure, which is usually not possible when working on quasi-invariant Gaussian measures for Hamiltonian PDE’s on bounded domains.
\end{abstract}

\section{Introduction}
\subsection{The framework}
We are interested in the study of the binormal flow equation 
\begin{equation}\label{BinormalVeryFirst}
\chi_t(t,x)=\chi_x\times\chi_{xx}(t,x).
\end{equation} 
This models the evolution of a vortex filament in an ideal fluid.
The unknown is a time dependent family of curves $x \mapsto \chi(t,x)$
that are parametrized by their arc-length $x \in \R$. It is easy to see that
the arc-length parametrization is preserved under the evolution. Letting $T := \chi_x$, the tangent vector $T$ satisfies the Schr\"odinger map  
$$T_t = T \wedge T_{xx}. $$

Solutions of the Schr\"odinger map, and thus corresponding solutions of the binormal flow equation, are related to the 
focusing cubic NLS equation on the line
\begin{equation}\label{CubicNLS}
iu_t+u_{xx}+ |u|^2u=0 
\end{equation} 
via the Hasimoto transformation (\cite{H}, see \S \ref{sectHas} for details). Indeed, Hasimoto proved the striking fact that 
one can produce solutions of the Schr\"odinger map and thus of the binormal 
flow
 equation, from regular enough solutions $u$ of the NLS equation \eqref{CubicNLS}. This is in a sense similar to the Madelung transform in hydrodynamics. In particular, in the case of non-vanishing solutions, the curvature $c$ and torsion $\tau$ of the curve    
 $\chi$ are given by
 $$
 c(x,t) = |u(x,t)|, \qquad \tau(x,t) = \partial_x   {\text arg \,} (u(x,t)).
 $$
The first results for the Schr\"odinger map (and first results for the binormal flow) are existence of weak solutions, without uniqueness, for the periodic case (closed curves case respectively) and the line case (infinite curves respectively) at the $H^1$ regularity level (with tangent vector $T$ at the $H^1$ regularity level). These results were obtained by compactness techniques (\cite{ZuGu,SuSuBa,NiTa}). Then using conservation laws and latter the parallel frame approach introduced by Koiso in 1996 to avoid the non-vanishing curvature condition in Hasimoto's approach, global well-posedness was obtained in $H^2$ (\cite{DiWa,ChShUh,NSVZ,RoRuSt}). More recently, suitable uniqueness of the weak solutions in the closed case in $H^1$ was obtained (\cite{ChErTz}). For more details on these results see \S 1.2 in \cite{JeSm} and \cite{ChErTz}. We note that the binormal flow solutions for which the tangent vector $T$ is at the $H^1$ level are given by curves $\gamma\in H^2_{loc}\subset \mathcal C^{\frac 32}_{loc}$ with $\gamma_{xx}\in L^2$. The very rough case of closed Lipschitz curves was analyzed with geometric measure theory tools and existence of generalized weak solutions was obtained, together with a weak-strong uniqueness result (\cite{JeSm}). The very rough case of curves presenting one corner was analyzed in a precised way, starting with the detailed evolution of self-similar solutions of the binormal flow described in \cite{GRV}. The study of perturbations of self-similar  solutions of the binormal flow was completed in \cite{BVAENS}. The case of regular polygons closed curves was first analysed numerically in \cite{JeSm2}, then further in \cite{DHV1}. The rigorous study of the case of infinite curves with several corners, was initiated in \cite{BVAnnPDE}. For various turbulent features proved for these solutions see the recent survey \cite{BVJMP}. We shall next describe these solutions. 
\\

In this article we are interested in a specific class of solutions 
of the NLS equation \eqref{CubicNLS} of the form  
\begin{equation}\label{SumOfDeltaSolution}
u(t,x)=\sum_{j\in\mathbb Z} A_j(t)\frac{e^{i\frac{(x-j)^2}{4t}}}{\sqrt{t}}=e^{i\frac\pi 4}\sum_{j\in\mathbb Z} A_j(t)e^{it\Delta}\delta_j,
\end{equation}
where $\delta_j$ is the Dirac mass at $j$ and $t>0$, and in their counterpart at the level of the binormal flow via Hasimoto's transform. \\

\subsection{Deterministic results} The existence of this type of solutions \eqref{SumOfDeltaSolution} was proved in Theorem 1.3 in \cite{BVAnnPDE} for $s>\frac 12$, inspired by the same type of ansatz used in \cite{Kita} for the subcubic NLS. Recall that the critical spaces for \eqref{CubicNLS} are $\dot H^{-\frac 12}(\mathbb R)$ for the Sobolev scale and $\mathcal F L^\infty(\mathbb R)$ for the Fourier-Lebesgue scale, i.e. tempered distributions wih Fourier transform in $L^\infty$. 
If $\{A_j(t)\}_{j\in\mathbb Z}\in l^1$ then $e^{it\xi^2}\hat{u}(t,\xi)$ is $2\pi$-periodic and $u(t)\in H^{-\frac 12^+}(\mathbb R)\cap \mathcal F L^\infty(\mathbb R)$\footnote{By $H^{-\frac 12^+}$ 
we denote $\bigcap_{\varepsilon >0} H^{-\frac 12 - \varepsilon}$. 
}, so $u$ is almost critical on the Sobolev scale and critical on the Fourier-Lebesgue scale. Also, if $\{A_j(t)\}_{j\in\mathbb Z}\in l^{2,s}$, then $e^{-i\frac{x^2}{4t}}u(t,x)$ is $4\pi t$-periodic and belongs to $H^s_{per}(0,4\pi t)$. Here for a sequence $\{ a_j \}_{j \in \Z}$ of complex numbers, we will abbreviate to 
$\{ a_j \}$ and we define its $l^{2,s}$ norm as
$$
\| \{ a_j \} \|_{l^{2,s}}^2 := \sum_{j \in\Z } \langle j \rangle^{2s} |a_j|^2.
$$
In particular $u(t)$ defined in \eqref{SumOfDeltaSolution} is in $H^{s}_{loc}(\mathbb R)$,  provided $\{A_j(t)\}_{j\in\mathbb Z}\in l^{2,s}$. Indeed, for $s$ integer this is obtained by reduction to a finite number of intervals of size $4\pi t$, and the cases when $s$ is not an integer follow by interpolation. 
 The solutions \eqref{SumOfDeltaSolution} focalise at $t=0$ and, at least in the case in which the 
coefficients $A_j$ decay sufficiently fast, i.e. $\{A_j(t)\}\in l^{2,s}$ with $s>\frac 32$, were used in \cite{BVAnnPDE} to construct solutions of the binormal flow that are infinite curves exhibiting 
corners in the limit $t \to 0$. This strong decay of $A_j$, that we call in this paper ``the regular case", implies local regularity of the corresponding binormal flow solutions. 
%
Now we recall the main results from \cite{BLTV}, starting with the following well-posedness result for the cubic NLS equation \eqref{CubicNLS}. The construction of the solutions will be briefly described in \S 2.2-\S2.3.
\begin{theorem}[Theorem 1.4 and Lemma 2.4 in \cite{BLTV}]\label{wbl}
Let $s >0$. For any $\{ a_j \} \in l^{2,s}$ there exists a unique (in the sense of Section \ref{ssectconstr}) 
sequence of functions $\{ A_j(t) \} \in C((0, 1] ; l^{2,s})$ such that \eqref{SumOfDeltaSolution} is a solution of the
cubic NLS equation \eqref{CubicNLS} on $t \in (0,1]$ with  initial data
prescribed, at time $t =1$, by $\{ A_j(1) \} = \{ a_j \}$.  
The coefficients are bounded uniformly in time 
\begin{equation}\label{est2s}
\sup_{t\in(0,1]}\|\{A_j(t)\}\|_{l^{2,s}}\leq C(\|\{a_j\}\|_{l^{2}})\|\{a_j\}\|_{l^{2,s}},
\end{equation}
but  they fail to converge in the limit $t \to 0$ (although the modulus $|A_j(t)|$ converges, for all~$j \in \Z$).
More precisely, there exists a sequence $\{ \alpha_j\} \in l^{2,s}$ such that
\begin{equation}\label{blupth}
|A_j(t)-e^{ i(|\alpha_j|^2-2\mu)\log t}\alpha_j|\leq C(\|\{a_j\}\|_{l^{2,s}}) \,t, \quad \forall j\in\mathbb Z, t\in(0,1),
\end{equation}
where 
$$
\mu := \sum_{j \in \Z} |A_j(t)|^2 = \sum_{j \in \Z} |a_j|^2
$$
is a conserved quantity.
\end{theorem}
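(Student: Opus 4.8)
The plan is to restrict \eqref{CubicNLS} to the ansatz \eqref{SumOfDeltaSolution}, which turns the PDE into an infinite system of coupled ODEs for the profiles $\{A_j(t)\}$; to solve this system by a contraction argument down to $t=0$, after a gauge change removing its logarithmically divergent diagonal part; and then to read off the $t\to0$ asymptotics from the structure of the nonlinearity.

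\textbf{Step 1: reduction to an ODE system.} Write $g_j(t,x):=t^{-1/2}e^{i(x-j)^2/(4t)}$, so that $u=\sum_jA_j(t)g_j$ up to a fixed constant. Each $g_j$ solves $i\partial_tg_j+\partial_x^2g_j=0$, so the linear part of \eqref{CubicNLS} only contributes $i\sum_jA_j'(t)g_j$, while for the cubic term the elementary identity $(x-j)^2-(x-k)^2+(x-l)^2=(x-(j-k+l))^2+2(j-k)(k-l)$ gives $g_j\overline{g_k}g_l=t^{-1}e^{i(j-k)(k-l)/(2t)}g_{j-k+l}$. Collecting the coefficient of each $g_m$ and splitting off the resonant set $\{(j-k)(k-l)=0\}$ (i.e.\ $j=k$ or $k=l$) yields, after separating resonant and non-resonant interactions and up to a fixed multiplicative constant, the system
\[ i\,A_m'(t)=\frac1t\Big(\big(2\mu-|A_m(t)|^2\big)A_m(t)+\mathcal N_m(t)\Big),\qquad \mathcal N_m(t):=\sum_{\substack{j-k+l=m\\(j-k)(k-l)\neq0}}A_j\overline{A_k}A_l\,e^{i(j-k)(k-l)/(2t)}, \]
where $\mu:=\sum_j|A_j|^2$, the resonant contribution being the diagonal $(2\mu-|A_m|^2)A_m$ and $\mathcal N_m$ having every phase oscillating with an integer frequency $(j-k)(k-l)$ of modulus $\ge1$. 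A symmetrization of $\mathcal N_m$ shows $\mathrm{Im}\sum_m\overline{A_m}\mathcal N_m=0$, hence $\mu$ is conserved by the system, which is the conserved quantity in the statement.

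\textbf{Step 2: gauge change, integral formulation, oscillation.} Since $2\mu-|A_m|^2$ is real, setting $B_m(t):=e^{i\Theta_m(t)}A_m(t)$ with $\Theta_m(t):=\int_t^1\sigma^{-1}\big(|A_m(\sigma)|^2-2\mu\big)\,d\sigma$ preserves moduli ($|B_m|=|A_m|$) and cancels the diagonal part, reducing the system to the integral equation
\[ B_m(t)=a_m+i\int_t^1\frac{e^{i\Theta_m(\sigma)}}{\sigma}\,\widetilde{\mathcal N}_m(\sigma)\,d\sigma, \]
where $\widetilde{\mathcal N}_m$ is $\mathcal N_m$ re-expressed through the $B_j$'s times slowly varying gauge factors. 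The key gain is that the singular weight $1/\sigma$ now always multiplies an oscillating factor $e^{i\lambda/(2\sigma)}$ with integer frequency $\lambda=(j-k)(k-l)$, $|\lambda|\ge1$: one integration by parts in $\sigma$, using $\sigma^{-1}e^{i\lambda/(2\sigma)}=\tfrac{2i\sigma}{\lambda}\partial_\sigma e^{i\lambda/(2\sigma)}$, converts $\int_t^1\sigma^{-1}e^{i\lambda/(2\sigma)}(\cdots)\,d\sigma$ into boundary terms of size $O(|\lambda|^{-1}\sigma)$ plus an absolutely convergent integral carrying an extra factor $\sigma$, both with the gain $|\lambda|^{-1}\le1$. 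This simultaneously kills the would-be logarithmic divergence as $\sigma\to0$ and produces the $O(t)$ rate in \eqref{blupth}.

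\textbf{Step 3: a priori bound, construction, main obstacle.} On each $[\varepsilon,1]$ the system is a genuine ODE in $l^{2,s}$ with locally Lipschitz right-hand side, so existence, uniqueness in the class of Section~\ref{ssectconstr}, and $C([\varepsilon,1];l^{2,s})$-regularity are standard Picard theory; the content is the uniform bound \eqref{est2s}, which allows $\varepsilon\to0$. The $l^2$-norm is exactly conserved ($=\sqrt\mu$), while for the weighted norm one writes $\frac{d}{dt}\|\{A_j(t)\}\|_{l^{2,s}}^2=\frac{d}{dt}\|\{B_j(t)\}\|_{l^{2,s}}^2=\frac2t\,\mathrm{Im}\sum_m\langle m\rangle^{2s}\overline{B_m}\,e^{i\Theta_m}\widetilde{\mathcal N}_m$, integrates from $t$ to $1$, applies the integration by parts of Step~2, and estimates the resulting trilinear sums by Cauchy--Schwarz over the near-resonant set $\{(j,k,l):j-k+l=m,\ (j-k)(k-l)=\lambda\}$, using $\langle j-k+l\rangle^s\lesssim\langle j\rangle^s+\langle k\rangle^s+\langle l\rangle^s$, the $|\lambda|^{-1}$ gain and the conserved $l^2$-bound. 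This gives $\sup_{t\in(0,1]}\|\{A_j(t)\}\|_{l^{2,s}}\leq C(\|\{a_j\}\|_{l^2})\|\{a_j\}\|_{l^{2,s}}$, hence a solution on all of $(0,1]$; uniqueness follows from the same estimate applied to the difference of two solutions. I expect this weighted trilinear estimate in the regime $0<s\leq\frac12$, where $l^{2,s}$ is not an algebra so no frequency can be lost and the $|\lambda|^{-1}$ gain must be carefully apportioned over the summation, to be the main obstacle of the whole argument; this is precisely the point where one goes below the threshold $s>\frac12$ of \cite{BVAnnPDE}.

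\textbf{Step 4: asymptotics as $t\to0$.} From $|A_m|=|B_m|$ one gets $\frac{d}{dt}|A_m(t)|^2=\frac2t\,\mathrm{Im}\big(\overline{B_m}\,e^{i\Theta_m}\widetilde{\mathcal N}_m\big)$, whose right-hand side is integrable on $(0,1]$ by the non-stationary phase of Step~2; hence $\alpha_m:=\lim_{t\to0}|A_m(t)|$ exists for every $m$, with $\{\alpha_j\}\in l^{2,s}$ by the uniform bound and Fatou. Writing $A_m=|A_m|e^{i\varphi_m}$, the phase obeys $\varphi_m'(t)=\frac1t\big(|A_m(t)|^2-2\mu\big)+\frac1t(\text{oscillatory})$, and since $|A_m(\sigma)|^2=|\alpha_m|^2+O(\sigma)$ by the same argument, integrating from $t$ to $1$ yields $\varphi_m(t)=\big(|\alpha_m|^2-2\mu\big)\log t+c_m+O(t)$ for some constant $c_m$; absorbing $c_m$ into the argument of $\alpha_m$ and combining with $|A_m(t)|=|\alpha_m|+O(t)$ gives \eqref{blupth}, and exhibits the non-convergence of $A_j(t)$ alongside the convergence of $|A_j(t)|$. (The relation $\sin(\theta_j/2)=e^{-\pi|\alpha_j|^2/2}$ recalled before the theorem is then obtained by matching this profile with the self-similar building blocks of the binormal flow, and is not needed for the statement itself.)
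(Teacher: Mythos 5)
Your Steps 1, 2 and 4 (deriving the coefficient system \eqref{Ajsyst}, removing the resonant diagonal by a gauge, and extracting the $\log t$-corrected limit by non-stationary phase with the $|\lambda|^{-1}$ gain) are in the right spirit, but Step 3 contains a genuine gap, and it sits exactly where the theorem's content lies. You assert that on $[\varepsilon,1]$ the system is ``a genuine ODE in $l^{2,s}$ with locally Lipschitz right-hand side, so existence, uniqueness \dots are standard Picard theory''. For $0<s\le\frac12$ this is false: the fixed-time trilinear convolution $\{\sum_{j-k+l=m}A_j\overline{A_k}A_l\}_m$ is not a bounded map on $l^{2,s}$ (boundedness essentially requires $l^1$ control of the coefficients, i.e. $s>\frac12$; testing with $A_j\simeq N^{-1/2}\mathds{1}_{1\le j\le N}$ already shows unboundedness on $l^2$), so the right-hand side of \eqref{Ajsyst} is not even well defined, let alone Lipschitz, on $l^{2,s}$, and Picard iteration does not start. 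The oscillatory factors are of modulus one at fixed time, so the $|\lambda|^{-1}$ gain you invoke only appears after integrating in time, and the integration by parts reproduces sums of the same (in fact higher) multilinearity; it is a normal-form device for the a priori bound \eqref{est2s} and the asymptotics \eqref{blupth}, not a substitute for local solvability. You yourself flag the weighted trilinear estimate in the regime $0<s\le\frac12$ as ``the main obstacle'' and leave it unproved — but that obstacle is precisely what separates this theorem from the $s>\frac12$ case of \cite{BVAnnPDE}, so the proposal is missing its central ingredient (and with it the meaning of uniqueness, which in the statement is ``in the sense of Section \ref{ssectconstr}'', i.e. within the construction).

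The paper's own proof (quoted from \cite{BLTV}, with the construction recalled in \S\ref{ssectconstr}) resolves this differently: the pseudo-conformal transformation \eqref{Eq:Pseudoconf} turns the ansatz into a periodic solution of the modified equation \eqref{NLSv} on $[1,\infty)$, and local solvability at low regularity is obtained from Bourgain's $X^{s,b}$ theory for the periodic cubic NLS with $L^2$ data \cite{B94}, where the local existence time $\delta=\delta(M)$ depends only on the conserved mass and the periodic $L^4$ Strichartz estimate \eqref{Str} supplies the space-time smoothing that no pointwise-in-time Cauchy--Schwarz over near-resonant sets can provide. Only once this local theory is in place do the $\frac1t$ factor, the removal of the resonant part and non-stationary phase yield the uniform bound \eqref{est2s} and the $O(t)$ convergence \eqref{blupth}, much as you sketch. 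To repair your argument you would have to replace Step 3 by such an $X^{s,b}$/Strichartz-based local argument (or prove a genuine space-time substitute for the trilinear estimate you postpone); as stated, a fixed-time estimate in $l^{2,s}$ with $s\le\frac12$ cannot hold.
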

Using the Hasimoto procedure we can construct  solutions of the Schr\"odinger map and of the binormal flow equation starting from the solutions of NLS constructed in Theorem~{f{wbl}}, for $t\in (0,1] $.  This procedure is easily justified as long as the  coefficients $\{a_j\}$ decay sufficiently fast. For instance, it is sufficient that  $\{a_j\} \in l^{2,s}$ with~$s >3/2$ (see Section \ref{Sec:BinormalFlow}). This allows to define a solution map on the space of the  curves. More precisely, once we restrict the solution map between the initial time $t=1$ and the final time $t$, we have the  mapping:
\begin{equation}\label{HasimotoFlowForUs}
\Psi_{1,t} : \Omega_1^{s} \to \Omega_t^{s},  \qquad t \in (0,1],
\end{equation}
where 
$\Omega_t^{s}$ is the set of arclength parametrized curves $\gamma$ having a filament function $u_\gamma$ such that $e^{-i\frac{x^2}{4t}}u_\gamma(x)$ is $4\pi t$-periodic and belongs to $H^{s}_{per}([0,4\pi t])$. This set of curves is included in $H^{2+s}_{loc}(\mathbb R, \R^3)$ and it contains the curves 
with $4\pi t$-periodic curvature $c \in H^{s}_{per}([0,4\pi t])$ and with torsion~$\tau = \frac{x}{2t} + \eta$ where 
$\eta \in H^{s}_{per}([0,4\pi t])$ is also $[0,4\pi t]$-periodic. The notion of filament function, the set $\Omega_t^s$ together with its topology will be described in \S \ref{sectfil}. We will refer to the map \eqref{HasimotoFlowForUs} as the Hasimoto solution map. Using this vocabulary, Theorem ~{1.6} of \cite{BLTV} rewrites as follows.
\begin{theorem}[\cite{BLTV}]\label{thbfPreq}
Let $s > 3/2$.  Then the Hasimoto solution map~\eqref{HasimotoFlowForUs} is well defined.  More precisely,  there exists a unique continuous map 
$$
\Psi_{1,t} : \Omega_1^{s} \to \Omega_t^{s},  \qquad \forall t \in (0,1]
$$
such that for every  $\chi_1\in\Omega_1^s$,
$$
\chi(t):=\Psi_{1,t} (\chi_1)\in C((0,1]; H^{2+s}_{loc}(\R, \R^3)), \chi_{xx}(t)\in H^{-\frac 12^+}(\mathbb R, \R^3),
$$
and $\chi$ solves the binormal flow  equation on $(0,1]$ and attains the initial condition $\chi(1)=\chi_1$.
The curve $\chi(t)$ is $\mathcal C^{\frac 32+s}_{loc}$ and converges uniformly w.r.t. the 
arc-length parametrization,  in the limit $t \to 0$, to a polygonal line $\chi(0)$ (in particular, it generates several corner-singularities at time $t=0$):
$$
|\chi(t,x)-\chi(0,x)|\leq C\sqrt{t}, \quad \forall x\in\mathbb R,
$$
where $C$ depends on $\|e^{-i\frac{y^2}4}u_{\chi_1}(y)\|_{H^s_y([0,4\pi])}$, 
and the trajectories in time $\chi(\cdot, x)$ have H\"older regularity $\mathcal C^{\frac 12}$.

 Moreover, for any polygonal line $\chi_0$ with angles $\theta_j$ at the integer locations $x=j$ such that $\{\pi -\theta_j\}_{j\in\mathbb Z}\in l^{2,s}$, there exists a {\it unique} curve $\chi(t)\in\Omega^s_t$ that solves the binormal flow 
 equation on $(0,1]$ converging to $\chi_0$ when $t \to 0$. 
\end{theorem}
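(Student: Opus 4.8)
The plan is to transport the NLS well-posedness statement, Theorem~\ref{wbl}, to the level of curves through the Hasimoto correspondence of \S\ref{sectHas}--\S\ref{sectfil}. Given $\chi_1\in\Omega_1^s$ with $s>\frac 32$, its filament function at $t=1$ has the form \eqref{SumOfDeltaSolution} with coefficients $\{a_j\}\in l^{2,s}$, and Theorem~\ref{wbl} produces the unique NLS solution $u(t,x)$ of that form on $(0,1]$ with $u_{\chi_1}$ as datum at $t=1$, satisfying the bound \eqref{est2s} and the asymptotics \eqref{blupth}. Every assertion in the statement should then follow from the spatial regularity and the $t\to 0$ behaviour of the Hasimoto reconstruction of this $u$.

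\emph{Step 1 (reconstruction and spatial regularity).} For each fixed $t\in(0,1]$ I would build the parallel frame $(T,e_1,e_2)(t,\cdot)$ attached to $u(t,\cdot)$ by solving the linear $x$-ODE system $\partial_x T=\mathrm{Re}(\bar u\,N)$, $\partial_x N=-u\,T$ (with $N=e_1+ie_2$), prescribing the frame at $x=0$ through the companion linear $t$-ODE; the compatibility of this overdetermined system is precisely the statement that $u$ solves \eqref{CubicNLS}, so the curve $\chi$ with $\chi_x=T$ and $\chi(1)=\chi_1$ solves \eqref{BinormalVeryFirst}. Since $s>\frac 32$, $u(t,\cdot)$ is of class $C^1$ in $x$ on compact sets and the system is classical; a bootstrap exploiting that $H^\sigma_{loc}(\R)$ with $\sigma>\frac 12$ is stable under multiplication by sufficiently regular bounded functions then gives $T(t),e_i(t)\in H^{s+1}_{loc}$, hence $\chi(t)\in H^{2+s}_{loc}(\R,\R^3)\subset\mathcal C^{\frac 32^-+s}$; the continuity $t\mapsto\{A_j(t)\}$ in $l^{2,s}$ upgrades this to $\chi\in C((0,1];H^{2+s}_{loc})$ with $\chi(t)\in\Omega_t^s$. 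The global assertion $\chi_{xx}(t)=T_x(t)\in H^{-\frac 12^+}(\R)$ comes from the periodic--modulated Gaussian structure of $u(t)$, which places $u(t)\in H^{-\frac 12^+}(\R)\cap\mathcal F L^\infty(\R)$, together with the mapping properties of $N$ in $\chi_{xx}=\mathrm{Re}(\bar u\,N)$. Combining \eqref{est2s} with the continuous dependence in Theorem~\ref{wbl} gives continuity of $\Psi_{1,t}$ in the topology of $\Omega_t^s$, while uniqueness follows from that in Theorem~\ref{wbl}, using the equivariance of \eqref{BinormalVeryFirst} under rigid motions of $\R^3$ to recover the curve from $u$ and the single datum $\chi_1$.

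\emph{Step 2 (the singular limit, the polygonal line, and its prescription).} By \eqref{blupth}, as $t\to 0$ the coefficients $A_j(t)$ have moduli converging to $|\alpha_j|$ while their phases spin as $e^{i(|\alpha_j|^2-2\mu)\log t}$; consequently $u(t)$, which has the form \eqref{SumOfDeltaSolution}, concentrates at the integers, and for $x$ near a given integer $j$ the terms with $k\ne j$ are highly oscillatory and negligible for the reconstructed curve, so the local dynamics is that of the one-corner self-similar binormal flow solution with amplitude $\alpha_j$ analyzed in \cite{GRV,BVAENS} (the spinning phase being irrelevant to the corner angle). That solution converges, on the self-similar spatial scale $\sqrt t$, to a broken line with a corner of angle $\theta_j$, where $\sin\frac{\theta_j}2=e^{-\pi|\alpha_j|^2/2}$. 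Since $\{\alpha_j\}\in l^{2,s}\subset l^1$ these contributions may be superposed, and I would show that $T(t,x)$ converges, for $x\notin\Z$, to the piecewise constant tangent of a polygonal line $\chi(0)$ with vertices at the integers, and then that $\chi(t,x)=\chi(t,0)+\int_0^xT(t,y)\,dy$ converges uniformly to $\chi(0,x)$ with $|\chi(t,x)-\chi(0,x)|\le C\sqrt t$, the rate again being the self-similar scale. The $\mathcal C^{\frac 12}$ regularity of each trajectory $t\mapsto\chi(t,x)$ is of the same self-similar nature: it holds for the one-corner profiles and is propagated to the full solution using \eqref{blupth} and $\chi_t=\chi_x\times\chi_{xx}$. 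For the converse, given a polygonal line $\chi_0$ with $\{\pi-\theta_j\}\in l^{2,s}$, the relation above together with $\pi-\theta_j\simeq|\alpha_j|$ for $\theta_j$ near $\pi$ shows $\{|\alpha_j|\}\in l^{2,s}$, while the remaining phase and rotation-axis parts of the asymptotic state are read off from the directions of the segments of $\chi_0$; I would then solve the resonant $A_j$-system backwards from the singular time $t=0$ with this prescribed asymptotic state, substituting $A_j(t)=e^{i(|\alpha_j|^2-2\mu)\log t}(\alpha_j+B_j(t))$ and solving for $\{B_j\}$ with $B_j(0)=0$ by a contraction in $C([0,1];l^{2,s})$ — the proof of \eqref{blupth} run in reverse — and Hasimoto-reconstruct. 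Uniqueness holds because any $\chi(t)\in\Omega_t^s$ solving \eqref{BinormalVeryFirst} on $(0,1]$ and converging to $\chi_0$ must have filament coefficients with exactly this asymptotic state, hence coincides with the constructed curve.

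The main difficulty will be Step 2. Near $t=0$ the curve is genuinely singular — $\chi_{xx}(t)$ lies only in $H^{-\frac 12^+}(\R)$, not in $L^\infty$ — so the emergence of the corners, the uniform $O(\sqrt t)$ rate, and the $\mathcal C^{\frac 12}$ time-regularity of the trajectories cannot be read off from Sobolev bounds alone; they require matching the solution with the self-similar one-corner asymptotics simultaneously at infinitely many vertices, with the $l^{2,s}$ decay of $\{\alpha_j\}$ providing the uniform bookkeeping. A secondary technical point is rigorously justifying the overdetermined frame system of Step 1 at the threshold regularity $s>\frac 32$, where $u(t,\cdot)$ is only barely $C^1$.
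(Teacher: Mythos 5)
This theorem is not proved in the present paper at all: it is recalled verbatim from \cite{BLTV} (Theorem~1.6 there, reformulated through the Hasimoto solution map), and the paper only sketches the ingredients (the NLS construction of \S 2.2--2.3 and the frame construction of \S 3.1). Your outline does follow the same general route as \cite{BLTV} and \cite{BVAnnPDE} — transport Theorem~\ref{wbl} through the parallel-frame Hasimoto construction, then use the asymptotics \eqref{blupth} to analyse $t\to 0$ — so the architecture is right. Step~1 is essentially the construction of \S\ref{sectHas}, and for $s>3/2$ the coefficients lie in $l^1$ with weights, so the classical ODE/compatibility argument you describe goes through; the global statement $\chi_{xx}\in H^{-\frac12^+}$ indeed comes from the $l^1$ summability of $\{A_j(t)\}$ as discussed after \eqref{SumOfDeltaSolution}.

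The genuine gap is in Step~2, which you yourself flag as the difficulty but then dispose of by an argument that does not work as stated. The binormal flow is nonlinear, so "the terms with $k\ne j$ are highly oscillatory and negligible, hence the local dynamics is the one-corner self-similar solution, and since $\{\alpha_j\}\in l^1$ these contributions may be superposed" is not a proof: the emergence of the corner at $x=j$, and in particular the angle formula $\sin\frac{\theta_j}{2}=e^{-\pi|\alpha_j|^2/2}$, is obtained in \cite{BVAnnPDE,BLTV} by a detailed analysis of the frame ODEs \eqref{OdeInX}--\eqref{ODEx} with the \emph{full} solution $u$ of the form \eqref{SumOfDeltaSolution}: one must prove convergence of $T(t,x)$ for $x\notin\Z$, existence of one-sided limits of the frame at $x=j$, and compute the jump via oscillatory-integral estimates that exploit the logarithmic phase spinning in \eqref{blupth}; none of this is reducible to superposing exact one-corner profiles. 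Two further points are asserted rather than argued: the $O(\sqrt t)$ rate and the $\mathcal C^{1/2}$ time regularity do not need "self-similar propagation" — they follow from $|\partial_t\chi|=|T\wedge T_x|=|u|\le t^{-1/2}\|\{A_j(t)\}\|_{l^1}$ together with \eqref{est2s} and $l^{2,s}\subset l^1$ for $s>1/2$, and you should say so explicitly since this is the actual mechanism — and, in the inverse problem, the statement that \emph{any} solution in $\Omega^s_t$ converging to the prescribed polygonal line must have exactly the asymptotic state $\{\alpha_j\}$ (moduli \emph{and} phases, read from the segment directions) is precisely the nontrivial uniqueness content of the theorem; it requires identifying the asymptotic state from the limit curve through the frame analysis, not merely the backward contraction around a prescribed state, which only gives existence.
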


With respect to the quoted previous results for the binormal flow here we have strong solutions at a very much lower global in space regularity, $\chi_{xx}(t)\in H^{-\frac 12^+}(\mathbb R)$, that is actually at the critical level. The global regularity in Sobolev spaces of $\chi_{xx}$ reflects the behavior of the curve at infinity. Moreover, we also have a description of the limit of the curves $\chi(t)$ at the time when the NLS solution ceases to be smooth. Finally, let us note that the H\"older regularity $\mathcal C^\frac 12$ of the trajectories in time $\chi(\cdot, x)$ is the one exhibited by the self-similar solutions detailed in \cite{GRV}. \\
 
The main goal of this paper is to strongly relax the regularity/summability assumption $s > 3/2$ in Theorem \ref{thbfPreq}. To do so, we need to
 generalize the notion of solution of the binormal flow equation. Let
 $I$ be an open time interval and $s>0$.   We say that a curve $\chi \in C(I; H^{2+s}_{loc}(\R, \R^3))$ is a weak solution of the binormal flow equation on 
$(I\times \R )$ if
\begin{equation}\label{eq:WeakBinInTHMIntroduction}
- \int \chi \cdot \phi_t  \,dxdt = \int ( \chi_x \wedge \chi_{xx} ) \cdot \phi \,dxdt, \qquad  
|\chi_x(t,x)| = 1,
\end{equation}
for all test vectors $\phi$ with components in  $C^{\infty}_c (I\times \R, \R^3)$.
We will prescribe the initial condition at time $t=1 \in I$ by considering a curve $\chi_1 \in \Omega_1^s\subset H^{2+s}_{loc}(\mathbb R, \R^3)$ and we will
look for weak solutions of the binormal flow $\chi$ such that $\chi(1)=\chi_1$. 
\\  
 
We are now ready to state our first result, that extends Theorem \ref{thbfPreq} 
to lower values of~$s$.

\begin{theorem}\label{thbf} 
The  Hasimoto solution map 
$$
\Psi_{1,t} : \Omega_1^{s} \to \Omega_t^{s},  \qquad t \in (0,1]
$$
can be continuously extended for $s>0$. More precisely, for every $\chi_1\in \Omega^s_1$, $s>0$ let 
 $\chi_1^n\in \Omega^\sigma_1$, $\sigma>3/2$, $n\in\N$ be a sequence of curves converging to  $\chi_1$ in $\Omega^s_1$.
 Set 
 $$
\chi_n(t)=\Psi_{1,t} (\chi_1^n), \quad t\in (0,1].
$$
Then for every $t\in (0,1]$, $\chi_n(t)$ converges to a limit curve $\chi(t)$ in $ \Omega^s_t$. In addition, $(\chi_n(t))_{n\in\N}$ also converges to $\chi(t)$ in the space-time norm $ C((0,1]; H^{2+s}_{loc}(\R, \R^3))$. Also, $\chi(t)$ is $\mathcal C^{\frac 32+s}_{loc}$ and is a weak solution of the binormal flow  equation on~$(0,1]$ with $\chi(1)=\chi_1$.
 
 Moreover, if $s>1/6$ the curve 
 $\chi(t)$ converges to a continuous curve as $t \to 0$, uniformly w.r.t. the 
arc-length parametrization, and  the trajectories in time $\chi(\cdot, x)$ have H\"older regularity $\mathcal C^{\frac 12}$. 
\end{theorem}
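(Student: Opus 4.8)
The plan is to transfer everything to the level of the NLS equation \eqref{CubicNLS} and the filament functions, exploiting that the construction of $\Psi_{1,t}$ factors entirely through the Hasimoto correspondence. Given $\chi_1\in\Omega_1^s$ with $s>0$ and an approximating sequence $\chi_1^n\in\Omega_1^\sigma$, $\sigma>3/2$, I would first read off from the filament functions at $t=1$ the data $\{a_j^n\}$ and $\{a_j\}$ appearing in \eqref{SumOfDeltaSolution}, and note that $\chi_1^n\to\chi_1$ in $\Omega_1^s$ amounts to $\{a_j^n\}\to\{a_j\}$ in $l^{2,s}$ (the phases $e^{ij^2/4}$ relating $a_j$ to the Fourier coefficients of $e^{-ix^2/4}u_{\chi_1}$ being unitary on $l^{2,s}$) together with convergence of the finite-dimensional position and frame data of the curves. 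Then I would upgrade Theorem \ref{wbl} to a continuous-dependence statement: revisiting the fixed-point scheme of Section \ref{ssectconstr} that produces $\{A_j(t)\}$ from $\{a_j\}$, one obtains that this map is Lipschitz from $l^{2,s}$ into $C((0,1];l^{2,s})$ with a constant controlled only by the conserved $l^2$ norm, which stays bounded along the sequence since it converges. Hence $\{A_j^n(t)\}\to\{A_j(t)\}$ in $C((0,1];l^{2,s})$, where $\{A_j(t)\}$ is the solution given by Theorem \ref{wbl} for the data $\{a_j\}$; in particular $u_n(t)\to u(t)$ in $H^s_{loc}$ for every $t$, since on a period of length $4\pi t$ the $H^s_{loc}$ norm of the filament function is comparable to the $l^{2,s}$ norm of its coefficients.

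Next I would reconstruct the curves from the filament functions via Koiso's parallel-frame formulation, which removes the non-vanishing curvature assumption of Hasimoto: for each fixed $t$ the frame $(T,e_1,e_2)$ solves a linear system of ODEs in $x$ with coefficients linear in $u(t,\cdot)$, and since $s>0$ gives $u(t,\cdot)\in H^s_{loc}\subset L^2_{loc}\subset L^1_{loc}$ these Carath\'eodory ODEs are uniquely solvable, the solutions being in $H^{1+s}_{loc}$ after a short bootstrap; integrating $T$ once yields $\chi(t,\cdot)\in H^{2+s}_{loc}$, and the one-dimensional Sobolev embedding gives $\chi(t)\in\mathcal C^{\frac 32^-+s}$. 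Defining $\chi(t)$ as the curve built this way from the coefficients $\{A_j(t)\}$, continuous dependence of linear ODEs on their $L^1_{loc}$ coefficients, applied to $u_n(t)\to u(t)$ in $H^s_{loc}$ with the reference data fixed, gives $T_n(t)\to T(t)$ locally uniformly and in $H^{1+s}_{loc}$, hence $\chi_n(t)\to\chi(t)$ in $H^{2+s}_{loc}$, i.e. in $\Omega^s_t$; the convergence in $C((0,1];H^{2+s}_{loc})$ follows from the $C((0,1];l^{2,s})$ convergence of the coefficients, uniform on compact subintervals of $(0,1]$.

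That $\chi$ is a weak solution of the binormal flow with $\chi(1)=\chi_1$ is inherited by passage to the limit: each $\chi_n$ (with $\sigma>3/2$) is a genuine solution by Theorem \ref{thbfPreq}, hence satisfies \eqref{eq:WeakBinInTHMIntroduction}; on the support of a test vector $\phi$, which is a compact subset of $(0,1]\times\R$ and so bounded away from $t=0$, one has $\chi_n\to\chi$ uniformly (from the $H^{2+s}_{loc}$ convergence), $\chi_{n,x}=T_n\to\chi_x$ uniformly and $\chi_{n,xx}=T_{n,x}\to\chi_{xx}$ in $L^2_{loc}$ (these being products of the uniformly bounded frames with $u_n\to u$ in $L^2_{loc}$), so $\chi_{n,x}\wedge\chi_{n,xx}\to\chi_x\wedge\chi_{xx}$ in $L^1_{loc}$, while the constraint $|\chi_x|\equiv1$ passes to the limit.

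The substantial point, and the place where the threshold $s>1/6$ enters, is the behaviour of $\chi(t)$ as $t\to0$. Here I would use the asymptotics \eqref{blupth}, upgraded to the whole sequence of coefficients, to split $u(t)=u_{\mathrm{app}}(t)+r(t)$, where $u_{\mathrm{app}}$ is the explicit ``modulated linear'' profile obtained by replacing $A_j(t)$ with $e^{i(|\alpha_j|^2-2\mu)\log t}\alpha_j$ in \eqref{SumOfDeltaSolution} — each term being a self-similar solution of the free Schr\"odinger evolution up to a time-dependent phase — while the coefficients of $r(t)$ vanish as $t\to0$ in a suitable $l^{2,s'}$ norm with $s'\le s$. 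Along the strong solution one has $\chi_t=\chi_x\wedge\chi_{xx}=T\wedge T_x$ and hence $|\chi_t(t,x)|=|T_x(t,x)|=|u(t,x)|$, so the existence of the limit curve $\chi(0,\cdot)$ and the H\"older bound $|\chi(t,x)-\chi(t',x)|\lesssim\sqrt{t-t'}$, uniformly in $x$, reduce to controlling the time integral of $|u(\tau,x)|$ near $\tau=0$ at the rate $\tau^{-1/2}$, since $\int_{t'}^{t}\tau^{-1/2}\,d\tau\le 2\sqrt{t-t'}$. A crude $L^\infty_x$ bound on $u(\tau,\cdot)$ would force $\{\alpha_j\}\in l^1$, i.e. $s>1/2$; getting down to $s>1/6$ must exploit the oscillatory structure of \eqref{SumOfDeltaSolution} — after the substitution $\theta=x/(2\tau)$ the relevant quantity is a weighted average, along curves sweeping many periods, of the $\tau$-dependent trigonometric sum $\sum_j A_j(\tau)e^{ij^2/(4\tau)}e^{-i\theta j}$ — combined with the $l^{2,s}$ decay of $\{\alpha_j\}$, the cubic nonlinearity being what converts the ``$1/2$'' of the self-similar scaling into the ``$1/6$'' (heuristically $1/6=\tfrac13\cdot\tfrac12$) through a trilinear estimate. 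I expect making this trade-off between oscillation and decay quantitative, uniformly in $x$ and down to $s>1/6$, to be the main obstacle of the proof; the $\mathcal C^{1/2}$ regularity of the trajectories $t\mapsto\chi(t,x)$ is then immediate from the same $\tau^{-1/2}$ bound.
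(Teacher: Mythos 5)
There are two genuine gaps. The first is in your reconstruction/convergence step: you treat $t$ as a parameter, solve the frame ODE in $x$ at each fixed time ``with the reference data fixed'' at a base point, and invoke continuous dependence on the $L^1_{loc}$ coefficient $u_n(t)\to u(t)$ to conclude $T_n(t)\to T(t)$. But the curves $\chi_n(t)=\Psi_{1,t}(\chi_1^n)$ do \emph{not} carry a fixed frame at the base point: their frames at $(t,x_0)$ evolve in time according to \eqref{ODEx0}, whose coefficients involve $u^n_x(t,x_0)$ and $|u^n(t,x_0)|^2$, quantities that have no pointwise meaning at the limiting regularity $s>0$. Hence $\chi_n(t)$ differs from the curve you build at fixed $t$ by a time- and $n$-dependent rigid motion, and nothing in your argument shows these rigid motions converge (locally uniformly in $t$); a family of curves obtained by freezing the frame at $x_0$ for every $t$ is in general not a binormal flow solution at all. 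Without this control you cannot conclude $\chi_n(t)\to\chi(t)$ in $\Omega^s_t$ or in $C((0,1];H^{2+s}_{loc})$, and your third paragraph (passing to the limit in the weak formulation) presupposes exactly that convergence. This is the core difficulty the paper's Section 3 addresses: using the $L^\infty_t L^2_{loc}$ convergence \eqref{AssWeak1} (obtained here from \eqref{contv} via the pseudo-conformal transform), the defect of two frames is estimated in \emph{both} variables, exploiting the antisymmetry of $\Gamma$ and of $\Omega$ in a weak-in-time formulation, which gives Cauchy-ness of the frames in $C(J;H^1_{loc})$ (Theorem \ref{ThmWeak1}); the curve is then defined as in \eqref{Def:CurveN}--\eqref{Def:Curve} with an almost-every choice of base point, and a separate argument shows base-point changes and the residual translations converge (Theorem \ref{MainThhBinormal}). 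You need an argument of this type, or an equivalent uniform-in-$n$, locally-uniform-in-$t$ control of the frame at the base point.

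The second gap is the $s>1/6$ statement, which you explicitly leave open: you correctly reduce to showing $\int_{t_1}^{t_2}|u(\tau,x)|\,d\tau\lesssim\sqrt{t_2}$ uniformly in $x$, but you do not prove it, and the mechanism you conjecture (a trilinear estimate exploiting the cubic nonlinearity, ``$1/6=\tfrac13\cdot\tfrac12$'', after splitting $u$ via the asymptotics \eqref{blupth}) is not what is needed; indeed \eqref{blupth} plays no role here. The actual bound follows from soft ingredients already at hand: $|\partial_t\chi^n|\le|u^n|$, the pseudo-conformal change of variables giving $\int_{1/t_2}^{1/t_1}t^{-3/2}|v^n(t,tx)|\,dt$, the one-dimensional Sobolev embedding $W^{\frac16^+,6}(\T)\subset L^\infty(\T)$, the periodic $L^6_{t,x}$ Strichartz estimate \eqref{Str2} on unit time intervals (whose $\varepsilon$-derivative loss is precisely where $s>1/6$ enters), and the globally uniform bound \eqref{est2s}; summing $k^{-3/2}$ over unit intervals then yields $\sqrt{t_2}$, and the $\mathcal C^{1/2}$ regularity of trajectories follows from the same computation. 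Your dichotomy ``a crude $L^\infty_x$ bound would force $s>1/2$'' misses that the point is to bound $L^\infty_x$ through a space-time (Strichartz) norm rather than a fixed-time norm. A minor further inaccuracy: the Lipschitz dependence you claim for the coefficient map, with a constant depending only on the conserved $\ell^2$ norm uniformly over all of $(0,1]$, is stronger than what \eqref{contv} provides (constants depend on the compact time interval); only locally uniform control in time is available, and fortunately only that is needed for convergence in $C((0,1];H^{2+s}_{loc})$.
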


In this result the limit curves $\chi(t)$ have essentially the same local regularity as the ones in \cite{ChErTz}, but we allow more general behavior at infinity. In addition,  for $s>\frac 16$ we have a qualitative description of the dynamics of the curve. 
It is worth noticing that when $s>3/2$ the binormal flow solutions in Theorem \ref{thbf} are the ones in Theorem \ref{thbfPreq}, thus have a limit curve as $t \to 0$ identified to be a polygonal line. Moreover, they enter the framework of \cite{BVAnnPDE} where they were also proved to have converging tangent vectors as $t \to 0$. An interesting question, that will not be further investigated in this paper, is whether these properties remain true for lower values of $s$. \\

\subsection{Probabilistic results} 
Our second  goal is to provide a probabilistic counterpart of Theorem \ref{thbf} for low regularity random curves. 
An interesting aspect of this probabilistic approach is that it will allow us to relax the regularity assumption $s > 1/6$, 
needed for the existence of a continuous limit curve at $t=0$, essentially down to $s>0$.  
The idea is to randomize the coefficients $a_j = A_j (t)_{\vert_{t=1}}$ of the ansatz \eqref{SumOfDeltaSolution} and exploit decorrelation properties. 
Such an idea goes back to the work by  Paley-Zygmund   \cite{PZ} on Fourier series in 1930. 
Much later a similar idea was used in the context of PDE's in \cite{B94_pak,Bo96,BT1,BT2, CO} and many subsequent works. 
In our context, the randomization is described as follows. Let~$\{g_j^{\omega}\}_{j\in\Z}$ be a sequence of independent standard complex Gaussians (see \eqref{DefComplexGaussians} below) on a probability space $(\Omega, \mathcal A,p)$.   Set 
\begin{equation}\label{CoeffRandomized}
a_j^{\omega} := \frac{g^{\omega}_j}{(1 + |j|^{2s +1} )^{\frac12}}\,\, e^{i \frac{j^2}{4}}.
\end{equation}
Given any $s' < s$, the sequence of random variables $\{ a_j^{\omega} \}_{j \in \Z}$ induces a measure 
on the space of the complex valued sequences belonging to $l^{2, s'}$.
Moreover, one has $\omega$-almost surely $\| \{ a_j^{\omega} \} \|_{l^{2, s}} = \infty$ (see e.g.  \cite[Lemma~B1]{BT1}). In this sense, we are working  $\omega$-almost surely at the $l^{2, s-}$ level of summability/regularity. 
We define our randomized initial datum for NLS as
\begin{equation}\label{SumOfDeltaSolutiondfnjskdg}
u^{\omega}(1,x)= e^{i \frac{x^2}{4}} \sum_{j\in\mathbb Z} a_j^{\omega}  e^{-i \frac{xj}{2}} ,
\end{equation}
and the related $4 \pi$-periodic random variable
\begin{equation}\label{dfkslkgjdksl}
Z(\omega, x) :=  \sum_{j\in\mathbb Z} a_j^{\omega}  e^{-i \frac{xj}{2}} .
\end{equation}
Since $Z(\omega, x)$ is $4\pi$-periodic, almost surely in $H^{s'}(0,4\pi)$, if $s'>0$ we can use it as a filament function, see \S \ref{sectfil}, and define an event $E_1^{s'}\subset \Omega$ such that  $p(E_1^{s'})=0$ and for every $\omega\in \Omega\backslash E_1^{s'}$  there is a curve  $\chi^{\omega}_1\in \Omega_1^{s'}$ with  filament function $e^{i\frac{x^2}4}Z(\omega, x)$. Therefore we can apply Theorem~\ref{thbf} and define a family of curves 
$$
\chi^{\omega}(t)\in \Omega^{s'}_t , \quad t\in (0,1],\,\, \omega \in E_1^{s'},
$$
which solves  the binormal flow  equation on $(0,1]$ with $\chi^{\omega}(1)=\chi^{\omega}_1$.
For $s'<1/6$, Theorem~\ref{thbf} does not give any information about the behavior of $\chi^{\omega}(t)$ in the limit $t\rightarrow 0$. 
Our  second main result addresses this issue as shown by the next statement. 
 \begin{theorem}\label{thbf2}
 Let $s>s'>0$ and let $\chi^{\omega}(t)$ be defined as above. 
 Then there is an event $E_2^{s'}$ such that  $E_1^{s'}\subset E_2^{s'} $ and $p(E_2^{s'})=0$ such that for every $\omega\in \Omega\backslash E_2^{s'}$ 
 the curve $\chi^{\omega}(t)$ belongs to $C^{2+s'}(\R)$ and  converges to a continuous curve as $t \to 0$, uniformly w.r.t. the  arc-length parametrization. Moreover the trajectories in time  $\chi^\omega(\cdot, x)$ have H\"older regularity $\mathcal C^{\frac 12-\varepsilon}$ for all $x\in\mathbb R$ and $\varepsilon>0$.
\end{theorem}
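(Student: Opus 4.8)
The plan is to keep the architecture of Theorem~\ref{thbf} and replace the \emph{deterministic} summation of the oscillatory series that control the curve by \emph{probabilistic} ones (of Khintchine/Salem--Zygmund type); this is what lets us cross the deterministic threshold $s=1/6$ and reach $s'>0$ almost surely. First I would record what Theorem~\ref{wbl} already gives in this setting: since $\omega$-almost surely $\{a_j^\omega\}\in l^{2,s'}$ with $s'>0$, that theorem (applied with exponent $s'$) produces the profile $\{A_j^\omega(t)\}\in C((0,1];l^{2,s'})$, the uniform bound \eqref{est2s}, and the asymptotics \eqref{blupth}, namely $|A_j^\omega(t)-e^{i(|\alpha_j^\omega|^2-2\mu^\omega)\log t}\alpha_j^\omega|\le C(\|\{a_j^\omega\}\|_{l^{2,s'}})\,t$ with $\{\alpha_j^\omega\}\in l^{2,s'}$ and $C(\|\{a_j^\omega\}\|_{l^{2,s'}})<\infty$ almost surely; the regularity $\chi^\omega(t)\in C^{2+s'}(\R)$ is then the one of Theorem~\ref{thbf}. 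Going through the Hasimoto/parallel-frame description of $\S\ref{sectfil}$, the assertions ``$\chi^\omega(t,x)$ converges uniformly in $x$ as $t\to 0$'' and ``$\chi^\omega(\cdot,x)\in\mathcal C^{1/2-\epsilon}$'' get recast as: certain oscillatory series $\sum_j A_j^\omega(t)\,\Phi_j(t,x)$, with $\Phi_j$ built from the explicit Gaussian profiles $e^{i(x-j)^2/4t}/\sqrt t$ and their primitives in $x$, are, on a full-measure set, bounded and equicontinuous in $x$ on compacts \emph{uniformly in} $t\in(0,1]$, and Cauchy as $t\to 0$ with a quantitative rate. Deterministically such sums need roughly $l^1$ control on the coefficients, while a random coefficient sequence only needs $l^2$ control, which is exactly the $l^{2,s'-}$ regularity, $s'>0$, of our data.

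The core of the argument is that, although $\{A_j^\omega(t)\}$ and the limiting sequence $\{\alpha_j^\omega\}$ are \emph{nonlinear} images of the Gaussian sequence $\{a_j^\omega\}$, hence not themselves of the form (deterministic)$\times g_j^\omega$, the law of these sequences stays absolutely continuous with respect to the initial Gaussian measure $\rho$ (the law of $\{a_j^\omega\}$) along the whole flow $t\in(0,1]$, including as $t\to 0$. This is the quasi-invariance/scattering statement announced in the abstract: the flow map $\{a_j\}\mapsto\{A_j(t)\}$ transports $\rho$ to a measure $\rho_t\ll\rho$ whose density is bounded in $L^p(d\rho)$ uniformly for $t\in(0,1]$, and $\rho_t$ converges as $t\to 0$ to an explicit limit measure $\rho_0\ll\rho$ (the unimodular log-phases in \eqref{blupth} are harmless, since a diffeomorphism of $\mathbb C\setminus\{0\}$ fixing the moduli preserves absolute continuity). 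I would prove this by writing the Liouville/transport equation for the density along the reduced system that governs $\{A_j(t)\}$ and running a Gronwall estimate, using the conservation of $\mu=\sum_j|A_j(t)|^2$ and the $l^{2,s'}$ a priori bound \eqref{est2s}; identifying $\rho_0$ then uses \eqref{blupth}.

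Granting this, the scheme closes as follows. At the renormalized ``time zero'', the filament function of $\chi^\omega(t)$ for small $t$ is, up to an $O(t)$ error, $e^{i\pi/4}\sum_j \alpha_j^\omega e^{i(|\alpha_j^\omega|^2-2\mu^\omega)\log t}\, e^{i(x-j)^2/4t}/\sqrt t$, i.e.\ an object of the same shape as the datum at $t=1$ but with coefficients whose law is $\ll\rho$ uniformly in $t$. For the genuinely \emph{linear} random series $\sum_j g_j^\omega c_j\,\Phi_j(t,x)$ one has, directly by Khintchine and Salem--Zygmund, the $l^2\!\to L^\infty_x$ (with a $\sqrt{\log}$ loss) and the Hölder-in-$t$ bounds we need; transferring them through the $L^p(d\rho)$ density bound gives the same bounds, with $\rho$-probability $\ge 1-\varepsilon$, for $\{A_j^\omega(t)\}$ at every $t\in(0,1]$ and for $\{\alpha_j^\omega\}$. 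A Borel--Cantelli over a sequence $t_n\downarrow 0$, combined with the $t$-uniform equicontinuity in $x$ from the first step, then produces a single null set (which we adjoin to $E_1^{s'}$ to define $E_2^{s'}$) off which $\sum_j A_j^\omega(t)\Phi_j(t,x)$ is bounded, equicontinuous in $x$, and Cauchy as $t\to 0$; this yields the uniform-in-$x$ convergence of $\chi^\omega(t)$ to a continuous curve and, applying the same estimates to the increments $\chi^\omega(t_1,x)-\chi^\omega(t_2,x)$, the $\mathcal C^{1/2-\epsilon}$ regularity of the trajectories.

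I expect the main obstacle to be precisely the quasi-invariance with quantitative, $t$-uniform control of the densities \emph{down to $t=0$}: on the line there is no invariant Gibbs measure to rely on, the natural Gaussian measure is only quasi-invariant, and one must both propagate the density bound across the focusing time $t\to 0$ and show that the renormalized $\{A_j\}$-dynamics is regular enough — through \eqref{est2s} and \eqref{blupth} — for the transport (Gronwall) estimate to close; identifying the limit measure $\rho_0$ explicitly is an extra difficulty specific to this unbounded setting. Once this is in place, the reduction of the first step and the linear random-series bounds of the last step are comparatively standard.
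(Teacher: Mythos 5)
You have correctly identified the engine of the argument -- quasi-invariance of the Gaussian-type measure with \emph{time-uniform}, quantitative control of the transported densities, used to transfer Gaussian bounds to the solution at later times -- and this is indeed what the paper does (Theorem \ref{MainThm3}, estimate \eqref{QuasiInvFin}). But your execution plan has two genuine gaps. First, and most importantly, your endgame does not close: the quantities that must be controlled are $\int_{t_1}^{t_2}|u^{\omega}(\tau,x)|\,d\tau$, i.e.\ (after the pseudo-conformal change of variables) $\int \tau^{-3/2}\|v^{\omega}(\tau)\|_{L^\infty_x}\,d\tau$ over a \emph{continuum} of times, and the same is true for the $\mathcal C^{1/2-\epsilon}$ bound on trajectories, which requires all pairs $t_1<t_2$. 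Transferring a Khintchine/Salem--Zygmund bound through the density gives a tail estimate only at each \emph{fixed} time; a Borel--Cantelli argument over a sequence $t_n\downarrow 0$ gives nothing at intermediate times, and at regularity $l^{2,s'}$ with $s'$ small there is no deterministic ``$t$-uniform equicontinuity'' to interpolate with (that is precisely the obstruction that makes the deterministic threshold $s>1/6$). The paper resolves this by a local-in-time structure that your proposal lacks: a partition of $[1,\infty)$ into short intervals, on each of which the Duhamel part gains almost $1/2$ derivative for data outside an exceptional set (Proposition \ref{1/2SmoothingNLSPert}, a multilinear $X^{s,b}$ estimate with random data), while the linear part is controlled for \emph{all} times by a large-deviation bound in H\"older norms (Lemma \ref{LemmaDelta}); the quantitative quasi-invariance \eqref{QuasiInvFin} is then used to pull these exceptional sets back to time $1$ at every subinterval endpoint, with summable measures, yielding $\|v^{\omega}(\tau)\|_{C^{s'}}\lesssim C(\omega)+\tau^{\varepsilon}$ for all $\tau$ (Theorem \ref{QuantControlCsBis}, along truncation subsequences, which is a further technical layer you ignore). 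Without this interval-by-interval smoothing your measure-transfer step controls too little to bound the time integral or the trajectory increments.

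Two further points. The $C^{2+s'}(\R)$ regularity of $\chi^{\omega}(t)$ does \emph{not} follow from Theorem \ref{thbf}, which only gives $H^{2+s'}_{loc}$, i.e.\ $\mathcal C^{\frac32^-+s'}$; in the paper it is a probabilistic gain, obtained because the filament function is almost surely $C^{s'}$ in $x$ (linear evolution of the randomized datum plus a smoother Duhamel term), which allows the reinforced convergence \eqref{AssWeak1Reinf} and Proposition \ref{CorWeak1}. Finally, your route to the quasi-invariance itself -- a Liouville/transport equation for the density plus Gronwall, using \eqref{est2s} and \eqref{blupth} -- would at best give bounds growing in time; the time-uniformity down to $t=0$ (equivalently $\tau\to\infty$ for \eqref{NLSvIntro}) comes in the paper from a normal-form/integration-by-parts smoothing estimate for the modified energy $\||D|^{s+1/2}v\|_{L^2}^2$ (Lemma \ref{ExpIntBound}), whose uniformity rests on the non-resonant structure and the $1/\tau$ decay of the nonlinearity, combined with finite-dimensional Liouville invariance and exponential integrability; note also that the paper never needs the asymptotics \eqref{blupth} or the limit coefficients $\{\alpha_j\}$ in this proof. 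You correctly flagged the uniform quasi-invariance as the main obstacle, but the method you sketch for it, and the way you propose to exploit it afterwards, would both need to be replaced by the local smoothing plus measure-propagation scheme described above.
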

One may show that the family of curves 
$
\{\chi_1^\omega,\,\omega\in  \Omega\backslash E_2^{s'}\} $ concerned by the statement forms a dense set in $\Omega_1^{s'}$ (cf. \cite{BT2,Tz24} where similar analysis is performed).  
One may also show that for $s<1/6$ the curves at time $1$ satisfy
$$
\{\chi_1^\omega,\,\omega\in  \Omega\backslash E_2^{s'}\}\cap \Omega_1^{\frac 1 6}=\emptyset,
$$
i.e. the result of Theorem  \ref{thbf2} cannot be deduced from Theorem~\ref{thbf}.
\\

The proof of Theorem \ref{thbf2} uses methods developed in the field of probabilistic well-posedness of dispersive PDE's. 
It is worth noticing that  the additional $\mathcal C^{2+s'}$ regularity of the curves and the  $\mathcal C^{\frac 12-\varepsilon}$ regularity of the trajectories 
result from probabilistic considerations.
\\

It is also worth mentioning that different probabilistic approaches to the study of vortex filaments and 
Schr\"odinger map 
can be found in \cite{FG1, FG2} and \cite{GH}, respectively.  
\\ 

We now turn to an intermediate result, used in the proof of Theorem \ref{thbf2}, that we consider of independent interest. This will also give a more precise  idea of which probabilistic techniques are employed in the proof of Theorem \ref{thbf2}.
The starting point for the analysis of the behavior in the limit $t \to 0$
is to notice that 
the pseudo-conformal transformation \eqref{Eq:Pseudoconf} maps a solution
 $u(t)$ of the cubic NLS \eqref{CubicNLS} on $0<t\leq 1$ into a periodic  
 solution $v(t)$ of the following modified 
 cubic NLS on $1\leq \tau <\infty$:
\begin{equation}\label{NLSvIntro}
iv_{\tau}+v_{xx}+\frac {1}{\tau}|v|^2v=0, \qquad x \in  \T. 
\end{equation} 
We shall present only the NLS results for the focusing case (which is the one linked to the binormal flow derived from fluids), but the defocusing case can be considered also with simpler proofs. 
We note that the modified equation \eqref{NLSvIntro} formally converges to the free equation in the asymptotic limit $\tau \to \infty$. Thus, even if the solutions are periodic in space, due to the decaying coefficient $\frac 1\tau$, one may expect to recover some scattering type results in the limit $\tau \to \infty$.
This is the reason why this model is particularly interesting. Indeed, in order to prove Theorem \ref{thbf2}, we will
need to construct a quasi-invariant measure for the modified equation \eqref{NLSvIntro}. This is done in Theorem \ref{MainThm3} below. The interesting feature is that in \eqref{fndjskjnfgjdskjdng} below we are able to identify the asymptotic behavior  
of the transported by the flow measure (as $\tau \to \infty$). 
As recent work on quasi-invariant Gaussian measures for Hamiltonian PDE's on bounded domains revealed, 
it is usually not possible to identify a limit measure when quasi-invariance holds.  
A notable exception is \cite{BT} which also studies quasi-invariant measures after a conformal transformation.  In \cite{BT} one can obtain the limit of the corresponding Radon-Nikodym derivatives because of the closeness to a Gibbs measure. In our context the limit is obtained because of the closeness to the linear flow which leaves invariant the measure $\gamma_s$ which will be defined in the next paragraph.
\\ 

We therefore  introduce the Gaussian measure~$\gamma_s$ induced on $H^{s'}(\T)$, $s' < s$, by the map
\begin{equation}\label{TMIATSIntro} 
\omega \longmapsto   \sum_{k \in \mathbb{Z}} 
\frac{g_k^{\omega}  }{(1 + |k|^{2s+1})^{\frac12} } \, e^{i x k}.
\end{equation}
For details on this construction see the beginning of \S 5. 
The random series \eqref{TMIATSIntro} is $\omega$-almost surely in $C^{s'}(\T)$ for all $s' < s$
while its $H^{s}(\T)$ norm is $\omega$-almost surely infinite (see e.g. \cite{BT1,BT2}).
Then we define, for $M>0$ fixed,
\begin{equation}\label{Def:Rho(1)Intro}
d \rho_s(v) = \mathds{1}_{\left\{ \| v \|^2_{L^{2}(\mathbb{T})} \leq M \right\}} (v)d \gamma_s(v),
\end{equation}
namely we have introduced a rigid cut-off on the size of the $L^2(\T)$ norm of $v$. It is also worthy mentioning that 
the
$L^2(\T)$ norm is conserved under the solution map $\Phi_{1,\tau}$ of equation \eqref{NLSvIntro} 
(with this notation we want to stress that this is the 
solution map between the initial time $\tau=1$ and the final time $\tau >1$).  
Our goal is to study the evolution of~$\rho_s$ under the map $\Phi_{1,\tau}$ and in particular to show its 
quasi-invariance. By this we mean that zero measure sets remains of zero measure under the evolution, namely 
\begin{equation}\label{QuasiInvIntro}
\rho_s(A) =0 \Rightarrow \rho_s(\Phi_{1, \tau} (A)) =0, \qquad \forall \tau >1,
\end{equation}
for all Borel sets $A$ in the $H^{s'}(\T)$ topology, where $s' < s$. We will prove  the following more precise statement. 
\begin{theorem}\label{MainThm3}
Let $s \in (0,1)$. The measure  $\rho_s$ is quasi-invariant under the map $\Phi_{1,\tau}$, in the sense of \eqref{QuasiInvIntro}. 
In addition, for all $\kappa>0$ there exists a constant $C_\kappa$ depending only on $\kappa$ such that
\begin{equation}\label{QuasiInvFin}
\rho_s \left( \Phi_{1,\tau} (A) \right) \leq C_{\kappa} \rho_s  ( A )^{1 - \kappa},
\end{equation}
for all $\tau \geq 1$ and all Borel sets $A \subseteq H^{s'}(\T)$, $s' < s$, namely Borel sets with respect to the $H^{s'}$ topology. 
The transport by $ \Phi_{1,\tau} $ of  $\rho_s$  is absolutely continuous with respect to $\rho_s$ with a Radon-Nikodym derivative  
\begin{equation}\label{QuasiInvFinDensity}
f(\tau, v) 
 := e^{ 2  {\rm Im\,}  \int_1^{\tau} \int_{\T} |\Phi_{1,\lambda}(v)|^2 \Phi_{1,\lambda}(v)  |D|^{2s+1} 
 \overline{\Phi_{1,\lambda}(v)} \, \frac{d \lambda}{\lambda} dx},
 \end{equation}
 where $|D|^{2s+1}$ denotes here the Fourier multiplier\footnote{For a notational simplicity, in this paper we diverge from the usual definition of 
 $|D|$ denoting the  Fourier multiplier operator of symbol $|k| $. } operator of symbol $1 + |k|^{2s+1} $.
Namely
\begin{equation}\label{kklskjkdgnjsd}
\rho_s \left( \Phi_{1,\tau} (A) \right) = \int_A f(\tau,v) \, d \rho_s(v), 
\end{equation}
for all Borel sets $A \subseteq H^{s'}(\T)$, $s' < s$. Moreover, the densities have a limit as $\tau \to \infty$, namely 
 \begin{equation}\label{fndjskjnfgjdskjdng}
\lim_{\tau \to \infty} f(\tau, v)  =
e^{ 2  {\rm Im\,}  \int_1^\infty \int_{\T}  |\Phi_{1,\lambda}(v)|^2 \Phi_{1,\lambda}(v)  |D|^{2s+1} 
 \overline{\Phi_{1,\lambda}(v)} \, \frac{d \lambda}{\lambda} dx}.
\end{equation}
\end{theorem}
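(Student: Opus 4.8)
The plan is to follow the now-standard scheme for quasi-invariance via weighted energy estimates, but organized so that the decaying coefficient $\frac1\tau$ in \eqref{NLSvIntro} yields the convergence of the densities as $\tau\to\infty$. First I would introduce the truncated weighted energy functional
\[
E_s(v) := \tfrac12\,\big\| |D|^{s+\frac12} v \big\|_{L^2(\T)}^2
= \tfrac12\sum_{k\in\Z} (1+|k|^{2s+1}) |\widehat v(k)|^2,
\]
which is (modulo the harmless $\frac12\|v\|_{L^2}^2$ piece already controlled by the cutoff $\mathds 1_{\{\|v\|_{L^2}^2\le M\}}$) the exponent appearing in the gaussian $\gamma_s$. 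Differentiating $E_s$ along the flow $v(\tau)=\Phi_{1,\tau}(v)$ of \eqref{NLSvIntro}, the linear Schr\"odinger part drops out and what remains is exactly
\[
\frac{d}{d\tau} E_s\big(\Phi_{1,\tau}(v)\big)
= \frac{1}{\tau}\,\mathrm{Im}\!\int_{\T} |\Phi_{1,\tau}(v)|^2\,\Phi_{1,\tau}(v)\;|D|^{2s+1}\overline{\Phi_{1,\tau}(v)}\;dx
=: \frac{1}{\tau}\,\mathcal R_s\big(\Phi_{1,\tau}(v)\big).
\]
This is the object already written in \eqref{QuasiInvFinDensity}: the Radon–Nikodym derivative $f(\tau,v)$ is precisely $\exp\!\big(2\int_1^\tau \frac{1}{\lambda}\mathcal R_s(\Phi_{1,\lambda}(v))\,d\lambda\big)$, so the change-of-variables identity \eqref{kklskjkdgnjsd} will follow once we justify it by the usual Liouville/absolute-continuity argument for the truncated gaussian measure (first on frequency-truncated approximations $\Phi^{(N)}_{1,\tau}$, then passing to the limit).

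The heart of the matter is therefore a deterministic-in-$v$, but $\gamma_s$-measurable, bound on the remainder $\mathcal R_s$. I would prove that for every $\varepsilon>0$ there is a set $\Sigma_\varepsilon\subseteq H^{s'}(\T)$ with $\gamma_s(\Sigma_\varepsilon^c)<\varepsilon$ (more precisely with Gaussian-tail-type measure) such that for $v\in\Sigma_\varepsilon$ one has
\[
\big| \mathcal R_s\big(\Phi_{1,\tau}(v)\big) \big| \;\le\; C\big(\|v\|_{L^2},\varepsilon\big)\,\big(1+ E_s(\Phi_{1,\tau}(v))\big)^{1-\theta}
\qquad\text{for some } \theta>0,
\]
uniformly in $\tau\ge1$. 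The gain of a negative power $\theta$ of the top energy is what makes the scheme work: it is obtained by a bilinear/multilinear smoothing estimate for the resonant part of $\mathcal R_s$ together with an integration by parts in $x$ (or a normal-form transformation) that moves $|D|^{2s+1}$ off the least regular factor. For $s\in(0,1)$ the derivative count $2s+1<3$ is favorable: Sobolev embedding $H^{s'+}(\T)\hookrightarrow L^\infty$ for $s'>1/2$ is not available, so instead one uses $L^4$–$L^4$–$L^2$–$L^2$ Hölder splitting, Bernstein on dyadic blocks, and the $\gamma_s$-almost-sure bound $\|\Phi_{1,\tau}(v)\|_{H^{s'}}\le \|v\|_{H^{s'}}+ (\text{controlled by }E_s)$ coming from the equation. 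Feeding this remainder bound into the ODE for $E_s\circ\Phi_{1,\tau}$ and using a Gronwall-type argument gives $E_s(\Phi_{1,\tau}(v))\le G(\tau,v)$ with $\int_1^\infty \frac{d\tau}{\tau}\big(1+G(\tau,v)\big)^{1-\theta}<\infty$ — crucially, the factor $\frac1\tau$ makes the improper $\tau$-integral converge even though $G$ may grow polynomially in $\log\tau$. From this, the exponent in $f(\tau,v)$ converges absolutely, which is exactly \eqref{fndjskjnfgjdskjdng}; a Fatou / dominated convergence argument then upgrades the pointwise limit to the statement. The quasi-invariance \eqref{QuasiInvIntro} and the quantitative bound \eqref{QuasiInvFin} follow from \eqref{kklskjkdgnjsd} by the standard interpolation trick: $f(\tau,\cdot)\in L^p(d\rho_s)$ for every $p<\infty$ with $\|f(\tau,\cdot)\|_{L^p(\rho_s)}\le C_p$ uniformly in $\tau$ (again using the uniform-in-$\tau$ Gaussian-tail estimate on $E_s\circ\Phi_{1,\tau}$, here is where the $\frac1\tau$ is essential to get uniformity), whence Hölder with exponent $\frac1{1-\kappa}$ gives $\rho_s(\Phi_{1,\tau}(A))=\int_A f\,d\rho_s\le \|f\|_{L^{1/\kappa'}}\rho_s(A)^{1-\kappa}$.

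The main obstacle I expect is the remainder estimate with a genuine power gain $\theta>0$ that is at the same time uniform in $\tau\ge1$. Two difficulties compound here: (i) on the torus there is no global smoothing, so the gain must come purely from frequency interactions (resonant versus non-resonant), which for the cubic nonlinearity forces either a careful normal-form/gauge transformation of \eqref{NLSvIntro} — complicated by the time-dependent coefficient $\frac1\tau$, whose $\tau$-derivative produces an extra, fortunately better-behaved, term — or a direct multilinear estimate à la Bourgain–Gigliola–Tzvetkov; and (ii) one must ensure the implied constants do not degrade as $\tau\to\infty$, which is plausible precisely because the effective coupling $\frac1\tau\to0$ but requires book-keeping the $E_s$-dependence of all intermediate Sobolev norms. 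A secondary technical point is the justification of the change-of-variables formula \eqref{kklskjkdgnjsd} itself: one proves it first for the truncated flows $\Phi^{(N)}_{1,\tau}$ (finite-dimensional Hamiltonian, exact Liouville theorem) and then passes $N\to\infty$, which needs the approximation $\Phi^{(N)}_{1,\tau}(v)\to\Phi_{1,\tau}(v)$ in $H^{s'}$ locally uniformly in $\tau$ together with $L^p(\rho_s)$-convergence of the densities, both of which follow from the same energy estimates. Once these are in place, identifying the limit \eqref{fndjskjnfgjdskjdng} is essentially free — and this is the new feature the authors emphasize, owing entirely to the convergence of $\int_1^\infty\frac{d\lambda}{\lambda}\,\mathcal R_s(\Phi_{1,\lambda}(v))$.
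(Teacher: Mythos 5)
Your overall skeleton (truncate, Liouville theorem for the finite-dimensional flow, change of variables giving the density as the exponential of the time-integrated remainder, H\"older for \eqref{QuasiInvFin}, dominated convergence for \eqref{fndjskjnfgjdskjdng}) is the same as the paper's, but the analytic core of your argument has a genuine gap, in fact two. First, your key estimate and the Gronwall step are phrased in terms of the top energy $E_s(\Phi_{1,\tau}(v))=\tfrac12\||D|^{s+1/2}\Phi_{1,\tau}(v)\|_{L^2}^2$, which is $\gamma_s$-almost surely \emph{infinite}: the measure lives strictly below $H^{s}$, so a bound of the form $|\mathcal R_s|\le C(1+E_s)^{1-\theta}$, a Gronwall inequality for $E_s\circ\Phi_{1,\tau}$, or a ``Gaussian-tail estimate on $E_s\circ\Phi_{1,\tau}$'' are vacuous on the support of $\rho_s$ and cannot produce the uniform-in-$\tau$ $L^p(\rho_s)$ bounds on the density that your H\"older step needs. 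What is actually required is a bound of the time-integrated energy increment by a \emph{lower-regularity} norm of the data that is a.s.\ finite and exponentially integrable; this is exactly Lemma \ref{ExpIntBound}, whose right-hand side is $C(M)\||D|^{s-\varepsilon}P_{\le N}v\|_{L^2}^{3}$, and whose exponential integrability (Lemma 5.1 of \cite{FoSe}) is what drives both \eqref{QuasiInvFin} and the limit \eqref{fndjskjnfgjdskjdng}.

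Second, even granting your remainder bound, your convergence claim is false: if $G(\tau,v)\gtrsim 1$ (as it must be, since you allow poly-logarithmic growth), then $\int_1^\infty \frac{d\tau}{\tau}\,(1+G(\tau,v))^{1-\theta}=\infty$ -- the factor $\frac1\tau$ alone never makes this integral converge, so your proof of the finiteness of the exponent and of the limit density collapses. The paper's mechanism is different and essential: after symmetrizing the multiplier to $|k_1|^{2s+1}-|k_2|^{2s+1}+|k_3|^{2s+1}-|k_4|^{2s+1}$ and dividing by the (non-vanishing) resonance function, one integrates by parts \emph{in time}, which converts $\frac{d\lambda}{\lambda}$ into $\frac{d\lambda}{\lambda^{2}}$ plus bounded boundary terms; combined with the uniform-in-time control \eqref{est2s} of the solution's weighted-$\ell^2$ norms, this yields an estimate on the increment that is uniform over $t>1$ (Lemma \ref{ExpIntBound}) and makes the improper $\lambda$-integral in the exponent genuinely convergent. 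Without this normal-form/integration-by-parts step (and without invoking \eqref{est2s}), neither the quasi-invariance estimate nor the existence of the limit density follows from your scheme.
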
 
Theorem \ref{MainThm3}  fits in the line of research initiated in \cite{sigma} aiming to study the transport of (infinite dimensional) Gaussian measures under the flow of Hamiltonian PDE's. 
For more details on this line of research we refer to 
\cite{STz2023, FT, K, CT, DT, FT2, GLT22, GLT23, GLT22bis, PTV, PTV2, OT, OT2, OST, OS, GOTW} and the references therein. 
A key novelty in the present work is the use of the global quantitative quasi-invariance estimate \eqref{QuasiInvFin} is order to show that the H\" older norms $C^{s'}$ of the individual trajectories are not growing faster that $t^\varepsilon$ for every $\varepsilon>0$ when $t\gg 1$. This phenomenon, crucially used in  the proof of  Theorem \ref{thbf2}, results from measure propagation global arguments, a large deviation argument in  H\" older norms in  combination with a local in time almost sure smoothing property of the nonlinear part of the solution. 
We believe that it is worth to extend the propagation of H\" older regularity arguments we introduce in this work to other dispersive PDE's.  As mentioned above, an interesting feature of  Theorem \ref{MainThm3} is that we can prove the existence of a limit measure. 
It would be interesting to decide how much this phenomenon may be extended to more general situations. 
\\

It is likely that using the methods of \cite{CO} one can prove a suitable extension of Theorem~\ref{MainThm3}. It is less clear how this extension applies to the binormal flow. We plan to address this issue in a future work. 
\\

We conclude the introduction with a final comment on the deterministic convergence statement in Theorem \ref{thbf}.
Looking at the proof (see the end of Section \ref{thbf}) we see that we could relax the assumption $s >1/6$ to $s>0$ for getting a continuous curve limit 
if we would have at our disposal the following periodic Strichartz estimate
\begin{equation}\label{PErStrInfty}
\| e^{it \partial_{xx}} \phi \|_{L^{4}_t L^{\infty}_x(\T)} 
\leq C_\varepsilon \| \phi \|_{H^{\varepsilon}(\T)}, \qquad \varepsilon >0. 
\end{equation}
Despite many recent progress on the decoupling theory for the paraboloid, generating from~\cite{BD}, the 
problem of establishing \eqref{PErStrInfty}, or disproving it, is still very much open. 
However, even if \eqref{PErStrInfty} is established, the H\"older regularity of the Gaussian curves established in the proof of Theorem \ref{thbf2} is not achieved.
\\

The rest of this paper is organized as follows. In the next section we collect some preliminary facts. 
In Section~3 we construct solutions of the binormal flow at low regularities in a general framework. In \S \ref{sect-thdet} we show that our data $\Omega_1^s$ with $s>0$ enters this general framework and prove the existence part of Theorem \ref{thbf}. In Section \ref{sect-thdet} we also prove the limiting behavior of these solutions under the restriction $s>1/6$, thus completing the proof of Theorem \ref{thbf}. 
In  Section~5 we prove the first part of Theorem~\ref{MainThm3}.  The existence of a limit density is proved in Section~6. Section~7 contains the proof of the control of the growth of H\"older norms of typical solutions. 
Finally, in Section~8 we prove Theorem \ref{thbf2}.\\

{\bf{Acknowledgements:}}  
We warmly thank the referee for the remarks which helped us to improve the quality of the manuscript. 
This research is partially supported as follows. VB is partially supported by the ERC advanced grant GEOEDP, by the French ANR project BOURGEONS. RL is supported by BERC program and by MICINN (Spain) projects Severo Ochoa CEX2021-001142, PID2021-123034NB-I00. NT is partially supported by the ANR project Smooth ANR-22-CE40-0017 and by the ERC advanced grant GEOEDP. LV is funded by MICINN (Spain) projects Severo Ochoa CEX2021-001142, and PID2021-126813NB-I00 (ERDF A way of making Europe), and by Eusko Jaurlaritza project IT1615-22 and BERC program.
\section{Preliminaries}\label{Sec:Setting}
In the first subsection we shall present the notion of filament function of a curve, the sets of curves $\Omega_t^s$ and their topology. 
Then in the following two subsections we briefly recall the construction of the NLS solution in Theorem \ref{wbl}. In \S 2.4 we give some related NLS continuity results.
\subsection{Filament functions of a curve and the sets $\Omega_t^s$.}\label{sectfil}
We call filament function of a curve $\gamma$ a function $u_\gamma$ obtained by the following parallel transport procedure. Denote the tangent vector $T=\partial_x\gamma$. We consider the parallel frames $(T,e_1,e_2)(x)$ obtained by solving the ODEs 
$$\partial_xe_1(x)=-\langle \partial_xT,e_1\rangle T(x),\quad  \partial_xe_2(x)=-\langle \partial_xT,e_2\rangle T(x),$$ 
with data at some $x_0$ say in $[0,1]$ given by an orthonormal frame of $\mathbb R^3$. This is always possible if the curve $\gamma$ is regular enough, for instance if $\partial_x^2\gamma\in L^2_{loc}$. Indeed, this gives global existence in $H^1_{loc}$ for the above ODEs. In addition, the orthonormal frame nature of $(T,e_1,e_2)(x)$ is preserved, since the matrix of the system of evolution in space of $(T,e_1,e_2)(x)$ is antisymmetric.  Then we define: 
$$u_\gamma=\langle \partial_x T,e_1\rangle +i\langle \partial_x T,e_2\rangle.$$ 
The real and imaginary part of $u_\gamma$ are a normal developement of the curve $\gamma$ (see \cite{Bishop}). We called them in this article filament function by refering to the notion introduced by Hasimoto \cite{H} for curves $\gamma$ with non-vanishing  curvature, that is a function determined by the curvature $c$ and torsion $\tau$ of the curve by the following formula: 
$$f_\gamma(x)=c(x)e^{i\int_{x_0}^x \tau(s)ds}.$$ 
For curves $\gamma$ with non-vanishing  curvature $u_\gamma=f_\gamma$ and 
\begin{equation}\label{geo_g}
(e_1+ie_2)(x)=(n+ib)(x)e^{i\int_{x_0}^x \tau(s)ds}
\end{equation}
if in the construction of $u_\gamma$ the initial orthonormal frame $(T,e_1,e_2)(x_0)$ is chosen to be the Frenet frame $(T,n,b)(x_0)$. 
Observe that even if the curvature vanishes, the expression in the right-hand side of \eqref{geo_g} continue to make sense via the parallel frame construction.  
\\

We note that the only degree of freedom in the filament function $u_\gamma$ construction is rotating the initial data $(T,e_1,e_2)(x_0)$, i.e. rotating $(e_1,e_2)$ in the plane orthogonal to $T(x_0)$, which yields by this construction another filament function that is of type $u_\gamma(x)e^{i\theta}$ (and changing $x_0$ boils down to the same argument). \\

Recall that we define $\Omega_t^{s}$ for $t>0$ to be the set of arclength parametrized curves $\gamma$ having a filament function $u_\gamma$ such that $e^{-i\frac{x^2}{4t}}u_\gamma(x)$ is $4\pi t$-periodic and belongs to $H^{s}([0,4\pi t])$.  In view of the previous paragraph the definition of the set $\Omega_t^s$ does not depend on the choice of the filament function. We note that the set $\Omega_t^{s}$ contains the curves 
with $4\pi t$-periodic curvature $c \in H^{s}([0,4\pi t])$ and with torsion~$\tau = \frac{x}{2t} + \eta$ where 
$\eta \in H^{s}([0,4\pi t])$ is also $[0,4\pi t]$-periodic. 
\\

We also note that $u_\gamma$ is constructed exclusively from $T=\partial_x\gamma$ so it does not depend on translations in space of $\gamma$, i.e. it does not depend on $\gamma(x_0)$. Moreover, $u_\gamma$ is uniquely modulo multiplication by $e^{i\theta}$ determined by $T(x_0)=\partial_x\gamma(x_0)$. Therefore if two curves $\gamma,\tilde\gamma\in\Omega_t^s$ satisfy $\gamma(x_0)=\tilde \gamma(x_0), \partial_x\gamma(x_0)=\partial_x\gamma(x_0)$ and $u_\gamma=u_{\tilde\gamma}$ then $\gamma=\tilde\gamma$. 
\\

Conversely, given $u\in L^2_{loc}$ we can construct a curve $\gamma_u$ which has $u$ as filament function, in the following way. First we construct frames $(T,e_1,e_2)(x)$ for all $x\in\mathbb R$ by solving the ODEs 
$$\partial_xT=({\rm Re}\, u) e_1+({\rm Im}\, u)e_2,\quad \partial_x e_1=-({\rm Re}\, u)T,\quad \partial_x e_2=-({\rm Im}\, u)T,$$
with data at say $x=0$ being the $\mathbb R^3$ canonical basis. Then we construct $\gamma_u$ as a curve having $T$ as tangent vector, for instance 
$$\gamma_u(x)=\int_0^x T(s)ds.$$
Modulo rotation and translation this is the unique curve having $u_\gamma$ as filament function.\\

We endow $\Omega_t^s$  with the topology induced by the distance
$$
d_{\Omega_t^s}(\gamma,\tilde \gamma)=\|\gamma-\tilde \gamma\|_{L^\infty([0,1])}+\|\partial_x\gamma-\partial_x\tilde\gamma\|_{L^\infty([0,1])}+\|e^{-i\frac{x^2}{4t}}(u_\gamma-u_{\tilde\gamma})\|_{H^{s}_{per}([0,4\pi t])}\,.
$$
The triangular inequality is straightforward. 
The fact that $d_{\Omega_t^s}(\gamma,\tilde \gamma)=0$ implies $\gamma=\tilde\gamma$ follows from the previous paragraph.
\\ 

Note that  $\gamma(x)-\tilde\gamma(x)$ can get  large when $x$ goes to infinity even if $d_{\Omega_t^s}(\gamma,\tilde\gamma)$ is small. However the topology induced by this distance on $\Omega_t^s\subset H^{s+2}_{loc}$ is equivalent to the one induced by the distance 
$$
\tilde d(\gamma,\tilde\gamma)=\sum_{m\in\mathbb N^*}2^{-m}\frac{\|\gamma-\tilde\gamma\|_{H^{2+s}([-m,m])}}{1+\|\gamma-\tilde\gamma\|_{H^{2+s}([-m,m])}} \,.
$$  
The fact that $\Omega_t^s\subset H^{s+2}_{loc}$ follows from the definition of a filament function $u_\gamma$ which ensures that if $u_\gamma\in H^s_{loc}$ then $\gamma\in H^{2+s}_{loc}$ and vice versa.
\\

Finally, we have that for $s_1\geq s_2$ and $t \in (0,1]$, the space  $ \Omega_t^{s_1}$ is continuously embedded and dense in  $ \Omega_t^{s_2}$.
\subsection{The NLS equations}
We are interested in critical regularity solutions on $t>0$ of the type \eqref{SumOfDeltaSolution}:
$$
u(t,x)=\sum_j \, A_j(t)\frac{e^{i\frac{(x-j)^2}{4t}}}{\sqrt{t}},
$$
where we fix the final condition at time~$t_0 = 1$.  The pseudo-conformal transformation 
\begin{equation}\label{Eq:Pseudoconf}
u(t,x)= pc(v)(t,x) := \frac{e^{i \frac{x^2}{4t}}}{\sqrt{t} }  \bar{v} \left( \frac{1}{t}, \frac{x}{t} \right)
\end{equation}
connects the solution $u(t)$ of \eqref{CubicNLS} on $0<t\leq 1$ and with data at time $1$ to $v(t)$, solution of the following 1-D cubic Schr\"odinger equation  on $1\leq t<\infty$:
\begin{equation}\label{NLSv}
iv_t+v_{xx}+\frac {1}{t}|v|^2v=0,
\end{equation} 
with initial condition at time $1$. 
Moreover, modulo constants, that we avoid for the sake of the
clearness of the presentation, and that do not affect the arguments and results, $v$ is $2\pi-$periodic with 
\begin{equation}\label{vB}
v(t,x)=\sum_j B_j(t) e^{itj^2}e^{ixj},
\end{equation}  
where
$$
B_j(t)=\overline{A_j}(\frac 1t).
$$
In particular,  the Fourier coefficients of $v(t)$ satisfy $|\widehat{v(t, \cdot)}(j)|=|A_j(1/t)|$ thus estimating Sobolev norms of $v(t)$ is equivalent to weighted estimates on the sequence $A_j(1/t)$. Also, we can note that as usually for questions concerning the long-time behavior of cubic NLS, we have the relation:
$$
B_j(t)=\mathcal F(e^{-it\Delta}v(t))(j).
$$
We denote then
$$
B(t,x):=\sum_j B_j(t)e^{ijx}=e^{-it\Delta}v(t)(x),
$$
and noting that
$$v(t)=\sum_j(e^{itj^2}B_k(t))e^{ijx}\Longrightarrow \hat v(\tau,j)=\hat B_j(\tau+j^2),$$
we introduce Bourgain's norms:
\begin{equation}\label{B}
\|v\|^2_{X^{s,b}}:=
\int \sum_j \langle j\rangle^{2s} \langle \lambda+j^2\rangle^{2b} |\widehat{v}(\lambda,j)|^2d\lambda
=
\int \sum_j \langle j\rangle^{2s} \langle \lambda\rangle^{2b} |\widehat{B}_{j}(\lambda)|^2d\lambda
=:\|B\|^2_{H^{s,b}},
\end{equation}
and the localized version to a given time interval $I$ as follows 
\begin{equation}\label{Bloc}
 \|v\|_{X_I^{s,b}}:= \inf_{\tilde v = v _{\vert_{I \times \T}}} \|\tilde v \|_{X^{s,b}}
 = \inf_{\tilde B = B _{\vert_{I \times \Z}}} \Big(\int \sum_j \langle j\rangle^{2s} \langle \lambda\rangle^{2b} |\widehat{\tilde B}_{j}(\lambda)|^2d\lambda\Big)^\frac 12
=:\|B\|_{H^{s,b}_I}
\end{equation}
The sequence $A(t)=\{A_j(t)\}_{j\in\mathbb Z}$ is solution of the system
\begin{equation}\label{Ajsyst}i\partial_t A_k(t)=\frac{1}{ t}\sum_{k-j_1+j_2-j_3=0}e^{-i\frac{k^2-j_1^2+j_2^2-j_3^2}{4t}}A_{j_1}(t)\overline{A_{j_2}(t)}A_{j_3}(t),\end{equation}
and the sequence $B(t)=\{B_j(t)\}_{j\in\mathbb Z}$ is solution of the system (modulo constants)
\begin{equation}\label{Bjsyst}i\partial_t B_k(t)=\frac{1}{ t}\sum_{k-j_1+j_2-j_3=0}e^{-it(k^2-j_1^2+j_2^2-j_3^2)}B_{j_1}(t)\overline{B_{j_2}(t)}B_{j_3}(t),\end{equation}
Let us introduce the nonresonant set
$$NR_k=\{(j_1,j_2,j_3),\, k-j_1+j_2-j_3=0,\, k^2-j_1^2+j_2^2-j_3^2\neq 0\}.$$
First we note that if $k-j_1+j_2-j_3=0$ we have 
$$\omega_{k,j_1,j_2}:=k^2-j_1^2+j_2^2-j_3^2=2(k-j_1)(j_1-j_2).$$ 
Thus $a(k)-a(j_1)+a(j_2)-a(j_3)$ vanishes on the resonant set for any function $a$. In particular, for any real function $a$ we have
\begin{equation*}
\label{Bjcons}
\partial_t \sum_k a(k)|B_k(t)|^2=\frac{1}{2 ti}\sum_{k-j_1+j_2-j_3=0}(a(k)-a(j_1)+a(j_2)-a(j_3))e^{-it\omega_{k,j_1,j_2}}B_{j_1}(t)\overline{B_{j_2}(t)}B_{j_3}(t)\overline{B_k(t)}
\end{equation*}
$$
=\frac{1}{2ti}\sum_{k;NR_k}(a(k)-a(j_1)+a(j_2)-a(j_3))e^{-it\omega_{k,j_1,j_2}}B_{j_1}(t)\overline{B_{j_2}(t)}B_{j_3}(t)\overline{B_k(t)}.
$$
Therefore by taking $a\equiv1$ we obtain that the system conserves the ``mass" :
\begin{equation*}
\sum_k|B_k(t)|^2=\|v(1)\|_{L^2(0,2\pi)}^2=:M.
\end{equation*}
In particular \eqref{Bjsyst} can be written as
\begin{equation*}
i\partial_t B_k(t)=\frac{1}{ t}\sum_{NR_k}e^{-it\omega_{k,j_1,j_2}}B_{j_1}(t)\overline{B_{j_2}(t)}B_{j_3}(t)+\frac 1t\left(2M-|B_k(t)|^2\right)B_k(t).\end{equation*}
\subsection{Construction of the solution $v(t)$ on $[1,\infty)$}\label{ssectconstr}
Let $\{b_k\}\in l^2$ and let $M=\sum_k|b_k|^2$. Let partition $[1, \infty]$ into intervals $I_j$ of length $\delta = \delta(M)$ to be specified. Let then $\eta_j$ a smooth positive compactly supported function such that $\eta_j =1$ on $I_j$.
For all $j \in\mathbb N^*$ we construct $\{B_{j}^\nu\}_{k\in\mathbb Z}$ recursively as follows. First we construct 
as $\{B_{k}^1\}_{k\in\mathbb Z}$ as the unique solution of:
$$i\partial_t B_{k}^1=\frac {1}t \eta_1(t)\sum_{j_1-j_2+j_3-k=0}e^{it\omega_{k,j_1,j_2}}B_{j_1}^1\overline{B_{j_2}^1}B_{j_3}^1(t),\quad B_{k}^1(1)=b_k,$$
over $\mathbb R$. This is possible invoking the Bourgain theory \cite{B94} for $\delta=\delta(M)$.
We note that $\{B_{k}^1\}_{k\in\mathbb Z}$ solves on $[1,1+\delta]$ the equation we are interested in:
$$
i\partial_t B_{k}^1=\frac {1}t \sum_{j_1-j_2+j_3-k=0}e^{it\omega_{k,j_1,j_2}}B_{j_1}^1\overline{B_{j_2}^1}B_{j_3}^1(t).
$$
Now we construct $\{B_{k}^2\}_{k\in\mathbb Z}$ solution of 
$$i\partial_t B_{k}^2=\frac {1}t \eta_2(t)\sum_{j_1-j_2+j_3-k=0}e^{it\omega_{k,j_1,j_2}}B_{j_1}^2\overline{B_{j_2}^2}B_{j_3}^2(t),\quad B_{k}^2(1+\delta)=B_{k}^1(1+\delta).$$
And so on. Then
$$B_k(t):=\left\{\begin{array}{c}B_{k}^1(t),\forall t\in I_1,\\B_{k}^2(t),\forall t\in I_2,\\\mbox{etc}\end{array}\right. $$
solves \eqref{Bjsyst} on $[1,\infty)$ with $B_k(1)=b_k$, and $v$ defined as in \eqref{vB} solves \eqref{NLSv} on $[1,\infty)$ with $v(1,x)=\sum_j b_j e^{ij^2}e^{ixj}$.
\\

Let us also recall that from Bourgain's contruction we have a local control of the Strichartz norm. For instance, for any interval 
$I_j = [j \delta, (j+1)\delta]$, we have a uniform in $j$ bound given by the $L^2$ norm:
\begin{equation}\label{Str}
\|v\|_{L^4([j \delta, (j+1)\delta];L^4[0,2\pi])}\leq C(\|v(1)\|_{L^2[0,2\pi]}).
\end{equation}
Similarly we have, for all $\varepsilon >0$:
\begin{equation}\label{Str2}
\|v\|_{L^6([j \delta, (j+1)\delta];L^6[0,2\pi])}\leq C(\varepsilon, \|v(1)\|_{L^2[0,2\pi]})\|v(j\delta)\|_{H^{\varepsilon}[0,2\pi]}.
\end{equation}
Comparing \eqref{Str} and \eqref{Str2}, we note that a priori in \eqref{Str2}  the right hand-side depends on the solution at time $j\delta$ while for the 
$L^2$ estimate \eqref{Str} we can go back until the initial datum, since the $L^2$ norm is conserved.  
%
However, exploiting the presence of the $\frac 1t$-coefficient in the equation \eqref{NLSv}, we showed in \cite{BLTV} that we actually have the uniform in time control \eqref{est2s}, i.e. 
$$
\sup_{t\geq 1}\|\{B_j(t)\}\|_{l^{2,s}}\leq C(\|\{B_j(1)\}\|_{l^{2}})\|\{B_j(1)\}\|_{l^{2,s}}.
$$
\subsection{Continuity estimates}\label{sect-cont}
Combining the local existence theory of strong solutions developed above and the global control \eqref{est2s} 
of the Sobolev/weighted-$\ell^{2}$ norms given by Theorem ~{\ref{wbl}}, one can prove global continuity of the 
flow map. More precisely, for two solutions $v,\tilde v$ of \eqref{NLSv} then for $T>1, s\geq 0$ we have
\begin{equation}\label{contv}
\sup_{t\in [1,T]}\| v(t) - \tilde v(t) \|_{H^s} \leq C(T,\|v(1)\|_{L^2},\|\tilde v(1)\|_{L^2})
\| v(1) - \tilde v(1) \|_{H^s}.
\end{equation}
  This is standard and can be obtained following for instance the arguments in  \S 3.5.1 in \cite{ErTz} based on estimates in Bourgain spaces. In the same spirit we state a result for the whole family of approximated systems \eqref{NLSTruncated}, that will be of use in \S \ref{Sec:QuasiInvariance}. 
The proof of the local well-posednes and of the continuity estimates is indeed independent of the truncation parameter     
$N \in \N \cup  \{\infty \} $ ($N= \infty$ correspond to the actual solutions introduced in the previous subsection).
\begin{lemma}\label{InverseFlowStab}
Let $T>0$. Let $s \geq 0$ and let $v^N(t)$ and $w^N(t)$ solutions of the truncated system \eqref{NLSTruncated} with initial conditions 
$v^N(\tau)$ and $w^N(\tau)$. Assume that $\| w^N(\tau) \|_{L^2}, \| v^N(\tau) \|_{L^2} \leq M$, $\tau\in[1,T]$.
 One has for all $1\leq t,\tau \leq T$ the regularity estimate
$$
\| v^N(t) - w^N(t) \|_{H^s} \leq C(T,M)
\| v^N(\tau) - w^N(\tau) \|_{H^s}.
$$
In terms of the solution map this writes
\begin{equation}\label{InvFlowPolyTruncated}
 \| \Phi^{N}_{\tau,t} v - \Phi^{N}_{\tau,t} w \|_{H^s} \leq C(T,M)
 \| v -  w  \|_{H^s},  
  \qquad 1\leq t,\tau \leq T.
\end{equation}
\end{lemma}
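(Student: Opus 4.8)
\textbf{Proof plan for Lemma \ref{InverseFlowStab}.}
The plan is to prove the difference estimate by a standard contraction/bootstrap argument in the Bourgain spaces $X^{s,b}$ (equivalently $H^{s,b}$ for the Fourier coefficients $B_j$), exploiting the decay of the coefficient $\tfrac1t$ and the fact that the $L^2$ norm stays bounded by $M$ on the whole interval $[1,T]$. First I would recall the local existence theory on a short interval $J=[a,a+\delta]\subset[1,T]$ with $\delta=\delta(M)$: by the standard Bourgain machinery for the periodic cubic NLS (\cite{B94}), for $b=\tfrac12^+$ one has the linear estimate $\|\eta(t)e^{it\partial_{xx}}f\|_{X^{s,b}_J}\lesssim\|f\|_{H^s}$, the Duhamel estimate $\|\eta(t)\int_a^t e^{i(t-\sigma)\partial_{xx}}F(\sigma)\,d\sigma\|_{X^{s,b}_J}\lesssim\|F\|_{X^{s,b-1}_J}$, and the trilinear estimate $\|u_1\bar u_2 u_3\|_{X^{s,b-1}_J}\lesssim\prod_i\|u_i\|_{X^{s,b}_J}$ valid on $\T$ for every $s\ge0$; the same estimates hold uniformly in $N\in\N\cup\{\infty\}$ since Fourier truncation only removes frequencies and never increases any of these norms. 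Since the nonlinearity in \eqref{NLSTruncated} carries a factor $\tfrac1t\le 1$ on $[1,T]$, these estimates give local well-posedness on intervals $J$ of length $\delta=\delta(M)$ whenever $\|v^N(a)\|_{L^2}\le M$, with $\|v^N\|_{X^{s,b}_J}\le C(M)\|v^N(a)\|_{H^s}$ for $s\ge0$.

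Next I would derive the difference estimate on a single such interval $J=[a,a+\delta]$. Writing $r^N:=v^N-w^N$, one has $r^N$ solving a linear Schrödinger equation with forcing of the schematic form $\tfrac1t(v^N v^N r^N + v^N w^N r^N + w^N w^N r^N)$ (the six trilinear terms obtained by expanding $|v^N|^2v^N-|w^N|^2w^N$, each multilinear in $r^N$ and in $v^N$ or $w^N$). Applying the linear, Duhamel and trilinear estimates above on $J$, and using the already established bounds $\|v^N\|_{X^{0,b}_J},\|w^N\|_{X^{0,b}_J}\le C(M)$, one obtains
\begin{equation*}
\|r^N\|_{X^{s,b}_J}\le C(M)\,\|r^N(a)\|_{H^s}+C(M)\,\delta(M)^{\theta}\,\|r^N\|_{X^{s,b}_J}
\end{equation*}
for some $\theta>0$ coming from the time-localization gain; here one must be a little careful to put the $H^s$ weight on the factor carrying $r^N$ and only $L^2=H^0$ norms on the other two factors, which is legitimate because in the trilinear estimate one of the three inputs can be taken in $X^{0,b}$ while the remaining two (one of which carries the derivative) sit in $X^{s,b}$ — this is the standard way the cubic trilinear estimate is used and it is exactly what makes the constant depend only on $M$ and not on higher norms of $v^N,w^N$. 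Shrinking $\delta$ if necessary (depending only on $M$) to absorb the last term, we get $\|r^N\|_{X^{s,b}_J}\le 2C(M)\|r^N(a)\|_{H^s}$, and since $X^{s,b}_J\hookrightarrow C(J;H^s)$ for $b>\tfrac12$ this yields $\sup_{t\in J}\|r^N(t)\|_{H^s}\le C_1(M)\|r^N(a)\|_{H^s}$.

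Finally I would iterate over consecutive intervals. Partition $[1,T]$ into $\sim T/\delta(M)$ intervals of length $\delta(M)$; the $L^2$ conservation for \eqref{NLSTruncated} keeps $\|v^N(t)\|_{L^2},\|w^N(t)\|_{L^2}\le M$ at every endpoint, so the one-interval estimate applies on each piece with the same constant $C_1(M)$, and composing gives $\sup_{t\in[1,T]}\|v^N(t)-w^N(t)\|_{H^s}\le C_1(M)^{T/\delta(M)}\|v^N(1)-w^N(1)\|_{H^s}=:C(T,M)\|v^N(1)-w^N(1)\|_{H^s}$. To get the statement as written — with arbitrary initial time $\tau\in[1,T]$ and arbitrary final time $t\in[1,T]$, in both time directions — I would note the argument is time-reversible (the equation is, and all the Bourgain estimates are symmetric in $t\mapsto -t$ up to the harmless replacement of $\tfrac1t$ by $\tfrac1{|t|}$, still bounded by $1$ on the relevant range), so the same iteration run forward from $\tau$ to $t$ when $t>\tau$ and backward when $t<\tau$ gives $\|v^N(t)-w^N(t)\|_{H^s}\le C(T,M)\|v^N(\tau)-w^N(\tau)\|_{H^s}$, which is precisely \eqref{InvFlowPolyTruncated}. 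The main obstacle is the bookkeeping in the trilinear estimate ensuring the constant genuinely depends only on $M=\|\cdot\|_{L^2}$ (and on $T$ through the number of iterations) and not on any higher Sobolev norm — this is what forces one to be disciplined about placing the derivative weight on the $r^N$ factor and only $L^2$-type norms on the other two — together with checking that every constant is uniform in the truncation parameter $N$, which is immediate since sharp frequency cut-offs are bounded on all the spaces involved.
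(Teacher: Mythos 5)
Your overall architecture — Bourgain-space local theory on intervals of length $\delta(M)$, a difference estimate on each interval, iteration over $[1,T]$ using conservation of the $L^2$ norm, and uniformity in $N$ because sharp frequency cut-offs are bounded on all the spaces involved — is exactly the standard route the paper has in mind (the paper gives no details and refers to \S 3.5.1 of \cite{ErTz}). However, the step you single out as the crucial bookkeeping is not a valid estimate. There is no trilinear bound of the form
\begin{equation*}
\| u_1 \bar u_2 u_3 \|_{X^{s,b-1}} \lesssim \| u_1 \|_{X^{0,b}} \| u_2 \|_{X^{s,b}} \| u_3 \|_{X^{0,b}}
\end{equation*}
in which you are free to decide that the $H^s$ weight sits on the factor carrying $r^N:=v^N-w^N$: the derivative weight must go on the factor producing the highest output frequency, which can perfectly well be $v^N$ or $w^N$. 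Concretely, among the six terms of $|v^N|^2v^N-|w^N|^2w^N$ there is $v^N\,\overline{r^N}\,w^N$; take $v^N(t,x)=e^{iNx+iN^2t}+1$, $w^N\equiv 1$, $r^N\equiv 1$ (all of size $O(1)$ in $X^{0,b}$, and $\|r^N\|_{X^{s,b}}=O(1)$). The interaction $(k_1,k_2,k_3)=(N,0,0)$ is resonant ($k=k_1$, $k_2=k_3$), so the output $e^{iNx+iN^2t}$ sits on the parabola and its $X^{s,b-1}$ norm is of size $N^{s}$, while your claimed right-hand side is $O(1)$. Thus the estimate you rely on fails by a factor $N^{s}$ for every $s>0$; the correct (tame) estimate necessarily contains terms $\|v^N\|_{X^{s,b}}\|w^N\|_{X^{0,b}}\|r^N\|_{X^{0,b}}$ etc., with the $X^{s,b}$ norm on a solution rather than on the difference.

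Once the correct tame estimate is used, your one-interval bound becomes, after absorbing the term linear in $\|r^N\|_{X^{s,b}_J}$,
\begin{equation*}
\|r^N\|_{X^{s,b}_J} \lesssim \|r^N(a)\|_{H^s} + \delta^{\theta} C(M)\bigl(\|v^N\|_{X^{s,b}_J}+\|w^N\|_{X^{s,b}_J}\bigr)\|r^N\|_{X^{0,b}_J},
\end{equation*}
and the second term cannot be removed: iterating, the Lipschitz constant picks up $\sup_t\|v^N(t)\|_{H^s}$ and $\sup_t\|w^N(t)\|_{H^s}$, which by \eqref{est2s} are controlled by $C(M)$ times the $H^s$ norms of the data, not by $M$ alone. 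So your argument, repaired with the true trilinear estimate, proves \eqref{InvFlowPolyTruncated} with a constant $C(T,M,\|v\|_{H^s},\|w\|_{H^s})$ rather than the stated $C(T,M)$; the resonant interaction above shows that no rearrangement of the multilinear bookkeeping alone will give the $M$-only dependence, since $\partial_t r^N$ genuinely contains contributions of size $\|v^N\|_{H^s}\|r^N\|_{L^2}$. This weaker conclusion is in fact what the paper actually uses later (in the proof of \eqref{FlowStability} the constant is $C(A,M)$ with $A$ compact, hence bounded in $H^{s'}$), but as a proof of the lemma as literally stated your proposal has a genuine gap at precisely the point you identified as the main obstacle.
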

\section{Construction of the binormal flow at low regularity}\label{Sec:BinormalFlow}
The goal of this section is to make sense of the Schr\"odinger map and of the binormal flow 
starting from solutions of the cubic NLS at low regularity. The main results are Theorems~\ref{ThmWeak1}-\ref{MainThhBinormal}. One can compare it with 
Theorem~3.1 in \cite{NSVZ}, that holds under slightly weaker assumptions (the convergence of NLS smooth solutions $u^n$ to $u$ in $L^2_{t,x,loc}$) but provides 
less information (SM solution $T\in L^2H^1_{loc}\cap \mathcal CL^2_{loc}$). Suitable uniqueness of weak solutions of SM in the periodic setting in $H^s$, $s\geq 1$ was proved in \cite{ChErTz} without this assumption at the NLS level, and the solutions obtained are weakly continuous in $H^s$. Here we suppose a stronger assumption then in \cite{NSVZ}, namely \eqref{AssWeak1} ($\phi u^n$ converges to $\phi u$ in $L^\infty L^2$ for all test functions $\phi$) and get the stronger property $T\in\mathcal C H^1_{loc}$. The proof, as well as the ones in \cite{NSVZ} and \cite{ChErTz} are based on the Hasimoto approach using parallel frames, used first in \cite{Ko} to avoid the issue of vanishing curvature curves for which the use of Frenet frames as in the original Hasimoto transform is not possible. \\

In this section we set the initial time to be~$t_0 \in (0, \infty)$ and we look for weak 
solutions of the Schr\"odinger map and of the binormal flow in an open interval $I$ containing~$t_0$.
Later, in our applications, we will take $t_0 = 1$ and $I =(0, a]$ with $a \geq 1$. 
\subsection{The Hasimoto approach}\label{sectHas}
First we recall the classical Hasimoto approach to construct binormal flow solutions by using sufficiently regular solutions of the cubic NLS equation \eqref{CubicNLS}, as in particular the solutions of Theorem \ref{wbl} for $s>\frac 32$. Let $\mathcal B$ be an orthonormal basis of $\mathbb R^3$, $x_0\in [0,1]$ and $P \in \R^3$. 
Let assume that we have a smooth solution $u$ of \eqref{CubicNLS} on an open time interval~$I$.  
Starting from $u$, the first step will be to construct a frame  
\begin{equation}
\mathcal{T}(t,x) = \left( 
\begin{array}{l}
T \\ 
h \\ 
k
\end{array} 
\right)(t,x).
\end{equation}
such that the first vector $T$ is a solution of the Schr\"odinger map.  
In order to construct $\mathcal{T}$ we solve the ODE
\begin{equation}\label{ODEx0}
\partial_t \mathcal{T}(t,x_0) = \Omega(t,x_0) \mathcal{T}(t,x_0), \qquad t \in I,
\end{equation}
with initial condition $\mathcal{T}(t_0,x_0)=\mathcal B$ at time $t_0 $.
Here 
\begin{equation}\label{OmegaODE}
\Omega(t,x):= \left( 
\begin{array}{lll}
0  & - {\rm Im\,} u_x & {\rm Re\,} u_x
  \\
   {\rm Im\,}  u_x  & 0  & - \frac{|u|^2}{2} 
  \\
 - {\rm Re\,} u_x  &     \frac{|u|^2}{2}  & 0
\end{array}
\right)(t,x) .
\end{equation}
The outcome is a curve with values in $SO(3)$
$$
t \in I \to \mathcal{T}(t,x_0) \in SO(3).
$$
Then, for all $t \in I$, we solve the family of ODEs:
\begin{equation}\label{OdeInX}
  \partial_x \mathcal{T}(t,x) = \Gamma(t,x) \mathcal{T}(t,x) ,  \qquad x \in \R,
\end{equation}
with initial conditions prescribed at the point $(t,x_0)$ by the value of 
$\mathcal{T}(t,x_0)$. 
Here 
\begin{equation}\label{GammaODE}
\Gamma(t,x):=
\left( 
\begin{array}{lll}
0  & {\rm Re\,} u & {\rm Im\,}  u 
  \\
  - {\rm Re\,}  u  & 0  & 0 
  \\
 - {\rm Im\,}  u  &     0 & 0
\end{array}
\right) .
\end{equation}
This defines a map 
$$
(t,x) \in I\times \R   \to  \mathcal{T}( t,x) \in SO(3).
$$

Using the fact that the function 
 $$(t,x) \in I \times \R  \to u(t,x) \in \mathbb{C}$$ 
 is a solution of the cubic NLS equation \eqref{CubicNLS} one can prove by using $\partial_{tx}\mathcal T=\partial_{xt}\mathcal T$, that, at least for regular $u$, 
 the ODE \eqref{ODEx0} is actually valid at any $x \in \R$, namely
  \begin{equation}\label{ODEx}
\partial_t \mathcal{T}(t,x) = \Omega(t,x) \mathcal{T}(t,x), \qquad t \in I.
\end{equation}
In other words, the two linear families of ODEs \eqref{OdeInX} and \eqref{ODEx} are compatible.
\\

It is then easy to see that  
  $T$ solves the Schr\"odinger map equation 
  \begin{equation}\label{eq:SM}
  \partial_t T = T \wedge  T_{xx}.
  \end{equation}
Indeed one can compute 
$\partial_t T = - {\rm Im\,} \partial_x u\, h + {\rm Re\,} \partial_x u\, k $ from $\eqref{ODEx}$
and 
\begin{align*}
T \wedge  T_{xx} 
& = T \wedge\left(  {\rm Re\,} u\, h + {\rm Im\,}  u\, k \right) _x
\\ 
&= T \wedge  \left(  {\rm Re\,} u_x\, h + {\rm Im\,}  u_x\, k +  {\rm Re\,} u\, h_x + {\rm Im\,}  u\, k_x \right)
\\ 
&=     {\rm Re\,} u_x\, k - {\rm Im\,}  u_x\, h,  
\end{align*}
where one used that  $h_x$ and  $k_x$ are parallel to $T$, 
 from \eqref{OdeInX}, and $T \wedge h = k$, $T \wedge k = -h$.
\\

Moreover, one can easily check
 that $\chi$ defined 
as
\begin{equation}\label{ChiDefinitionVeryFirst}
\chi(t,x): = P + \int_{t_0}^{t} (T \wedge T_{x})(s,x_0) ds + \int_{x_0}^x  T(t, y) dy, 
\end{equation}
is a solution of the binormal flow equation \eqref{BinormalVeryFirst}, 
and that $T$ is actually the tangent vector of the arc-length parametrized  curve $\chi$.  
Indeed, simply take the $\partial_t$ derivative of \eqref{ChiDefinitionVeryFirst} and use equation \eqref{eq:SM}. Last but not the least let us note that $u(t)$ is a filament function of $\chi(t)$ and that due to \eqref{OdeInX} we have 
\begin{equation}\label{chit}
\partial_t\chi=T\wedge T_x={\rm Im\,}(\overline{u}(h+ik)).
\end{equation}

We note that if we choose in the definition \eqref{ChiDefinitionVeryFirst} $y_0$ instead of $x_0$, the resulting curve $\chi_{y_0}$ is a space translation of $\chi$:
\begin{equation}\label{translationchi}
\chi_{y_0}(t,x)=\chi(t,x)+\chi_{y_0}(t_0,y_0)-\chi_{y_0}(t_0,y_0).
\end{equation}

\subsection{Construction of BF solutions at low regularity.} 
%
%
%
%
%
%
First we start with a result yielding from a certain type of NLS solutions a frame that solves in a weak way the equations \eqref{OdeInX}-\eqref{ODEx} satisfied by the frames of smooth BF solutions.  
\begin{theorem}\label{ThmWeak1}
Let $I$ an open interval. Let $u\in L^1_{loc}(I\times \mathbb R)$ that can be approximated by smooth solutions $u^n$ of \eqref{CubicNLS}
 in the following sense:
\begin{equation}\label{AssWeak1}
\phi u^{n} \to \phi u \quad \mbox{in} \quad L^{\infty} (I ; L^2(\R)), \qquad \forall \phi \in C^{\infty}_c (  I\times \R  ).
\end{equation}
Then for any $t_0,x_0\in I\times[0,1]$ and any $\mathcal B$ orthonormal frame of $\mathbb R^3$ we can construct a sequence $\mathcal{T}^n := \left( \begin{array}{l} T^n \\ h^n \\ k^n \end{array} \right)$ of smooth frames that are solutions to the 
equations \eqref{OdeInX}-\eqref{ODEx} and converges in
$C (J ; H^{1}_{loc}(\R))$ for all intervals $J \subset \subset I$\footnote{We say that $J \subset \subset I$ if $J \subset K \subset I$ with $K$ compact. }, with $\mathcal T^n(t_0,x_0)=\mathcal B$. In particular, the sequence of frames 
$\mathcal{T}^n$ converges to a continuous frame $\mathcal{T} := \left( \begin{array}{l} T \\ h \\ k \end{array} \right)$ uniformly over compact subsets of $I \times \R$
and we have $|T^n| = |h^n| = |k^n| = 1$ as well as $|T| = |h| = |k| = 1$, and $\mathcal T(t_0,x_0)=\mathcal B$.
 
 We also have 
\begin{enumerate}[label=(\roman*)]
\item
For all $t \in I$   
the equation \eqref{OdeInX}, namely   
\begin{equation}\label{fdjksldkjgdjsn}
   \partial_x \mathcal{T}(t,x) = \Gamma(t,x) \mathcal{T}(t,x), 
 \end{equation}  
 is satisfied for almost every $x \in \R$.
 \item
$\mathcal{T}$ is a 
weak solution of 
the equation \eqref{ODEx}, namely  
\begin{equation}\label{OmegaODEWeak}
 \left\{
\begin{array}{lll}
 \int \Phi^T_t \cdot T \, dtdx & = &  \int  \Phi^T_x  \cdot \left(  - {\rm Im\,}  u\, h +{\rm Re\,}  u\, k     \right) dt dx
  \\ && \\
\int \Phi^h_t  \cdot h \, dtdx & = & 
 \int \left(  \Phi^h_x \cdot {\rm Im\,}  u\, T  + \Phi^h \cdot  {\rm Im\,}  u\, T_x  
+ \Phi^h \cdot \frac{|u|^2}2\, k  \right) dtdx
  \\ && \\
 \int \Phi^k_t  \cdot k \, dtdx &=&  \int \left( - \Phi^k_x \cdot  ({\rm Re\,}  u) T  
- \Phi^k  \cdot {\rm Re\,}  u\, T_x  - \Phi^k \cdot \frac{|u|^2}2 \, h \right)  dtdx
\end{array}
\right. ,
\end{equation}
for all triple of vectors $\Phi^j$, $j=T,h,k$, with components in  $C^{\infty}_c (  \R \times I )$.
\end{enumerate}

The construction of $\mathcal {T}$ is independent of the approximating sequence $u^n$, in the sense that for any other 
sequence $\tilde u^n$ verifying \eqref{AssWeak1}  we have that the relative 
family of solutions $\mathcal{\tilde T}^n$
converges also to $\mathcal{T}$ 
in
$C (J ; H^1_{loc}(\R))$ for all $J \subset \subset I$.
\end{theorem}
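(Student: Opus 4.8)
The plan is to build the $\mathcal T^n$ by the Hasimoto procedure of \S\ref{sectHas} applied to the smooth solutions $u^n$, to prove that $(\mathcal T^n)$ is Cauchy in $C(J;H^1_{loc}(\R))$ for every $J\subset\subset I$, and then to pass to the limit in the (classical) equations satisfied by the $\mathcal T^n$; the independence of the approximation will come from an interleaving argument. \textbf{Construction and uniform bounds.} For each $n$ I would solve the time ODE \eqref{ODEx0} at $x_0$ with datum $\mathcal B$ at $t_0$, then for every $t$ the family of space ODEs \eqref{OdeInX}; since $u^n$ is smooth, the compatibility computation recalled in \S\ref{sectHas} shows that the resulting $\mathcal T^n$ also solves \eqref{ODEx} at every $x\in\R$. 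The matrices $\Omega^n,\Gamma^n$ being antisymmetric and $\mathcal B\in SO(3)$, each $\mathcal T^n(t,x)$ stays in $SO(3)$, so $|T^n|=|h^n|=|k^n|=1$. Taking $\phi\equiv 1$ near $J\times[-R,R]$ in \eqref{AssWeak1} gives $K:=\sup_n\sup_{t\in J}\|u^n(t)\|_{L^2([-R,R])}<\infty$, and also that $u\in L^\infty_{loc}(I;L^2_{loc})$ (far better than the a priori $L^1_{loc}$) and $u^n(t)\to u(t)$ in $L^2_{loc}$ for every $t\in I$. Since $|\Gamma^n|\lesssim|u^n|$ and $|\mathcal T^n|\le\sqrt3$, integrating \eqref{OdeInX} from $x_0$ yields the uniform bound $\|\mathcal T^n\|_{L^\infty(J;H^1([-R,R]))}\le C(R,J,K)$, and a Gr\"onwall estimate in $x$ gives, for each $t$,
\[
\|(\mathcal T^n-\mathcal T^m)(t)\|_{L^\infty([-R,R])}\le C(R,K)\Big(|(\mathcal T^n-\mathcal T^m)(t,x_0)|+\|(u^n-u^m)(t)\|_{L^2([-R,R])}\Big).
\]

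\textbf{The Cauchy estimate.} Fix $t_0\in J\subset\subset I$, $\psi\in C^\infty_c(\R)$ with $\psi\ge0$ and $\psi\equiv1$ on $[-R,R]$, and set $\mathbf D:=\mathcal T^n-\mathcal T^m$, $|\mathbf D|^2:=|T^n-T^m|^2+|h^n-h^m|^2+|k^n-k^m|^2$, $w:=u^n-u^m$. Differentiating $t\mapsto\int\psi\,|\mathbf D(t)|^2\,dx$ and writing $\partial_t\mathbf D=\Omega^n\mathbf D+(\Omega^n-\Omega^m)\mathcal T^m$, the antisymmetry of $\Omega^n$ makes the quadratic-in-$\mathbf D$ contribution $\int\psi\,\langle\Omega^n\mathbf D,\mathbf D\rangle$ vanish pointwise, so only $\int\psi\,\langle(\Omega^n-\Omega^m)\mathcal T^m,\mathbf D\rangle$ survives. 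This last term still contains $\mathrm{Re}\,w_x,\mathrm{Im}\,w_x$, which are not controlled; here I would integrate by parts in $x$, throwing $\partial_x$ onto $\psi$ and onto the frame quantities, and then substitute $T^m_x,h^m_x,k^m_x$ and $\partial_x\mathbf D$ from the space ODE \eqref{OdeInX} — all of which are linear in $u^n,u^m$ with $O(1)$ frame coefficients. Together with $||u^n|^2-|u^m|^2|\le|w|(|u^n|+|u^m|)$ for the zero-order entry of $\Omega^n-\Omega^m$ and the bound $|\mathbf D|\le2\sqrt3$, this leads to $\big|\frac{d}{dt}\int\psi\,|\mathbf D(t)|^2\,dx\big|\le C_{\psi}(1+K)\,\|w(t)\|_{L^2(\mathrm{supp}\,\psi)}$. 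Integrating from $t_0$, where $\mathbf D(t_0,x_0)=\mathcal B-\mathcal B=0$ forces $\int\psi\,|\mathbf D(t_0)|^2\,dx\to0$ by the $x$-Gr\"onwall estimate of the previous step, one obtains that $\mathcal T^n$ is Cauchy in $C(J;L^2_{loc})$. Interpolating with the uniform $H^1_{loc}$ bound and using $H^\sigma\hookrightarrow C^0$ for $\sigma\in(1/2,1)$ upgrades this to $\mathcal T^n\to\mathcal T$ in $C(J;C^0_{loc})$; reinjecting once more in \eqref{OdeInX} shows $\partial_x\mathcal T^n$ is Cauchy in $C(J;L^2_{loc})$, hence $\mathcal T^n\to\mathcal T$ in $C(J;H^1_{loc})$, with $|T|=|h|=|k|=1$ (by uniform convergence) and $\mathcal T(t_0,x_0)=\mathcal B$.

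\textbf{Limiting equations and independence of the approximation.} For (i), I would fix $t$: $\partial_x\mathcal T^n(t)\to\partial_x\mathcal T(t)$ in $L^2_{loc}$, and since $u^n(t)\to u(t)$ in $L^2_{loc}$ while $\mathcal T^n(t)\to\mathcal T(t)$ in $L^\infty_{loc}$, also $\Gamma^n(t)\mathcal T^n(t)\to\Gamma(t)\mathcal T(t)$ in $L^2_{loc}$, so \eqref{fdjksldkjgdjsn} holds for a.e. $x$. For (ii), the smooth frames satisfy $\partial_t\mathcal T^n=\Omega^n\mathcal T^n$ classically; testing against $\Phi^T,\Phi^h,\Phi^k$, integrating by parts in $t$ and integrating by parts in $x$ exactly the $u^n_x$-entries (using $-\mathrm{Im}\,u^n_x h^n+\mathrm{Re}\,u^n_x k^n=\partial_x(-\mathrm{Im}\,u^n h^n+\mathrm{Re}\,u^n k^n)$ and the analogous identities for the $h,k$ components, the leftover lower-order terms cancelling by \eqref{OdeInX}) yields \eqref{OmegaODEWeak} with $(u^n,\mathcal T^n)$ in place of $(u,\mathcal T)$; every term then passes to the limit, since $\mathcal T^n\to\mathcal T$ in $C(J;H^1_{loc})$ (so $T^n_x\to T_x$ in $C(J;L^2_{loc})$) and $u^n\to u$ in $L^\infty(J;L^2_{loc})$, whence $u^n\mathcal T^n\to u\mathcal T$, $u^nT^n_x\to uT_x$ and $|u^n|^2\to|u|^2$ in $L^\infty(J;L^1_{loc})$, which is enough against $C^\infty_c$ test vectors. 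Finally, if $\tilde u^n$ is another sequence obeying \eqref{AssWeak1} with frames $\tilde{\mathcal T}^n$, the interleaved sequence $u^1,\tilde u^1,u^2,\tilde u^2,\dots$ still obeys \eqref{AssWeak1} with limit $u$, so by the Cauchy estimate above the interleaved sequence of frames converges in $C(J;H^1_{loc})$; as it contains both $\mathcal T^n\to\mathcal T$ and $\tilde{\mathcal T}^n\to\tilde{\mathcal T}$, necessarily $\mathcal T=\tilde{\mathcal T}$.

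\textbf{Main obstacle.} The crux is the Cauchy estimate: the time ODE \eqref{ODEx} inherently involves $u_x$, while the hypothesis only provides $L^2_{loc}$ control of $u$ and of $u^n-u^m$. This is overcome by two structural facts — the antisymmetry of $\Omega$, which removes the dangerous $u_x$-contribution quadratic in the frame difference, and an integration by parts trading the remaining $(u^n-u^m)_x$-contribution for $(u^n-u^m)$ against space-derivatives of the frame, which are themselves $O(|u^n|+|u^m|)$ thanks to the space ODE \eqref{OdeInX}. Once this estimate is in hand, the rest is soft.
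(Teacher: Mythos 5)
Your proposal is correct and follows essentially the same route as the paper's proof: construct the smooth frames $\mathcal T^n$ by the Hasimoto procedure, exploit the antisymmetry of $\Gamma$ and $\Omega$ to kill the quadratic terms in the defect $\mathcal T^n-\mathcal T^m$, trade the uncontrolled $(u^n-u^m)_x$ contribution for $(u^n-u^m)$ by an integration by parts in $x$ using that frame derivatives are $O(|u^n|+|u^m|)$ via \eqref{OdeInX}, conclude Cauchy in $C(J;L^2_{loc})$, upgrade to $C(J;H^1_{loc})$ by reinjecting into \eqref{eq:CauchyInX}, and pass to the limit in the equations. Your minor variants (differentiating $\int\psi|\mathcal T^n-\mathcal T^m|^2\,dx$ directly rather than testing weakly, interpolation plus Sobolev embedding instead of the paper's intermediate $W^{1,1}_{loc}$ step, and the interleaving trick in place of estimating the defect $\mathcal T^n-\tilde{\mathcal T}^n$ for uniqueness) are all legitimate and do not change the substance of the argument.
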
 

%
%
 \begin{proof}
 Using the procedure described at the beginning of the section, 
 we can construct a frame 
 \begin{equation}
\mathcal{T}^n(t,x) = \left( 
\begin{array}{l}
T^n \\ 
h^n\\ 
k^n
\end{array} 
\right)(t,x)
\end{equation}
 associated with $u^n$ which satisfies equations \eqref{OdeInX}-\eqref{ODEx}, with $\mathcal T^n(t_0,x_0)=\mathcal B$. It will be useful to rewrite 
 \eqref{OdeInX} explicitly 
 \begin{equation}\label{OdeInXWeakN}
\left\{ 
\begin{array}{lll}
 \partial_x T^n  & = &    {\rm Re\,} u^n\, h^n +  {\rm Im\,}  u^n \, k^n,    
\\
 \partial_x h^n & = & -  {\rm Re\,} u^n\, T^n,
\\
\partial_x k^n  & = & -  {\rm Im\,} u^n\, T^n,
\end{array}
\right.
\end{equation}
and to note that from \eqref{ODEx} and integration by parts (using a cancelation coming from the last two equations of \eqref{OdeInXWeakN}) we obtain 
\begin{equation}\label{OmegaODEWeakN}
 \left\{
\begin{array}{lll}
 \int \Phi^T_t \cdot  T^n \, dtdx & = &  \int  \Phi^T_x \cdot \left( - {\rm Im\,}  u^n\, h^n  + {\rm Re\,}  u^n\, k^n       \right) dtdx,
  \\ && \\
\int \Phi^h_t \cdot h^n \, dtdx & = & 
 \int \left(  \Phi^h_x \cdot  {\rm Im\,}  u^n\, T^n  + \Phi^h \cdot   {\rm Im\,}  u^n\, T_x^n  
+ \Phi^h \cdot \frac{ |u^n|^2}2 k^n  \right) dtdx,
  \\ && \\
 \int \Phi^k_t \cdot  k^n \, dtdx &=&  \int \left( - \Phi^k_x \cdot  {\rm Re\,}  u^n\, T^n  
- \Phi^k \cdot  {\rm Re\,}  u^n\, T_x^n  - \Phi^k \cdot \frac{|u^n|^2}2\, h^n \right)  dtdx,
\end{array}
\right. 
\end{equation}
for all triple of vectors $\Phi^j$, $j=T,h,k$, with components in $C^{\infty}_c (  \R \times I )$.

Moreover, since $\mathcal{T}^n$ is a frame, we have in particular
\begin{equation}\label{AllBounded}   
| T^n (t,x) | = | h^n (t,x) | =| k^n (t,x) |  = 1 
\end{equation}
and using this uniform bound and the one given by assumption \eqref{AssWeak1}, we deduce from 
equation \eqref{OdeInXWeakN} also the bound 
\begin{equation}\label{DerivativeBounded}   
\|  \Phi \cdot \mathcal{T}_x^n \|_{L^{\infty}(I; L^2(\R) )} \leq C(\Phi),
\end{equation}
for all matrices $\Phi$ with entries in $C^\infty_c(I \times \R)$.

We will first show that the sequence $\mathcal{T}^n$ is Cauchy in 
$C (J ; L^2_{loc}(\R))$ for all time intervals~$J \subset \subset I$. To do so we define the defect
 $$\mathcal T^{n,m} = \left( 
\begin{array}{l}
\mathcal T^{n,m}_T \\ \\ 
\mathcal T^{n,m}_h \\   \\
\mathcal T^{n,m}_k
\end{array} 
\right) = \left(
 \begin{array}{l}
T^{n} - T^{m} \\ 
h^{n} - h^{m} \\ 
k^{n} - k^{m}
\end{array} 
\right) =
  \mathcal{T}^{n} - \mathcal{T}^{m}$$ and we will study its variation in space-time. We will use several times in the following the notation $f^{n,m}:=f^n-f^m$. 
 
 Using the linearity of \eqref{OdeInXWeakN} 
 we get:  
 \begin{equation}\label{eq:CauchyInX}
\partial_x \mathcal T^{n,m}   = \Gamma^{m} \mathcal T^{n,m}  +  \Gamma^{n,m}  \mathcal{T}^{n},  
 \end{equation}
 where
 \begin{equation}\label{GammaODEJ}
\Gamma^n:=
\left( 
\begin{array}{lll}
0  & {\rm Re\,} u^n & {\rm Im\,}  u^n 
  \\
  - {\rm Re\,}  u^n  & 0  & 0 
  \\
 - {\rm Im\,}  u^n  &     0 & 0
\end{array}
\right).
\end{equation}

Thus, denoting $  | \mathcal T^{n,m} |^2:=|T^{n,m}|^2+|h^{n,m}|^2+|k^{n,m}|^2$, we arrive to
 $$
\frac12     \partial_x (  | \mathcal T^{n,m} |^2)    
 =   Tr\Big(  \Gamma^{m} \mathcal T^{n,m} \,^t\mathcal T^{n,m}  +   \Gamma^{n,m}  \mathcal{T}^{n} \,^t\mathcal T^{n,m} \Big).
 $$   
 Since $\Gamma^{m}$ is antisymmetric
 we have $$ Tr\Big(  \Gamma^{m} \mathcal T^{n,m} \,^t\mathcal T^{n,m} \Big) =0,$$
 thus 
evaluating at $t_0$ and integrating over $[x_0, x]$, recalling \eqref{AllBounded} and $\mathcal T^n(t_0,x_0)=\mathcal T^m(t_0,x_0)=\mathcal B$, one has 
 \begin{equation}\label{R1DomC}
 | \mathcal T^{n,m} (t_0,x)|^2 \lesssim   \| u^{n,m}(t_0,\cdot) \|_{L^1([x_0, x])}.
 \end{equation}

Now we calculate the time variation of $|\mathcal T^{n,m}|^2$. Using the linearity of \eqref{ODEx} 
 we obtain 
 \begin{equation}\label{OmegaODEWeakDeltaMollified2222}
 \left\{
\begin{array}{lll}
 \int \Phi^T \cdot \partial_t \mathcal T^{n,m}_T \, dx & = & r^T[\Phi^T]  
 +
 \int   \Phi^T \cdot \left(
- {\rm Im\,}  u^{m}_x\, \mathcal T^{n,m}_h +   {\rm Re\,}  u^{m}_x\, \mathcal T^{n,m}_k    
\right) dx,
  \\ && \\
\int \Phi^h \cdot  \partial_t \mathcal T^{n,m}_h \, dx & = &   r^h[\Phi^h] 
+
 \int  \Phi^h \cdot   \Big({\rm Im\,}  u^{m}_x\, \mathcal T^{n,m}_T 
-  \frac{|u^{m}|^2}2 \, \mathcal T^{n,m}_k  \Big) dx,
  \\ && \\
 \int \Phi^k \cdot \partial_t \mathcal T^{n,m}_k \, dx &=&  r^k[\Phi^k] +
  \int   - \Phi^k \cdot   \Big({\rm Re\,}  u^{m}_x\, \mathcal T^{n,m}_T  
 +  \frac{|u^{m}|^2}2 \, \mathcal T^{n,m}_h \Big)  dx,
\end{array}
\right. 
\end{equation}
 for all triple of vectors $\Phi^j$, $j=T,h,k$ and
 $$
 r^T[\Phi^T] :=  \int   \Phi^T \cdot \Big(
- {\rm Im\,} u^{n,m}_x\, h^n  +   {\rm Re\,} u^{n,m}_x  \, k^n      \Big) dx,
 $$
 $$
 r^h[\Phi^h] :=
 \int  \Phi^h \cdot  \Big({\rm Im\,} u^{n,m}_x\, T^n 
-  \frac{(|u|^2)^{n,m}}2\,  k^n  \Big) dx,
 $$
 $$
  r^k[\Phi^k] :=
 \int    \Phi^k \cdot  \Big(- {\rm Re\,} u^{n,m}_x\, T^n 
+  \frac{(|u|^2)^{n,m}}2\, h^n  \Big) dx.
 $$
Integrating by parts we can rewrite the remainders as
 \begin{align}\label{Remainder1}
 r^T[\Phi^T] & := - \int   \Phi^T_x \cdot \Big(
- {\rm Im\,}  u^{n,m}  \, h^n +  {\rm Re\,}  u^{n,m} \, k^n\Big)  - \Phi^T \cdot \Big(
- {\rm Im\,}  u^{n,m} \, \partial_x h^n +   {\rm Re\,}u^{n,m} \,\partial_x k^n   \Big) dx,
 \end{align}
 \begin{align}\label{Remainder2}
 r^h[\Phi^h] 
 & := -
 \int  \Phi^h_x \cdot   {\rm Im\,}  u^{n,m} \, T^n 
+ \Phi^h \cdot \frac{(|u|^2)^{n,m}}2\,k^n   - \Phi^h \cdot  {\rm Im\,} u^{n,m} \, \partial_x T^n dx,
 \end{align}
 \begin{align}\label{Remainder3}
 r^k[\Phi^k] 
 & := 
 \int \Phi^k_x \cdot  {\rm Re\,}   u^{n,m} \,T^n 
+ \Phi^k  \cdot \frac{(|u|^2)^{n,m}}2\, h^n +\Phi^k \cdot   {\rm Re\,}   u^{n,m} \, \partial_x T^n  dx.
\end{align}
 
Testing with $\Phi^j(x) = \eta(x) \mathcal T^{n,m}_j(t,x)$, $j \in\{T,h,k\}$, with 
$\eta \in C^{\infty}_c(\R)$ a bump function of an interval containing $x_0$ 
and summing over $j\in\{T,h,k\}$ we arrive to
\begin{equation}\label{BeforePTLL2Uniq}
\frac12  \partial_t  \int \eta|\mathcal T^{n,m}|^2 dx 
=  \sum_{j=T,h,k}  r^j[\eta \mathcal T^{n,m}_j]   +
\int  Tr\Big(\Omega^{m} \mathcal T^{n,m} \,^t\mathcal T^{n,m} \Big)  \eta\, dx,
\end{equation}
where 
\begin{equation}\label{OmegaODEEpsilon}
\Omega^m:= \left( 
\begin{array}{lll}
0  & - {\rm Im\,}  u^m_x & {\rm Re\,} u^m_x 
  \\
   {\rm Im\,} u^m_x  & 0  & - \frac{|u^m|^2}{2} 
  \\
 - {\rm Re\,}  u^m_x  &     \frac{|u^m|^2}{2}  & 0
\end{array}
\right).
\end{equation}
Since $\Omega^{m}$ is antisymmetric we have 
$$ Tr\Big(\Omega^{m} \mathcal T^{n,m} \,^t\mathcal T^{n,m} \Big)=0,$$
thus
we arrive to 
$$
\frac12  \partial_t  \int \eta|\mathcal T^{n,m}|^2 dx 
=  \sum_{j=T,h,k}  r^j[\eta \mathcal T^{n,m}_j], 
$$
and integrating over $[t_0,t]$ we obtain
$$
 \int \eta(x)|\mathcal T^{n,m}(t,x)|^2 dx
= \int \eta(x)|\mathcal T^{n,m}(t_0,x)|^2 dx   + 2 \sum_{j\in\{T,h,k\}} \int_{t_0}^t r^j[\eta\mathcal T^{n,m}_j] dt.
$$
Plugging \eqref{R1DomC} into this estimate we get
\begin{equation}\label{WeCanPassHere1}
 \int \eta(x)|\mathcal T^{n,m}(t,x)|^2 dx 
\leq C(\eta) \| u^{n,m}(t_0,\cdot)  \|_{L^1(\text{supp}(\eta))} + 2 \sum_{j\in\{T,h,k\}} \int_{t_0}^t r^j[\eta \mathcal T^{n,m}_j] dt.
\end{equation}
By \eqref{AssWeak1}-\eqref{AllBounded}-\eqref{DerivativeBounded} and recalling the form of the remainders 
\eqref{Remainder1}-\eqref{Remainder2}-\eqref{Remainder3} it is clear that
$$
\lim_{n,m \to \infty}  \sup_{t \in J}  \int_{t_0}^t | r^j[\eta \mathcal T^{n,m}_j]| dt  =0, \qquad \forall J \subset \subset I.
$$
Thus, using again \eqref{AssWeak1}, we can pass to the limit into \eqref{WeCanPassHere1} 
$$
\lim_{n,m \to \infty} \sup_{t \in J} \int \eta(x)|\mathcal T^{n,m}(t,x)|^2 dx dt =0,  \qquad \forall J \subset \subset I .
$$

Since $\eta$ was a generic bump function, we have obtained that 
$\mathcal{T}^n$ is a Cauchy sequence wit respect to the norm $L^{\infty}(J; L^2_{loc}(\R))$ for any
interval $J \subset \subset I$.  
We denote with $\mathcal{T}$ its limit.

Now, from \eqref{eq:CauchyInX} that states:
$$\partial_x\mathcal T^{n,m}   = \Gamma^{m} \mathcal T^{n,m}  +  \Gamma^{n,m}  \mathcal{T}^{n},  
$$
 using the fact that
$\mathcal{T}^n$ is Cauchy in $L^{\infty}(J; L^2_{loc}(\R))$, the assumption \eqref{AssWeak1}
and recalling the form \eqref{GammaODEJ} of $\Gamma^n$ 
we deduce that $\mathcal{T}^n$ is Cauchy also in $L^{\infty}(J; W^{1,1}_{loc}(\R))$, 
for all $J \subset \subset I$.
In particular, this implies (by the fundamental theorem of calculus) that 
$\mathcal{T}^n$ converges to $\mathcal{T}$ pointwise uniformly over any space-time compact set 
$K \subset I \times \R$  and that $\mathcal{T}$ is continuous on $I \times \R$. 
 Using this information and again the assumption \eqref{AssWeak1} we deduce 
from \eqref{eq:CauchyInX} that~$\mathcal{T}^n$ is Cauchy also in~$L^{\infty}(J; H^{1}_{loc}(\R))$,
for all $J \subset \subset I$.
  
Moreover, it is now 
clear that we can pass to the limit into (the distributional version of) \eqref{OdeInXWeakN}
 and into \eqref{OmegaODEWeakN} proving also the parts $(i)$ and $(ii)$ of the statement. 
 
 The uniqueness part of the statement is proved by considering the defect $  \mathcal{T}^n - \mathcal{\tilde T}^n$ and proceeding in the same way, with straightforward modifications.
 \end{proof}
 In Theorem \ref{thbf2}, we actually work with solutions of the cubic NLS \eqref{CubicNLS} that almost surely satisfy something 
stronger than \eqref{AssWeak1}. In fact our solutions satisfy 
$\rho_s$-almost surely the assumption \eqref{AssWeak1Reinf} of the following Proposition, 
for all $\alpha < s$. This fact will be proved  in Section \ref{Sec:thbf2}.
 \begin{prop}\label{CorWeak1}
Let $\alpha \in [0,1)$. 
If the assumption \eqref{AssWeak1} is reinforced to 
\begin{equation}\label{AssWeak1Reinf}
u^{n} \to  u \quad \mbox{in} \quad L^{\infty} (J ; C^{\alpha}(\R)), \qquad \forall J \subset \subset I,
\end{equation}
then we have that $\mathcal{T}^{n} \to \mathcal{T}$ in 
$L^{\infty} (J ; C^{1+ \alpha}(\R))$ for any interval $J \subset \subset I$. 
For any other 
sequence $\tilde u^n$ verifying \eqref{AssWeak1Reinf}  we have that the relative 
family of solutions~$\mathcal{\tilde T}^n$
converges to $\mathcal{T}$ 
in
$L^{\infty} (J ; C^{1+ \alpha}(\R))$ for any interval $J \subset \subset I$ as well. 
\end{prop}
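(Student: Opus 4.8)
The plan is to re-run the argument of Theorem~\ref{ThmWeak1}, but to bootstrap in the Hölder scale, now exploiting that the coefficients are uniformly bounded and convergent in $C^\alpha$. Throughout I write $\mathcal{T}^{n,m}=\mathcal{T}^n-\mathcal{T}^m$, $u^{n,m}=u^n-u^m$, $\Gamma^{n,m}=\Gamma^n-\Gamma^m$, and I fix $J\subset\subset I$. From Theorem~\ref{ThmWeak1} I already have $\mathcal{T}^n\to\mathcal{T}$ in $C(J;H^1_{loc}(\R))$, so the only task is to upgrade this convergence to $L^\infty(J;C^{1+\alpha}(\R))$.

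\textbf{Step 1: uniform $C^{1+\alpha}$ bounds.} First I would record that, each $\mathcal{T}^n$ being an orthonormal frame, $|T^n|=|h^n|=|k^n|\equiv 1$, so $\{\mathcal{T}^n\}$ is bounded in $L^\infty(I\times\R)$. Since the entries of $\Gamma^n$ in~\eqref{GammaODEJ} are linear in $u^n$, the reinforced assumption~\eqref{AssWeak1Reinf} gives $\sup_n\|\Gamma^n\|_{L^\infty(J;C^\alpha(\R))}<\infty$. Reading the space ODE~\eqref{OdeInXWeakN} as $\partial_x\mathcal{T}^n=\Gamma^n\mathcal{T}^n$, I obtain first that $\partial_x\mathcal{T}^n$ is bounded in $L^\infty(J;L^\infty(\R))$, hence that $\mathcal{T}^n$ is uniformly Lipschitz in $x$ and in particular bounded in $L^\infty(J;C^\alpha(\R))$; feeding this back into $\partial_x\mathcal{T}^n=\Gamma^n\mathcal{T}^n$ and using that $C^\alpha(\R)$ is a Banach algebra, $\partial_x\mathcal{T}^n$ is bounded in $L^\infty(J;C^\alpha(\R))$, i.e. $\{\mathcal{T}^n\}$ is bounded in $L^\infty(J;C^{1+\alpha}(\R))$, uniformly in $n$.

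\textbf{Step 2: upgrading the convergence.} From Theorem~\ref{ThmWeak1} and the one-dimensional embedding $H^1_{loc}\hookrightarrow C^0_{loc}$ I have $\mathcal{T}^{n,m}\to 0$ in $C(J;L^\infty_{loc}(\R))$. Combining this with the uniform Lipschitz bound of Step~1 through the elementary inequality $[f]_{C^\alpha(K)}\leq 2^{1-\alpha}\,\|f\|_{L^\infty(K)}^{1-\alpha}\,[f]_{C^{0,1}(K)}^{\alpha}$, I get $\mathcal{T}^{n,m}\to 0$ in $C(J;C^\alpha_{loc}(\R))$. Then I would use the difference equation~\eqref{eq:CauchyInX}, $\partial_x\mathcal{T}^{n,m}=\Gamma^{m}\mathcal{T}^{n,m}+\Gamma^{n,m}\mathcal{T}^n$, and estimate each term in $C^\alpha$ on compact $x$-intervals: $\Gamma^m$ is bounded in $C^\alpha$ while $\mathcal{T}^{n,m}\to0$ in $C^\alpha_{loc}$, so the first term tends to $0$; $\Gamma^{n,m}$ is linear in $u^{n,m}$, which tends to $0$ in $C^\alpha$ by~\eqref{AssWeak1Reinf} (both $u^n$ and $u^m$ tend to $u$), and $\mathcal{T}^n$ is bounded in $C^\alpha$ by Step~1, so the second term tends to $0$ as well. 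Hence $\partial_x\mathcal{T}^{n,m}\to0$ in $C(J;C^\alpha_{loc}(\R))$, which together with the $C^\alpha_{loc}$-convergence of $\mathcal{T}^{n,m}$ gives $\mathcal{T}^{n,m}\to0$ in $C(J;C^{1+\alpha}_{loc}(\R))$; the passage to the uniform-in-$x$ statement on compact $x$-intervals is handled exactly as in Theorem~\ref{ThmWeak1} via~\eqref{R1DomC} propagated in time, yielding $\mathcal{T}^n\to\mathcal{T}$ in $L^\infty(J;C^{1+\alpha}(\R))$. Finally, the independence of the limit on the approximating sequence follows by running the same two-step bootstrap for $\mathcal{T}^n-\tilde{\mathcal T}^n$: both sequences are uniformly bounded in $C^{1+\alpha}$ by Step~1, $\mathcal{T}^n-\tilde{\mathcal T}^n\to0$ in $C(J;H^1_{loc})$ by Theorem~\ref{ThmWeak1}, and $u^n-\tilde u^n\to0$ in $C^\alpha$, so the argument goes through verbatim.

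\textbf{Expected main difficulty.} The delicate point is the order in which the estimates are closed: the $C^\alpha$ bound for $\partial_x\mathcal{T}^{n,m}$ would be circular if I tried to deduce from it the $C^\alpha$-convergence of $\mathcal{T}^{n,m}$, so I must first extract that convergence from the already-available $H^1_{loc}$ information of Theorem~\ref{ThmWeak1} interpolated against the new uniform $C^{1+\alpha}$ bound of Step~1, after which only one honest ODE estimate remains. The rest — the Hölder product estimates (the Banach algebra property of $C^\alpha$) and carrying the $x$-localization through, since~\eqref{R1DomC} only controls $\mathcal{T}^{n,m}$ on a bounded $x$-interval in terms of the $L^1$-norm of $u^{n,m}$ there — is routine and parallels the corresponding steps in the proof of Theorem~\ref{ThmWeak1}.
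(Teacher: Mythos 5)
Your argument is correct and is essentially the paper's own proof: a H\"older bootstrap through the difference equation \eqref{eq:CauchyInX}, the only (cosmetic) difference being that you obtain the intermediate $C^\alpha$ convergence of $\mathcal{T}^{n}-\mathcal{T}^{m}$ by interpolating the $H^1_{loc}$ convergence from Theorem \ref{ThmWeak1} against your uniform Lipschitz bound, whereas the paper produces it by a first pass through the same equation (giving $C^1$ convergence) before the $C^\alpha$ pass. Note only that, as in the paper's equally terse treatment, your estimates are genuinely local-uniform in $x$, so the jump to the global $C^{1+\alpha}(\R)$ norm in your last sentence is asserted rather than proved --- which is all that the subsequent application in Theorem \ref{MainThhBinormal} actually needs.
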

\begin{proof}
Under the stronger assumption \eqref{AssWeak1Reinf}, we first deduce from~\eqref{eq:CauchyInX} the convergence 
 of $\mathcal{T}^n$ to~$\mathcal{T}$
 in $L^{\infty} (J ; C^{1}(\R))$. Then, using this extra information and \eqref{AssWeak1Reinf} again, 
 we deduce from 
 \eqref{eq:CauchyInX} the convergence 
 of $\mathcal{T}^n$ to~$\mathcal{T}$ in 
 in $L^{\infty} (J ; C^{1 + \alpha}(\R))$, as claimed
 
 The uniqueness part of the statement is proved considering 
 the defect $  \mathcal{T}^n - \mathcal{\tilde T}^n$ and 
 reasoning similarly. 
\end{proof}
 We are now ready to prove the main result of this section. 
 \begin{theorem}\label{MainThhBinormal}
Under the assumptions of Theorem \ref{ThmWeak1} 
the tangent vector 
$T$ is a weak solution of the Schr\"odinger map, namely 
\begin{equation}\label{WeakSM}
\int T \cdot \Phi_t = \int  ( T \wedge  T_x) \cdot \Phi_x , \qquad\forall \Phi\in(C^{\infty}_c (I\times \R  ))^3.
\end{equation}
Moreover, there exists a unique (modulo translation) weak solution $\chi \in C(I; H^2_{loc}(\R))$ of the binormal flow equation,
namely
\begin{equation}\label{eq:WeakBinInTHM}
- \int \chi \cdot \Phi_t   = \int ( \chi_x \wedge \chi_{xx} ) \cdot \Phi, \qquad  
\chi_x = T,\qquad\forall \Phi\in(C^{\infty}_c (I\times \R  ))^3.
\end{equation}
Also, $u(t)$ is a filament function of $\chi(t)$. 

Under the (stronger) assumption \eqref{AssWeak1Reinf} of Proposition \ref{CorWeak1} we have that 
$\chi \in C^{1}(J; C^{\alpha}(\R)) \cap C(J; C^{2+\alpha}(\R))$ for 
all intervals $J \subset \subset I$ and solves classically
 \begin{equation}\label{eq:StrongBinInTHM}
\chi_t   =  \chi_x \wedge \chi_{xx} , \qquad  
\chi_x = T,\qquad\forall (t,x) \in I\times \R .
\end{equation}

\end{theorem}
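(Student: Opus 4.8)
The plan is to derive the weak Schrödinger map equation \eqref{WeakSM} by passing to the limit in the exact identity satisfied by the smooth frames $\mathcal T^n$. Since each $u^n$ is a smooth solution of \eqref{CubicNLS}, the corresponding $T^n$ solves the Schrödinger map classically, so that $\int T^n\cdot\Phi_t=\int(T^n\wedge T^n_x)\cdot\Phi_x$ for all test vectors $\Phi$. By Theorem \ref{ThmWeak1} we have $T^n\to T$ in $C(J;H^1_{loc})$ and $u^n\to u$ in $L^\infty(I;L^2)$ locally; combining \eqref{OdeInXWeakN} with these convergences (and the uniform bounds \eqref{AllBounded}) shows $T^n\wedge T^n_x\to T\wedge T_x$ in $L^\infty_{loc}(I;L^1_{loc})$, which is enough to pass to the limit on the right-hand side; the left-hand side passes trivially. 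This gives \eqref{WeakSM}.

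Next I would construct $\chi$. The natural candidate is the limit of the approximating curves $\chi^n$ defined by \eqref{ChiDefinitionVeryFirst} with $u,T$ replaced by $u^n,T^n$, i.e.
\begin{equation}\label{ChiLimitDef}
\chi(t,x):=P+\int_{t_0}^t(T\wedge T_x)(s,x_0)\,ds+\int_{x_0}^x T(t,y)\,dy.
\end{equation}
One must check that this is well-defined: by Theorem \ref{ThmWeak1}, $(T\wedge T_x)(s,x_0)$ is continuous in $s$ (since $\mathcal T^n$ converges in $C(J;H^1_{loc})$ and hence, via \eqref{fdjksldkjgdjsn}, $T_x$ is continuous in time in an appropriate local sense), and $T(t,\cdot)\in H^1_{loc}$ so $\int_{x_0}^x T(t,y)\,dy$ makes sense and lies in $H^2_{loc}$. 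Then $\chi\in C(I;H^2_{loc})$ with $\chi_x=T$, $\chi_{xx}=T_x={\rm Re\,}u\,h+{\rm Im\,}u\,k$ by part $(i)$ of Theorem \ref{ThmWeak1}. To verify the weak binormal flow equation \eqref{eq:WeakBinInTHM}, I would write down that $\chi^n$ solves \eqref{BinormalVeryFirst} classically (so $-\int\chi^n\cdot\Phi_t=\int(\chi^n_x\wedge\chi^n_{xx})\cdot\Phi=\int(T^n\wedge T^n_x)\cdot\Phi$) and pass to the limit: $\chi^n\to\chi$ locally uniformly (from the convergence of the pieces in \eqref{ChiLimitDef}) handles the left side, and $T^n\wedge T^n_x\to T\wedge T_x$ in $L^1_{loc}$ handles the right side. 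The identification $\chi_x\wedge\chi_{xx}=T\wedge T_x$ together with $\chi_x=T$ is immediate. That $u(t)$ is a filament function of $\chi(t)$ follows because $(T,h,k)=\mathcal T$ is by construction a parallel frame with $T_x={\rm Re\,}u\,h+{\rm Im\,}u\,k$, matching the definition in \S\ref{sectfil}.

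For uniqueness modulo translation I would argue as follows: if $\tilde\chi\in C(I;H^2_{loc})$ is another weak solution with $\tilde\chi_x=T$, then $\chi-\tilde\chi$ has zero $x$-derivative, hence $\chi(t,x)-\tilde\chi(t,x)=c(t)$ depends only on $t$; testing the weak formulations \eqref{eq:WeakBinInTHM} for $\chi$ and $\tilde\chi$ against $\Phi$ and subtracting gives $\int c(t)\cdot\Phi_t\,dt\,dx=0$ for all $\Phi$, forcing $c(t)$ to be constant, which is exactly translation invariance. Finally, under the stronger hypothesis \eqref{AssWeak1Reinf}, Proposition \ref{CorWeak1} upgrades the convergence to $\mathcal T^n\to\mathcal T$ in $L^\infty(J;C^{1+\alpha})$; then $\chi_{xx}=T_x={\rm Re\,}u\,h+{\rm Im\,}u\,k\in C^\alpha$ locally uniformly in $t$, so $\chi\in C(J;C^{2+\alpha})$, and $\chi_t=T\wedge T_x$ is continuous (from \eqref{chit} and the continuity of $\mathcal T$ and $u$) giving $\chi\in C^1(J;C^\alpha)$ and the classical equation \eqref{eq:StrongBinInTHM}. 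The main obstacle I anticipate is the rigorous justification that the time-boundary term $\int_{t_0}^t(T\wedge T_x)(s,x_0)\,ds$ in \eqref{ChiLimitDef} is meaningful and converges — i.e.\ controlling $T^n_x$ at the single point $x=x_0$ uniformly in time — which requires care since $H^1_{loc}$ convergence alone does not give pointwise control; one resolves this by noting $T^n_x(t,x_0)=-({\rm Im\,}u^n\,h^n+{\rm Re\,}u^n\,k^n)$ is controlled in $L^\infty_t$ by $|u^n(t,x_0)|$ together with, alternatively, working with the averaged quantity $\int_{x_0}^{x_0+1}(T\wedge T_x)(s,y)\,dy$ and absorbing the discrepancy into the translation ambiguity.
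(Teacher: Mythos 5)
Your overall route is the same as the paper's (pass to the limit in the frame identities for the weak Schr\"odinger map, define $\chi$ by the integral formula, pass to the limit for the weak binormal equation, then upgrade under \eqref{AssWeak1Reinf} via Proposition \ref{CorWeak1}), and your uniqueness-modulo-translation argument (subtracting the two weak formulations for curves with the same tangent $T$, so that the difference $c(t)$ has vanishing weak time derivative) is correct and in fact more direct than the paper's. However, there is a genuine gap exactly at the point you flag: the term $\int_{t_0}^t (T\wedge T_x)(s,x_0)\,ds$ based at the \emph{fixed} point $x_0$. Under the hypothesis \eqref{AssWeak1} alone, $u$ is only in $L^1_{loc}$ and $T_x(t,\cdot)\in L^2_{loc}$, so $(T\wedge T_x)(s,x_0)$ at a prescribed $x_0$ need not even be defined, and neither of your two fixes closes this. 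The first fails because $|T^n_x(t,x_0)|\le |u^n(t,x_0)|$ gives no uniform or convergent control: pointwise-in-$x$ values of $u^n$ are not controlled by the $L^\infty_t L^2_{x,loc}$ convergence, and even a uniform bound would not give convergence of $\int_{t_0}^t(T^n\wedge T^n_x)(s,x_0)\,ds$. The second (averaging in the base point and ``absorbing into the translation ambiguity'') presupposes that changing the base point only translates the curve, which is itself a nontrivial fact that must be proved before it can be used.

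The paper resolves this as follows. From $T^n\to T$ in $C(J;H^1_{loc})$ one gets
\[
\int_{I_1}\Big(\int_J \big|(T\wedge T_x)-(T^n\wedge T^n_x)\big|(\tau,y)\,d\tau\Big)dy\;\longrightarrow\;0
\]
for every bounded interval $I_1$, so along a subsequence the inner time integral tends to zero for almost every $y$; one then chooses an admissible base point $y_0\in[0,1]$ in this full-measure set and defines $\chi_{y_0}$ there, which makes the time-boundary term meaningful and convergent. Finally, the independence of the construction from the choice of $y_0$ (up to a constant spatial translation, uniform in $t$) is proved by a separate computation that uses the fact that the smooth approximations $T^{n_k}$ solve the Schr\"odinger map classically: the difference of the time integrals at two base points is rewritten as $-\int_{t_0}^t\int_{y_0}^{z_0}\partial_\tau T^{n_k}\,dy\,d\tau$, which cancels against the difference of the space integrals. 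Your proposal needs this a.e.\ selection of the base point together with the translation-equivalence argument to be complete; as written, the existence part of the construction does not go through.
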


\begin{proof}

Proceeding as above  (see \eqref{eq:SM}) we know that
$$
\int  T^n \cdot \Phi_t  = \int (T^n \wedge  T^n_{x}) \cdot \Phi_x,
$$
for all of vectors $\Phi$ with components in  $C^{\infty}_c (I\times\R  )$.
Theorem \eqref{ThmWeak1} ensures that 
\begin{equation}\label{RecH1}
T^n \to T \quad 
\mbox{ 
in
$C (J ; H^{1}_{loc}(\R))$ for all intervals $J \subset \subset I$} 
\end{equation} 
so we can pass to the limit in the previous equation and obtain \eqref{WeakSM}.

To construct weak solutions of the binormal flow equation we will need some extra-information on $T^n$.
From \eqref{RecH1} it follows that 
$$
\int_{I_1}  \left( \int_J \left| (T \wedge T_{x})(\tau,x) - ( T^n\wedge T^n_{x})(\tau,x) \right| d\tau \right)  d x\overset{n\rightarrow\infty}{\longrightarrow}0,
$$
where $I_1$ is any (space) bounded interval of $\R$. 
Thus the sequence of functions
$$
x \in I_1 \to   
\int_J \left| (T \wedge T_{x})(\tau,x) - ( T^n\wedge T^n_{x})(\tau,x) \right| d\tau
$$
converge to zero in $L^{1}(I_1)$.  
This implies, by a diagonalization process involving the intervals $I_1+m$ for $m\in\mathbb Z$, that we can extract a subsequence that converges pointwise to zero for almost every $y_0 \in \R$:
\begin{equation}\label{cvx0}
\int_J \left| (T \wedge T_{x})(\tau,y_0) - ( T^{n_k}\wedge T^{n_k}_{x})(\tau,y_0) \right| d\tau \rightarrow 0.
\end{equation}
We choose such $y_0\in[0,1]$.

Now we define for $P \in \R^3$ the curves
\begin{equation}\label{Def:CurveN}
\chi^n_{y_0}( t,x) := P + \int_{t_0}^{t} (T^n \wedge T^n_{x})(\tau,y_0) d\tau + \int_{y_0}^x  T^n(t,y) dy,
\end{equation}
that are smooth binormal flow solutions with filament function $u_n$. 
Using \eqref{cvx0} and \eqref{RecH1} it follows that $\chi^{n_k}$ converges pointwise to 
\begin{equation}\label{Def:Curve}
\chi_{y_0}(t,x) := P + \int_{t_0}^{t} (T \wedge T_{x})(\tau,y_0) d\tau + \int_{y_0}^x  T(t,y) dy,
\end{equation}
and also in $L^{\infty}(J;H^2_{loc}(\R))$ for all intervals $J \subset \subset I$. Eventually we shall see that choosing another $y_0$ induces only a translation in space, so for the moment we shall drop the subindex $y_0$ to lighten the presentation.

Now, taking the $\partial_x$ derivative of \eqref{Def:CurveN} we have 
\begin{equation}\label{CurvePass1}
\partial_x\chi^n = T^n,
\end{equation} 
so \eqref{RecH1} implies that $\chi_x=T$. From \eqref{fdjksldkjgdjsn} it follows that $u(t)$ is the filament function of $\chi(t)$. 

To obtain \eqref{eq:WeakBinInTHM} we use the facts that $\chi^{n_k}$ converges to $\chi$, that $\chi^{n_k}$ is a smooth solution of the binormal flow and the convergence of the tangent vectors \eqref{RecH1}:
$$- \int \chi \cdot \Phi_t  =-\lim_{k\rightarrow\infty}\int \chi^{n_k} \cdot \Phi_t =\lim_{k\rightarrow\infty} \int \chi^{n_k}_t \cdot \Phi$$
$$=\lim_{k\rightarrow\infty} \int \chi^{n_k}_t \cdot \Phi =\lim_{k\rightarrow\infty} \int  (T^{n_k}\wedge T^{n_k}_{x}) \cdot \Phi= \int  (T\wedge T_{x}) \cdot \Phi= \int ( \chi_x \wedge \chi_{xx} ) \cdot \Phi.$$

Under the stronger assumption \eqref{AssWeak1Reinf} we get from Proposition \ref{CorWeak1} that $T^{n} \to   T$ in 
$L^{\infty} (J ; C^{1+ \alpha}(\R))$ for any interval $J \subset \subset I$.
Therefore the weak derivative in time of $\chi$ is at least a continuous function (again even in the case $\alpha =0$)
and
\eqref{eq:StrongBinInTHM} actually holds pointwise. 
Thus we have obtained
$$\chi_t = T \wedge T_{x} \in C(J; C^{\alpha}(\R))$$ 
and $$\chi_x = T \in C(J; C^{1+\alpha}(\R))$$ 
for all compact 
intervals~$J \subset \subset I$. 

Now we get back to the uniqueness modulo translation. If we choose $z_0\neq y_0$ such that \eqref{cvx0} is satisfied then we see that $\chi_{y_0}$ is a translation in space of $\chi_{z_0}$: 
$$\chi_{y_0}(t,x)=\chi_{z_0}(t,x)+\chi_{y_0}(t_0,z_0)-\chi_{y_0}(t_0,y_0).$$
Indeed, in view of \eqref{cvx0}, \eqref{RecH1} and the fact that $T^{n_k}$ is a smooth solution of the Schr\"odinger map, we have
$$\chi_{y_0}(t,x)-\chi_{z_0}(t,x)-\chi_{y_0}(t_0,z_0)+\chi_{y_0}(t_0,y_0)$$
$$=\int_{t_0}^{t} \Big((T \wedge T_{x})(\tau,y_0)-(T \wedge T_{x})(\tau,z_0)\Big) d\tau + \int_{y_0}^{z_0}  T(t,y)-T(t_0,y) dy$$
$$=\lim_{k\rightarrow\infty} \int_{t_0}^{t} \Big((T^{n_k} \wedge T^{n_k}_{x})(\tau,y_0)-(T^{n_k} \wedge T^{n_k}_{x})(\tau,z_0)\Big) d\tau + \int_{y_0}^{z_0}  T^{n_k}(t,y)-T^{n_k}(t_0,y) dy$$
$$=\lim_{k\rightarrow\infty} -\int_{t_0}^{t} \int_{y_0}^{z_0}(T^{n_k} \wedge T^{n_k}_{xx})(\tau,y)dyd\tau + \int_{y_0}^{z_0}  T^{n_k}(t,y)-T^{n_k}(t_0,y) dy$$
$$=\lim_{k\rightarrow\infty} -\int_{t_0}^{t} \int_{y_0}^{z_0}\partial_\tau T^{n_k}(\tau,y)dyd\tau + \int_{y_0}^{z_0}  T^{n_k}(t,y)-T^{n_k}(t_0,y) dy=0.$$
Thus we obtain the same curve modulo translation by choosing any $z_0$ such that \eqref{cvx0} is satisfied, which is valid almost everywhere in $\mathbb R$. Noting that the definition of $\chi_{y_0}$ depends only on $y_0$ and on $T$ which is the limit of $T^n$, this implies also that $\chi_{y_0}$ is unique modulo translation as limit of the whole sequence $\chi_{y_0}^n$. 
This completes the proof.
\end{proof}
\section{Proof of Theorem \ref{thbf}}\label{sect-thdet}
Since $\chi_1\in\Omega_1^s$, its filament function $u_1$ is such that $e^{-i\frac{x^2}{4}}u_1(x)$ is $4\pi$-periodic and belongs to $H^{s}([0,4\pi])$. Let $\{A_j(1)\}$ be the sequence defined by
$u_1=\sum_{j} A_j(1)e^{i\frac{(x-j)^2}{4}}$ and let $u$ be the solution of \eqref{CubicNLS} with $u(1)=u_1$ given by Theorem \ref{wbl}.
\\

Let $\sigma>\frac 32$. We consider an approximating  sequence of curves $\chi^n_1\in\Omega_1^{\sigma}$ for $\chi_1$, that is 
$$d_{\Omega_1^s}( \chi_1^n,\chi_1)=\| \chi_1^n-\chi_1\|_{L^\infty([0,1])}+\| \partial_x\chi_1^n-\partial_x\chi_1\|_{L^\infty([0,1])}+\|e^{-i\frac{x^2}{4}}( u_1^n-u_1)\|_{H^{s}_{per}([0,4\pi ])}\overset{n\rightarrow\infty}{\rightarrow} 0,$$ 
where $u_1^n$ is a filament function of $\chi_1^n$. Since $ \chi_1^n\in\Omega_1^\sigma$ we have that $e^{-i\frac{x^2}{4}} u_1^n(x)$ is $4\pi$-periodic and belongs to $H^{\frac 32^+}([0,4\pi])$. As $e^{-i\frac{x^2}{4}}u^n_1$ converges to $e^{-i\frac{x^2}{4}}u_1$ in $H^s([0,4\pi])$, it follows that  the sequence $\{A_j^n\}\in l^{2,\frac 32^+}$ defined by $u^n_1=\sum_{j} A_j^ne^{i\frac{(x-j)^2}{4}}$ satisfies
\begin{equation}\label{cvcoefs}
\{A_j^n\}\overset{n\rightarrow\infty}{\underset{l^{2,s}_j}{\longrightarrow}}\{A_j(1)\}.
\end{equation}
The existence of such sequences is ensured by the example $A_j^n:=A_j(1)\mathds{1}_{j\leq n}$. 
\\

Let $u^n$ be the solution of \eqref{CubicNLS} with $u^n(1)=u^n_1$ given by Theorem \ref{wbl}, and let 
$$\tilde\chi^n(t):=\Psi_{1,t}(\chi_1^n),$$ 
i.e. the corresponding strong solutions of the binormal flow with $\tilde\chi^n(1)=\chi^n_1$, whose construction is detailed in \S\ref{sectHas}, that have $ u^n(t)$ as filament function at time $t$ and satisfies $ \tilde\chi^n(t_0,x_0)=P, \tilde T^n(t_0,x_0)=\mathcal B$. We denote $v,v^n$ the pseudo-conformal transformations of $u,u^n$ respectively, see formula \eqref{Eq:Pseudoconf}. Using \eqref{contv} and \eqref{cvcoefs} we obtain for any $J\subset\subset (0,1]$
\begin{equation}\label{estdist}
\sup_{t\in J}\| e^{-i\frac{x^2}{4t}}(u^n(t)-u(t))\|_{H^{s}_{per}([0,4\pi t])}=\sup_{t\in 1/J}\|v^n(t)-v(t)\|_{H^{s}}
\end{equation}
$$\leq C(J, \|v(1) \|_{H^{s}}) \|  v^n_1-v_1  \|_{H^{s}}=C(J, \|A_j(1)  \|_{l^{2,s}_j}) \| A^n_j(1)-A_j(1)  \|_{l^{2,s}_j}\overset{n\rightarrow \infty}{\longrightarrow}0,
$$
In particular, this yields 
$$
\int  \phi | u^{n} - u|^2  dxdt
\overset{n\rightarrow \infty}{\longrightarrow}0,
$$
for all $\phi \in C^{\infty}_c( (0,1] \times \R)$. In view of Theorems \ref{ThmWeak1}-\ref{MainThhBinormal} it follows that there exists $\chi$ a weak solution of the binormal flow \eqref{eq:WeakBinInTHM} with $\chi(t)$ having $u(t)$ as filament function. Moreover, in view of the proof of Theorem \ref{MainThhBinormal}, $\chi$ is approximated, pointwise and at the level of the tangent vector, by binormal flow solutions $\chi^n$ with $\chi^n(t)$ having filament function $u^n(t)$. Also from the proof of Theorems \ref{MainThhBinormal} we see that $\tilde\chi^n(t)$ and $\chi^n(t)$ have same tangent vector, thus \eqref{RecH1} yields
$$\|\partial_x\tilde \chi^n(t)-\partial_x\chi(t)\|_{L^\infty(0,1)}\overset{n\to\infty}{\longrightarrow}0.$$
Also, in view of the definitions \eqref{ChiDefinitionVeryFirst} and \eqref{Def:CurveN} of $\tilde\chi^n(t)$ and $\chi^n(t)$ respectively, and of the remark in \eqref{translationchi}, it follows that $\tilde \chi^n(t)$ is a translation $\mathfrak T^n$ of $\chi^n(t)$, independently on $t$. Since $\chi_1^n$ is approximating $\chi_1$ in $\Omega_1^s$, 
$$\|\mathfrak T^n  \chi^n(1)-\chi_1\|_{L^\infty([0,1])}=\| \tilde \chi^n(1)-\chi_1\|_{L^\infty([0,1])}=\| \chi_1^n-\chi_1\|_{L^\infty([0,1])}\overset{n\to\infty}{\longrightarrow}0,$$
and since $\chi^n(1)$ is pointwise converging, it follows that the translation factor of $\mathfrak T^n $ converges to zero. Therefore, using also that $\chi^n(t)$ is pointwise converging to $\chi(t)$ we obtain
$$\| \tilde \chi^n(t)-\chi(t)\|_{L^\infty([0,1])}=\|\mathfrak T^n  \chi^n(t)-\chi(t)\|_{L^\infty([0,1])}\overset{n\to\infty}{\longrightarrow}0.$$
Combining with \eqref{estdist} we have 
$$d_{\Omega_t^s}(\chi^n(t),\chi(t))\overset{n\to\infty}{\longrightarrow}0.$$ 
Moreover, the uniqueness of this extension of the Hasimoto solution map \eqref{HasimotoFlowForUs} is a consequence of fact that Theorem \ref{ThmWeak1} is independent of the approximating sequence $u^n$, see the last paragraph of the statement. 
\\

To conclude the proof of Theorem \ref{thbf} we only must show that the curve $\chi(t)$ converges, 
uniformly w.r.t. $x \in \R$, as $t \to 0$. Let $0<\delta<1$. Form \eqref{RecH1} and definitions \eqref{Def:CurveN}-\eqref{Def:Curve} of $\chi$ we obtain that $\partial_t\chi^n(t)$ converges in the sense of the distributions $\mathcal D'([\delta,1]\times\R)$ to $\partial_t\chi\in L^1_{loc}([\delta, 1],\mathcal D'(\R,\R^3))$. Thus for $\phi \in C^{\infty}_c( [\delta,1] \times \R)$ we can use Fubini to get
$$\int \int \chi(\tau,x) \phi(\tau,x)d\tau dx=-\int \int \partial_\tau\chi(\tau,x) \Big(\int_\delta^\tau \phi(t,x)dt\Big)d\tau dx$$
$$=-\int\int\Big(\int_t^1 \partial_\tau\chi(\tau,x)d\tau\Big)\phi(t,x)dtdx=-\lim _n\int\int\Big(\int_t^1 \partial_\tau\chi^n(\tau,x)d\tau\Big)\phi(t,x)dtdx$$
$$=\lim _n\int \int \partial_\tau \chi^n(\tau,x) \phi(\tau,x)d\tau dx.$$
Thus for $\delta< t_1 < t_2  \leq 1$ we have
\begin{equation}\label{limitchit}
\chi(t_2, x)  -  \chi(t_1, x) =\int_{t_1}^{t_2}\partial_\tau\chi(\tau,x)d\tau=\lim _n\int_{t_1}^{t_2}\partial_\tau\chi^n(\tau,x)d\tau
\end{equation}
in the sense of distributions. Since $\chi^n$ are smooth and $\chi\in\mathcal C([\delta,1],H^2_{loc}(\R,\R^3))\subset \mathcal C([\delta,1]\times\R)$ we have the equality pointwise. 
Using the pseudo-conformal transformation \eqref{Eq:Pseudoconf}, changing variable $s= \frac{1}{t}$
 in the integral and using \eqref{chit} together with the Sobolev embedding $W^{\frac 16^+,6}\subset L^\infty$ we get
\begin{equation}\label{limitchitbis}
\Big | \int_{t_1}^{t_2}\partial_\tau\chi^n(\tau,x)d\tau \Big | \leq \int_{t_1}^{t_2}|u^n(\tau,x)|d\tau =  \int_{t_1}^{t_2}  
  \frac{1}{\sqrt{s}} \left| v^n \left( \frac{1}{s}, \frac{x}{s} \right) \right| ds 
  \end{equation}
$$ = 
 \int_{1/t_2}^{1/t_1}  t^{-\frac{3}{2}}  | v^n \left( t, t x \right) | dt
\leq \int_{1/t_2}^{1/t_1}  t^{-\frac{3}{2}}  \|  v^n(t, \cdot)\|_{W^{\frac 16^+,6}(\T)} dt.
$$
We cover the interval $[1/t_2, 1/t_1]$ by a finite number of disjoint unit subintervals $I_k=[k,k+1]$ and we can then bound, for $s >1/6$, uniformly with respect to $n\in\mathbb N$ and $ x\in\mathbb R$
$$
\Big | \int_{t_1}^{t_2}\partial_\tau\chi^n(\tau,x)d\tau \Big | \leq  \sum_{k\gtrsim 1/t_2} \frac{1}{k^{3/2}} \|  \tilde v^n\|_{L^6( I_k; W^{\frac 16^+,6}(\T))}
$$ $$\leq C(\|  v^n(1)\|_{L^2}) \sum_{k\gtrsim 1/t_2} \frac{1}{(k+1)^{3/2}} \|   v^n(k)\|_{H^s} \leq C(\| e^{-i\frac{\cdot^2}4}u_1\|_{H^s(0,4\pi)}) \sqrt{t_2},$$
where we used the Strichartz inequality \eqref{Str2}, and then \eqref{est2s} to control
$$\|\{A_j(k)\}\|_{l^{2,s}}=\| v^n(k)\|_{H^s} \leq C(\| v^n(1)\|_{L^2}) \|    v^n(1)\|_{H^s}\leq C(\|  e^{-i\frac{\cdot^2}4}u_1\|_{H^s(0,4\pi)}).$$
Summarizing, we have obtained pointwise, due to dominated convergence in \eqref{limitchitbis},
$$|\chi(t_2, x)  -  \chi(t_1, x)|\leq C(\| e^{-i\frac{\cdot^2}4} u_1\|_{H^s(0,4\pi)}) \sqrt{t_2}.$$
Making $t_2$ go to zero we get that the curve $\chi(t)$ converges, 
uniformly w.r.t. $x \in \R$, as $t \to 0$. This completes the proof of Theorem \ref{thbf}.  
\section{Quasi-invariance}\label{Sec:QuasiInvariance}
In this section we prove the first part of Theorem \ref{MainThm3}, in particular the inequality  \eqref{QuasiInvFin}. To do so,  we also introduce the probabilistic tools needed in order to prove Theorem~\ref{thbf2}. 
\subsubsection*{The reference measure}
Let $g^{\omega}_k$ be a sequence of independent standard complex Gaussians, i.e.
\begin{equation}\label{DefComplexGaussians}
g^{\omega}_k = \frac{1}{\sqrt{2}}(h^{\omega}_k+i l^{\omega}_k), \qquad k\in\mathbb{Z},
\end{equation}
with 
$h^{\omega}_k, l^{\omega}_k$ independent   
standard real Gaussians. This means that $h^{\omega}_k, l^{\omega}_k \sim \mathcal{N}(0,1)$ and are independent. 
In particular 
$$\mathbb{E}(h_k^{\omega}) = \mathbb{E}(l_k^{\omega}) =0$$ 
and 
$$
\mathbb{E}(h_j^{\omega}l_k^{\omega}) = 0, \qquad \mathbb{E}(l_k^{\omega} l_j^{\omega}) = \mathbb{E}(h_k^{\omega} h_j^{\omega}) = \delta_{kj},
$$ 
where $\mathbb{E}(\cdot)$ denotes the expectation and $\delta_{kj}$ the Kronecker delta. 
\\

Let  $\rho_s$  be the measure 
\begin{equation}\label{Def:Rho(1)}
d \rho_s (v)= 1_{\big\{ \| v \|^2_{L^{2}(\mathbb{T})} \leq M \big\}} (v)d \gamma_s(v), \qquad M>0,
\end{equation}
where the Gaussian measure~$\gamma_s$ is the one induced on 
$H^{s'}(\T)$, $s' < s$, by the random variable
\begin{equation}\label{TMIATS}
\omega \longmapsto   \sum_{k \in \mathbb{Z}} 
\frac{g_k^{\omega}  }{(1 + |k|^{2s+1})^{\frac12} }  e^{i x k}.
\end{equation}
The measures  $\gamma_s$ is a probability measure on the Sobolev space  $H^{s'}(\T)$ for every $s' < s$. 
\\

Ou aim is  to show the quasi-invariance of $ \rho_s$ under the solution map $\Phi_{1,\tau}$, $\tau \in [1, \infty)$, associated with the equation \eqref{NLSv}, where the initial condition is taken at the time $\tau=1$. This means to show 
 \begin{equation}\label{Rho*AbsCont}
 (\Phi_{1,\tau}) _* \rho_s \ll  \rho_s, \qquad   \tau \in [1, \infty).
\end{equation}  
In fact, we will prove the (stronger) quantitative estimate \eqref{QuasiInvFin} 
that implies \eqref{Rho*AbsCont}.
\subsubsection*{Finite dimensional approximation}
In order to replace the associated (formal) infinite dimensional Lebesgue measure with a well defined object we 
will work with the truncated system\footnote{The operator $P_{\leq N}$ the projection of the first $|k| \leq N$ Fourier modes of a function on~$\T$ and
 $P_{>N}  := \text{Id} - P_{\leq N}$. }, for $N \in \mathbb{N}$:
\begin{equation}\label{NLSTruncated}
\left\{\begin{array}{c}i \partial_t P_{\leq N} v^N + \partial_{xx} P_{\leq N} v^N +
 \frac{1}{t} P_{\leq N} \left( | P_{\leq N} v^N|^2P_{\leq N} v^N \right)=0, \quad t \in [1, \infty),\\
 \,\\
 P_{> N} v^N(t) = e^{i(t-\tau) \Delta} P_{> N} v^N(\tau), \qquad t \in [1, \infty),
 \end{array}\right.
\end{equation} 
where the initial data is prescribed at some initial time $\tau\in  [1, \infty)$.  
 In terms of the evolution of the coefficients $B_k^N(t)$ defined as in \eqref{vB} as follows
     $$v^N(t,x)=\sum_j B^N_j(t) e^{itj^2}e^{ixj}, \quad
B^N_j(t)=\mathcal F(e^{-it\Delta}v^N(t))(j),
$$
 this becomes in view of \eqref{Bjsyst}:
\begin{equation}\label{Bjsyst-Truncated}
\left\{\begin{array}{c}\partial_t B_k^N(t)=F(\{B^N_j(t)\}):=-\frac{i }{ t}
\sum_{\substack{k-j_1+j_2-j_3=0 \\ |j_1|, |j_2|, |j_3| \leq N }}
e^{-it(k^2-j_1^2+j_2^2-j_3^2)}B^N_{j_1}(t)\overline{B^N_{j_2}(t)}B^N_{j_3}(t), 
\quad
|k| \leq N,\\
\,\\
B^N_k(t) = B^N_k(\tau) \quad \mbox{for $|k| > N$},
 \end{array}\right.
\end{equation}
with initial datum $B^N_k(\tau)$, $k \in \mathbb{Z}$. With a little abuse of notation we denote by $\Phi^{N}_{\tau,t}$ the solution map associated with  both \eqref{NLSTruncated} and~\eqref{Bjsyst-Truncated}. 
Also, we will use the same notation $\Phi_{\tau,t}$ for the solution map  of \eqref{NLSv} or \eqref{Bjsyst}, and clearly $\Phi^{N=\infty}_{\tau,t }=\Phi_{\tau,t}$.  We will show that $\Phi^{N}_{\tau,t}$ preserves  the Lebesgue measure 
$$
dL_N (v) := \prod_{|k| \leq N} d ({\rm Re\,} B^N_k) d ({\rm Im\,} B^N_k),
$$
for all $N \in \mathbb{N}$.  This is a consequence of the Hamiltonian structure, however we give a direct proof in the following lemma.
\begin{lemma}\label{Lemma:LCN}
Let $N \in \mathbb{N}$, $ t,\tau \in [1, \infty)$. Then for any Borel set $A \subset H^{s'}(\T)$ we have
$$
L_N (P_{\leq N} \Phi^{N}_{\tau,t} (A) ) = L_N (P_{\leq N} A ). 
$$
\end{lemma}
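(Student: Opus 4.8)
The statement is the Liouville-type claim that the truncated flow $\Phi^N_{\tau,t}$ preserves the finite-dimensional Lebesgue measure $L_N$ on the low modes, and acts trivially on the high modes. The natural approach is to split the phase space and treat the two blocks separately. Since for $|k|>N$ the evolution is simply $B^N_k(t) = B^N_k(\tau)$ (the high modes are frozen), the map $P_{>N}\Phi^N_{\tau,t}$ is the identity and contributes nothing; all the content is in the finite-dimensional ODE system \eqref{Bjsyst-Truncated} for the coefficients $\{B^N_k(t)\}_{|k|\le N}$, viewed as a flow on $\mathbb{R}^{2(2N+1)} \cong \mathbb{C}^{2N+1}$. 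It therefore suffices to show that this ODE flow preserves the Lebesgue measure $dL_N = \prod_{|k|\le N} d({\rm Re\,}B^N_k)\,d({\rm Im\,}B^N_k)$.

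\textbf{Key steps.} First I would invoke Liouville's theorem for ODEs: the flow of $\dot B_k = F_k(\{B_j\})$ preserves Lebesgue measure if and only if the divergence of the vector field vanishes, i.e. $\sum_{|k|\le N}\big(\partial_{{\rm Re\,}B_k} {\rm Re\,}F_k + \partial_{{\rm Im\,}B_k}{\rm Im\,}F_k\big) = 0$, equivalently $2\,{\rm Re\,}\sum_{|k|\le N}\frac{\partial F_k}{\partial \overline{B_k}} = 0$ in Wirtinger notation (keeping in mind that $F$ depends on the $B_j$ and $\overline{B_j}$). Writing out $F_k$ from \eqref{Bjsyst-Truncated}, the only way a factor $\overline{B_k}$ appears inside the cubic sum defining $F_k$ is through the $\overline{B_{j_2}}$ term with $j_2 = k$; on the constraint $k-j_1+j_2-j_3=0$ with $j_2=k$ we get $j_1=j_3$, so the relevant contribution to $\partial F_k/\partial\overline{B_k}$ is $-\frac{i}{t}\sum_{|j_1|\le N} e^{-it(k^2-j_1^2+k^2-j_1^2)}|B_{j_1}|^2 = -\frac{i}{t}\sum_{|j_1|\le N}|B_{j_1}|^2$, since the phase $k^2-j_1^2+k^2-j_1^2$ is not zero in general — wait, more carefully: with $j_2=k$, $j_1=j_3$ the exponent is $k^2 - j_1^2 + k^2 - j_1^2$, which does NOT vanish, so one must instead isolate the genuinely resonant piece. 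The cleaner route, and the one I would actually take, is to observe directly that $\sum_{|k|\le N}\overline{B_k} F_k$ is purely imaginary (this is essentially the $L^2$ conservation / Hamiltonian identity: $\frac{d}{dt}\sum_{|k|\le N}|B_k|^2 = 2{\rm Re\,}\sum \overline{B_k}F_k = 0$, which follows from the symmetry of the quartic sum $\sum_{k-j_1+j_2-j_3=0} e^{-it\omega}B_{j_1}\overline{B_{j_2}}B_{j_3}\overline{B_k}$ under the substitution $(k,j_1,j_2,j_3)\leftrightarrow(j_2,j_3,k,j_1)$, which conjugates the summand). This gives mass conservation but not quite divergence-freeness, so the genuinely required computation is the divergence one.

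\textbf{Cleanest execution.} I would compute $\frac{\partial F_k}{\partial \overline{B_k}}$: from the sum, the terms where $\overline{B_k}$ appears are exactly those with $j_2 = k$, and differentiating gives $\frac{\partial F_k}{\partial\overline{B_k}} = -\frac{i}{t}\sum_{\substack{j_1 - j_3 = 0\\ |j_1|,|j_3|\le N}} e^{-it(k^2 - j_1^2 + k^2 - j_3^2)} B_{j_1} B_{j_3} = -\frac{i}{t}\sum_{|j_1|\le N} B_{j_1}^2 e^{-2it(k^2-j_1^2)}$. This is not obviously real, but summing over $k$ and taking the real part, I claim the total vanishes by a pairing/symmetry argument — alternatively, and more robustly, one should note the \emph{standard} fact that any finite-dimensional Galerkin truncation of a Hamiltonian PDE of the form $i\partial_t v = \partial H/\partial\overline{v}$ with real Hamiltonian $H$ automatically has divergence-free vector field, because the flow is $\dot B_k = -i\,\partial H/\partial\overline{B_k}$ and $\sum_k\partial_{B_k}(-i\partial_{\overline{B_k}}H) + \partial_{\overline{B_k}}(\overline{-i\partial_{\overline{B_k}}H}) = -i\sum_k \partial_{B_k}\partial_{\overline{B_k}}H + i\sum_k\partial_{\overline{B_k}}\partial_{B_k}H = 0$ by equality of mixed partials. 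Here \eqref{NLSTruncated} is precisely such a truncation with $H(v) = \int |\partial_x P_{\le N}v|^2 - \frac{1}{2t}\int|P_{\le N}v|^4$ (plus the free high-mode part), so the vector field is divergence-free, Liouville's theorem applies, and $L_N(P_{\le N}\Phi^N_{\tau,t}(A)) = L_N(P_{\le N}A)$ follows. The main obstacle, and the only point needing care, is checking that the time-dependent coefficient $\frac{1}{t}$ does not spoil the Hamiltonian structure at each fixed time (it does not: for each $t$ the vector field is still a Hamiltonian vector field, with a $t$-dependent Hamiltonian, and divergence-freeness holds pointwise in $t$, which is all Liouville's theorem needs); once this is observed the proof is essentially immediate.
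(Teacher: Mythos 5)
Your final argument is correct, and it takes a genuinely different route from the paper. The paper proves the lemma by a direct computation: writing the vector field $F_k$ from \eqref{Bjsyst-Truncated}, it computes $\partial F_k/\partial B_k$ (the contributions come from $j_1=k$ or $j_3=k$, which force the phase to vanish and produce the purely imaginary quantity $-\tfrac{i}{t}\sum_j |B_j|^2$ up to the double-counted diagonal term), so that $\partial F_k/\partial B_k+\overline{\partial F_k/\partial B_k}=0$ and Liouville applies. You instead invoke the abstract fact that a Galerkin truncation of $i\partial_t v=\partial H/\partial\bar v$ with real (possibly time-dependent) Hamiltonian has divergence-free vector field by equality of mixed partials; the paper explicitly acknowledges this alternative (``This is a consequence of the Hamiltonian structure, however we give a direct proof''), so your route is the structural one and the paper's is the hands-on one. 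Your observation that Liouville only needs pointwise-in-$t$ divergence-freeness, so the $1/t$ coefficient is harmless, is correct, as is the reduction to the closed finite-dimensional low-mode system.

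Two caveats. First, the exploratory computation you keep in the middle of the proposal is wrong as written and should be excised rather than left as a fallback: the Wirtinger form of the real divergence is $\sum_k\bigl(\partial_{B_k}F_k+\partial_{\overline{B_k}}\overline{F_k}\bigr)=2\,{\rm Re}\sum_k\partial F_k/\partial B_k$, not $2\,{\rm Re}\sum_k \partial F_k/\partial\overline{B_k}$ (note that your later Hamiltonian computation silently uses the correct formula); moreover, on the constraint $k-j_1+j_2-j_3=0$ with $j_2=k$ one gets $j_1+j_3=2k$, not $j_1=j_3$, so the expression you wrote for $\partial F_k/\partial\overline{B_k}$ and the ``pairing/symmetry'' claim attached to it do not stand. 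Second, there is a small coordinate point to state: $L_N$ is defined in the variables $B_k$, whereas the Hamiltonian $H(v)=\int|\partial_x P_{\le N}v|^2-\frac{1}{2t}\int|P_{\le N}v|^4$ generates the flow in the Fourier variables $\hat v_k(t)=e^{itk^2}B_k(t)$. Since at each fixed time these coordinate systems differ by a diagonal unimodular rotation, Lebesgue-measure preservation transfers; alternatively, you can note that the $B$-system \eqref{Bjsyst-Truncated} is itself of the form $\partial_t B_k=-i\,\partial\widetilde H/\partial\overline{B_k}$ for the real, time-dependent interaction-picture Hamiltonian $\widetilde H=\frac{1}{2t}\sum_{k_1-k_2+k_3-k_4=0}e^{it(k_1^2-k_2^2+k_3^2-k_4^2)}B_{k_1}\overline{B_{k_2}}B_{k_3}\overline{B_{k_4}}$ (restricted to $|k_j|\le N$), and then your mixed-partials identity applies directly in the variables in which $L_N$ is defined. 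With these two repairs the proof is complete.
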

\begin{proof}
For $|k|\leq N$, in view of \eqref{Bjsyst-Truncated}, it is easy to check that the statement follows by Liouville theorem (coming back to the real and imaginary parts of $B^N_k(t)$)
once we have proved the divergence free condition  
\begin{equation}\label{eq:DivFree}
\frac{\partial F(\{B^N_k(\cdot)\})}{\partial B^N_k(\cdot) } + \frac{\overline{\partial F(\{B^N_k(\cdot)\})}}{\partial \overline{B^N_k(\cdot)} }  = 0. 
\end{equation}
Now when we compute $\frac{\partial F(\{B^N_k(\cdot)\})}{\partial B^N_k(\cdot)}$ we 
have a nonzero contribution only from the terms of the sum such that $k=j_1$ or $k=j_3$. Since in this way the terms 
such that $k=j_1=j_3$ are counted two times, and noting that the sum in $F$ is invariant under interchanging $j_1$ and $ j_3$, we arrive to
$$
\frac{\partial F(\{B^N_k(\cdot)\})}{\partial B^N_k(\cdot)}  = 
 -\frac{i }{t} \sum_{|j_{2}| \leq N}  |B^N_{j_2}(\cdot)|^2 +  \frac{i}{2t} |B^N_k|^2
$$
Similarly we can compute
$$
\frac{\overline{\partial F(\{B^N_k(\cdot)\})}}{\partial \overline{B^N_k(\cdot)} }  = 
 \frac{i}{t} \sum_{|j_{2}| \leq N}  |B^N_{j_2}(\cdot)|^2 -  \frac{i }{2t} |B^N_k|^2
$$
so that \eqref{eq:DivFree} follows.
\end{proof}
We will also need a finite dimensional analogous of the mass conservation.
\begin{lemma}\label{Lemma:MCN}
Let $N \in \mathbb{N} \cup \{ \infty \}$, $t, \tau \in [1, \infty)$. Then 
$\|  \Phi^{N}_{\tau,t} v \|_{L^{2}} =  \|   v \|_{L^{2}}.$
\end{lemma}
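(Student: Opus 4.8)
The plan is to handle the finite truncations $N \in \N$ and the untruncated case $N = \infty$ separately, exploiting for finite $N$ the orthogonal splitting $v^N = P_{\leq N} v^N + P_{>N} v^N$. By the second equation in \eqref{NLSTruncated} the high-frequency part evolves by the free Schr\"odinger group, $P_{>N} v^N(t) = e^{i(t-\tau)\Delta} P_{>N} v^N(\tau)$, which is unitary on $L^2(\T)$; hence $\| P_{>N} v^N(t) \|_{L^2}$ is independent of $t$. Since $\| v^N(t) \|_{L^2}^2 = \| P_{\leq N} v^N(t) \|_{L^2}^2 + \| P_{>N} v^N(t) \|_{L^2}^2$ by orthogonality of Fourier modes, it remains to prove that the low-frequency part $w := P_{\leq N} v^N$ has conserved $L^2$ norm.

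For that I would run the standard mass identity on $w$, which by the first equation in \eqref{NLSTruncated} solves $i \partial_t w + \partial_{xx} w + \frac1t P_{\leq N}(|w|^2 w) = 0$: pairing with $\overline{w}$ in $L^2(\T)$ and taking imaginary parts, the dispersive term contributes $-\mathrm{Im}\int_\T |\partial_x w|^2 = 0$, so $\frac12 \frac{d}{dt} \| w \|_{L^2}^2 = -\frac1t \mathrm{Im} \int_\T P_{\leq N}(|w|^2 w)\, \overline{w}$. The key point is that $P_{\leq N}$ is a self-adjoint real Fourier projector with $P_{\leq N} w = w$, so $\int_\T P_{\leq N}(|w|^2 w)\, \overline{w} = \int_\T |w|^2 w\, \overline{P_{\leq N} w} = \int_\T |w|^4 \in \R$ and its imaginary part vanishes; hence $\frac{d}{dt}\| w \|_{L^2}^2 = 0$. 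Equivalently one can argue directly on the coefficients: for $|k| \leq N$ the system \eqref{Bjsyst-Truncated} is a genuine finite-dimensional ODE, and $\frac{d}{dt} \sum_{|k| \leq N} |B_k^N|^2 = 2 \,\mathrm{Re} \sum_{|k| \leq N} \overline{B_k^N}\, F(\{B_j^N\}) = 0$ exactly as in the $a \equiv 1$ case of the computation of $\partial_t \sum_k a(k) |B_k(t)|^2$ in \S 2.2 — the quadruple sum is invariant under exchanging $\{k, j_2\}$ with $\{j_1, j_3\}$ up to complex conjugation, hence real, and multiplication by $-i/t$ makes the whole expression purely imaginary — while the modes $|k| > N$ are frozen by the second line of \eqref{Bjsyst-Truncated}; recalling that $\| v^N \|_{L^2(\T)}^2$ is a fixed multiple of $\sum_k |B_k^N|^2$ this again gives the claim.

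The case $N = \infty$ is the same computation with all projectors removed: pairing \eqref{NLSv} with $\overline v$ and taking imaginary parts gives $\frac{d}{dt}\| v \|_{L^2}^2 = 0$, since $\frac1t \int_\T |v|^2 v\, \overline v = \frac1t \int_\T |v|^4$ is real — indeed, as recalled in \S 2.2, the $L^2$ norm is already a conserved quantity of \eqref{Bjsyst}. I do not expect a genuine obstacle here: for finite $N$ this is an honest finite-dimensional computation (a Liouville/ODE argument parallel to Lemma \ref{Lemma:LCN}), and for $N = \infty$ the only point needing a word of justification is that the low-regularity solutions of \S \ref{ssectconstr} make the formal energy identity rigorous, which is standard (prove it for smooth data, for which the Bourgain solution is classical on each subinterval $I_j$, and pass to the limit using the continuity estimate \eqref{contv}).
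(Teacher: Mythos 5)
Your proposal is correct and follows essentially the same route as the paper: the paper multiplies the truncated equation by $iP_{\leq N}\overline{v^N}$, integrates over $\T$, takes the real part, and uses exactly your key observation that ${\rm Re\,}\frac{i}{t}\int_\T (P_{\leq N}\overline{v^N})\,P_{\leq N}(|P_{\leq N}v^N|^2P_{\leq N}v^N)={\rm Re\,}\frac{i}{t}\int_\T |P_{\leq N}v^N|^4=0$ via self-adjointness of the projector. Your explicit remarks on the unitarily evolving high modes, the coefficient-level alternative, and the $N=\infty$ case are fine but just spell out what the paper leaves implicit.
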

\begin{proof}
We multiply the equation \eqref{NLSTruncated} agains $iP_{\leq N} \bar v_N$ then integrate over $\mathbb{T}$ 
and we take the real part. After integration by parts the lemma follows, noting that
$$
{\rm Re\,} \, \frac{i}{t} \int_{\T} (P_{\leq N} \overline v^N) P_{\leq N}  \left( |P_{\leq N} v^N|^2 P_{\leq N}  v^N \right) 
=  
{\rm Re\,} \, \frac{i}{t} \int_{\T} |P_{\leq N} \overline v^N|^4  = 0.
$$
\end{proof}
The following smoothing lemma is the main estimate that is needed to show the quasi-invariance of $\rho_s$.  The analysis is very close to Lemma 4.2 of \cite{FoSe}, but with the important difference that we have dependence on $t$ on the right hand side. This improvement, that will be fundamental for our purposes, 
is due to the fact that we have an extra factor $t^{-1}$ in front of our cubic nonlinearity. 
\begin{lemma}\label{ExpIntBound}
Let $s \in (0,1)$, $M>0$ and $v$ such that $\|v\|_{L^2(\T)} \leq M$, and $t, \tau \in [1, \infty)$. 
The following holds for $\varepsilon >0$ sufficiently small
$$
\left| \|  |\partial_x|^{s+1/2} P_{\leq N}\Phi^{N}_{\tau,t}  v\|_{L^2}^2  -  \| |\partial_x|^{s+1/2}  P_{\leq N} v\|_{L^2}^2 \right| 
\leq C(M) \|  |D|^{s- \varepsilon} P_{\leq N}v \|_{L^2}^3,
$$
where for $\sigma\geq 0$, we denote by  $|\partial_x|^{\sigma}$ the Fourier multiplier of symbol $|k|^{\sigma}$.
\end{lemma}
\begin{proof}
The proof is the same of that of Lemma 4.2 of \cite{FoSe} after the modification that we are going to explain. We first simplify the notation letting 
$$
v_N(t) = P_{\leq N} \Phi^{N}_{\tau,t}  v,
$$
that solves the first equation of \eqref{NLSTruncated}, i.e.
$$i\partial_t v_N+\partial_{xx}v_N+\frac 1t
P_{\leq N}(|v_N|^2v_N)=0.$$
Since
$$
\|  |\partial_x|^{s+1/2} v_N(t) \|_{L^2}^2  -  
\|  |\partial_x|^{s+1/2}  v_N(\tau) \|_{L^2}^2= \int_{\tau}^{t} \frac{d}{d\theta} \|  |\partial_x|^{s+1/2} v_N(\theta) \|_{L^2}^2 \, d\theta,
$$
using  that $ P_{\leq N}$ is symmetric and the identity $P_{\leq N} v_N=v_N$, it follows that
\begin{align*}
\int_{\tau}^{t}\frac{d}{d\theta} \|  |\partial_x|^{s+1/2} v_N(\theta) \|_{L^2}^2 \, d\theta   &= 
- 2  {\rm Im\,}  \int_\tau^t \int_{\T} \ \left( |\partial_x|^{s+1/2} ( |v_N(\theta)|^2 v_N(\theta)) \right) |\partial_x|^{s+1/2} \overline{v_N(\theta)} \, \frac{d \theta}{\theta}
\\ \nonumber
&= 
- 2  {\rm Im\,}  \int_\tau^t \int_{\T} |v_N(\theta)|^2 v_N(\theta)  |\partial_x|^{2s+1} \overline{v_N(\theta)} \, \frac{d \theta}{\theta}.
\end{align*}
Thus once we have shown
\begin{equation}\label{fmdksldkngklskdng}
\left| {\rm Im\,}  \int_\tau^t \int_{\T} |v_N(\theta)|^2 v_N(\theta)  |\partial_x|^{2s+1} \overline{v_N(\theta)} \, \frac{d \theta}{\theta}dx \right| \leq 
C(M) \|  |D|^{s- \varepsilon} v_N(\tau) \|_{L^2}^3 
\end{equation}
the proof is concluded.  Recalling \eqref{Bjsyst-Truncated}, we have reduced the matters to bound the modulus of  
\begin{align}
 {\rm Im\,}  \sum_{\substack{ k_j : |k_j| \leq N \\ k_1 - k_2 + k_3 -k_4 =0 \\  k_4 \neq k_1,  k_3  } }  \int_{\tau}^t 
e^{i(k_1^2 - k_2^2 + k_3^2 - k_4^2) \theta} |k_4|^{2s+1}
(B^N_{k_1} \overline{B^N_{k_2}} B^N_{k_3} \overline{B^N_{k_4}})(\theta) \, \frac{d\theta}{\theta} \, ,
\end{align}
where the restriction $k_4 \neq k_1,  k_3$ comes from the fact that if $k_4 = k_1$ or $k_4 = k_3$ then the integral   
is real. This restriction removes the resonances, that in one dimension are only these trivial ones, 
so hereafter we will always have 
\begin{equation}\label{ResCond}
k_1^2 - k_2^2 + k_3^2 - k_4^2 \neq 0.
\end{equation}
By a symmetrization argument we are reduced to bound the modulus of  
\begin{align*}
 {\rm Im\,}  \sum_{\substack{ k_j : |k_j| \leq N \\ k_1 - k_2 + k_3 -k_4 =0 \\  k_4 \neq k_1,  k_3  } }  \int_{\tau}^t 
e^{i(k_1^2 - k_2^2 + k_3^2 - k_4^2)\theta} (  |k_1|^{2s+1} - |k_2|^{2s+1} + |k_3|^{2s+1} - |k_4|^{2s+1}) 
(B^N_{k_1} \overline{B^N_{k_2}} B^N_{k_3} \overline{B^N_{k_4}})(\theta) \, \frac{d\theta}{\theta} .
\end{align*}
Now letting 
$$
\phi(k_1, k_2, k_3, k_4) := 
\frac{ |k_1|^{2s+1} - |k_2|^{2s+1} + |k_3|^{2s+1} - |k_4|^{2s+1}}{|k_1|^{2} - |k_2|^{2} + |k_3|^{2} - |k_4|^{2}}
$$
and  integrating by parts in time, as allowed by the non resonance condition \eqref{ResCond},  
we can reduce, taking also advantage of the symmetry of the indexes, to estimate the modulus of each of the following terms
\begin{align}\label{fmdjksldjngjksdjng}
& i \sum_{\substack{ k_j : |k_j| \leq N \\ k_1 - k_2 + k_3 -k_4 =0 \\  k_4 \neq k_1,  k_3  } }  \int_{\tau}^t 
e^{i(k_1^2 - k_2^2 + k_3^2 - k_4^2)\theta} \phi(k_1, k_2, k_3, k_4) 
\left( (\partial_{\theta} B^N_{k_1}) \overline{B^N_{k_2}} B^N_{k_3} \overline{B_{k_4}}\right)(\theta) \, \frac{d \theta}{\theta}
\\ \nonumber
&
- i \sum_{\substack{ k_j : |k_j| \leq N \\ k_1 - k_2 + k_3 -k_4 =0 \\  k_4 \neq k_1,  k_3  } }   \int_{\tau}^t 
e^{i(k_1^2 - k_2^2 + k_3^2 - k_4^2) \theta} \phi(k_1, k_2, k_3, k_4)
\left(B^N_{k_1} \overline{B^N_{k_2}} B^N_{k_3} \overline{B_{k_4}} \right)(\theta) \, \frac{d \theta}{\theta^2}
\\ \nonumber
&
- i  \sum_{\substack{ k_j : |k_j| \leq N \\ k_1 - k_2 + k_3 -k_4 =0 \\  k_4 \neq k_1,  k_3  } }    
e^{i(k_1^2 - k_2^2 + k_3^2 - k_4^2) \theta} \phi(k_1, k_2, k_3, k_4)
\left[ \frac{1}{\theta} \left(B^N_{k_1} \overline{B^N_{k_2}} B^N_{k_3} \overline{B^N_{k_4}} \right)(\theta)  \right]_{\theta = \tau}^{\theta = t}.
\end{align}
Now using equation \eqref{Bjsyst-Truncated}
 we see that the 
  first term of \eqref{fmdjksldjngjksdjng}  becomes, after renaming the indexes,
 $$ 
 \!\!\!\!\!\!\!\!\!\!\!\!\!\!\!\!\!\!  \sum_{\substack{ k_j : |k_j| \leq N \\ k_1-k_2 + k_3-k_4+k_5-k_6=0 \\ |k_4-k_5+k_6|^{2} - |k_2|^{2} + |k_3|^{2} - |k_4|^{2} \neq 0 } } 
  \!\!\!\!\!\!\!\!\!\!\!\!\!\!\!\!\!\!
   \int_{\tau}^t 
e^{i(k_1^2 - k_2^2 + k_3^2 - k_4^2 + k_5^2 - k_6^2)\theta} \phi(k_4-k_5+k_6, k_1, k_2, k_3) 
\!\!\!\! \prod_{k=1,3,5} \!\!\!\! B^N_{k_\ell}(\theta) \!\!\!\! \prod_{k=2,4,6} \!\!\!\! \overline{B^N_{k_\ell}  (\theta)}  \, \frac{d \theta}{\theta^2}.
$$
Now we are in the exact same situation as in the proof of Lemma 4.2 of \cite{FoSe}, but we have an extra factor $\theta^{-1}$ 
that, combined with the uniform control of the $l^{2,s}$ norms of $\{ B^N_{k}(\theta)\}$ from \S \ref{ssectconstr},
allows to obtain an estimate which is uniform over $t >1$, as claimed in \eqref{fmdksldkngklskdng}, proceeding exactly as in \cite{FoSe}.
 \end{proof}
\subsubsection*{Proof of \eqref{QuasiInvFin}}
We are now ready to show the quasi-invariance of $\rho_s$ under $\Phi_{1,t}$, that is the content of the first part of Theorem \ref{MainThm3}. In fact we will prove the quantitative estimate~\eqref{QuasiInvFin}, and even the more general estimate
\begin{equation}\label{QuasiInvFin_general}
\rho_s \left( \Phi_{\tau,t} (A) \right) \leq C_{\kappa} \,\rho_s  ( A )^{1 - \kappa}, \quad \forall\kappa >0,\quad \forall \, \tau, t\geq 1.
\end{equation}
We denote 
 with  $\gamma_{s,N}^{\perp}(v)$ the Gaussian measure induced by the random variable
\begin{equation}\label{Def:GaussmEasurePerp}
\omega \longmapsto  \sum_{|k| >N} 
\frac{g_k^{\omega} }{
(1 + |k|^{2s +1})^{\frac 1 2}
}   e^{i x k}\, .
\end{equation}
The measure $ \gamma_s$ factorizes as
\begin{align*}
 d \gamma_s (v) 
& = e^{- \left(\| P_{\leq N}  v\|_{L^2}^2 + \| P_{\leq N}  v\|_{\dot{H}^{s + 1/2}}^2 \right) } d L_N (P_{\leq N} v) \,  
d \gamma_{s,N}^{\perp}(P_{>N} v).
\end{align*}
Thus given any Borel set $A \subset H^{s'}$, $s'<s < 1/2$, we have
\begin{align*}
 \rho_s \big( \Phi_{\tau,t}^{N} (A) \big) 
 & =
 \int_{\Phi_{\tau,t}^{N} (A)}   d \rho_s (v)
\\ \nonumber
&= \int_{\Phi_{\tau,t}^{N} (A)}  1_{\big\{ \| v \|_{L^{2}} \leq M \big\} } d \gamma_s(v) 
 \\ \nonumber
& = 
\int_{\Phi_{\tau,t}^{N} (A)}  
1_{\big\{ \| v \|_{L^{2}} \leq M \big\} } 
e^{- \big( \| P_{\leq N}  v\|_{L^2}^2 + \| P_{\leq N}  v\|_{\dot{H}^{s+1/2}}^2 \big) } d L_N (P_{\leq N} v ) \,  d \gamma_{s,N}^{\perp}(P_{>N} v)\\ \nonumber
& =
\int_{A}  
1_{\big\{ \| v \|_{L^{2}} \leq M \big\} } 
e^{- \big( \| P_{\leq N}  v\|_{L^2}^2 + \| P_{\leq N}  \Phi^{N}_{\tau,t}  v\|_{\dot{H}^{s+1/2}}^2 \big) } d L_N (P_{\leq N} v) \,  d \gamma_{s,N}^{\perp}(P_{>N} v)\\ \nonumber
& =
\int_{A} 1_{\big\{ \| v \|_{L^{2}} \leq M \big\} } 
e^{- \big(  \| P_{\leq N}  \Phi^{N}_{\tau,t}  v\|_{\dot{H}^{s + 1/2}}^2  -  \| P_{\leq N}  v\|_{\dot{H}^{s + 1/2}}^2  \big)} 
 \,  d \gamma_s(v)\,
 \nonumber
 \\ & =
 \int_{A} 
e^{- \big( \| P_{\leq N}  \Phi^{N}_{\tau,t}  v\|_{\dot{H}^{s + 1/2}}^2  -  \| P_{\leq N}  v\|_{\dot{H}^{s + 1/2}}^2 \big)} 
 \,  d \rho_s(v)\,, \nonumber
\end{align*}
where in the fourth identity we used Lemmas \ref{Lemma:LCN}-\ref{Lemma:MCN}, and the fact that in view of \eqref{NLSTruncated} we have $P_{>N}\Phi_{\tau,t}^{N} v=e^{i(t-\tau)\Delta}P_{>N}v$, 
to change variables in the indicator function and in the Lebesgue measure. Now using 
Lemma \ref{ExpIntBound} we arrive to
$$
 \rho_s \left( \Phi_{\tau,t}^{N} (A) \right)   \leq
 \int_{A} 
e^{C(M) \| P_{\leq N} |D|^{s- \varepsilon} v \|_{L^2}^3} 
 \,  d \rho_s(v)\,.
$$
Thus using H\"older inequality
$$
\rho_s \big( \Phi_{\tau,t}^{N} (A) \big)  \leq
 \rho_s(A) ^{1- \frac1p} \left( \int_{A} 
e^{p \, C(M) \| P_{\leq N} |D|^{s- \varepsilon} v \|_{L^2}^3} 
 \,  d \rho_s(v) \right)^{\frac1p}.
$$
On the other hand by Lemma 5.1 of \cite{FoSe} we have, for all $p < \infty$ and $N\in\mathbb N$,
$$\int_A e^{p \, C(M) \| P_{\leq N} |D|^{s- \varepsilon} v \|_{L^2}^3} 
 \,  d \rho_s(v) \leq C(p,M),$$
so, letting $\kappa = 1/p$ we have arrived at 
\begin{equation}\label{QuasiInvQuant}
\rho_s \left( \Phi_{\tau,t}^{N} (A) \right) \leq C_{\kappa} \rho_s  ( A )^{1 - \kappa},
\qquad \kappa >0.
\end{equation}
Now we would  like to pass to the limit $N \to \infty$ in this inequality to obtain \eqref{QuasiInvFin}, using the strategy introduced in \cite{sigma}.
First we do this for compact sets $A$ in the chosen $H^{s'}$ topology. Since $A$ is compact it is in particular bounded in $H^{s'}$, thus we can use the stability estimate recalled at the 
end of this section  to show that for all $\varepsilon >0$ we can find $N_{\varepsilon}$ sufficiently large that
\begin{equation}\label{FlowStability}
\Phi_{\tau,t} (A) \subset 
\Phi^{N}_{\tau,t} (A + B_{\varepsilon}), \qquad \forall N>N_\varepsilon,
\end{equation}
where $B_{\varepsilon}$ is a ball of radius $\varepsilon >0$ in the $H^{s''}$ topology, where   $s''<s'$ is close to $s'$.
We will show how to do it at the end of the section.  Once we have proved \eqref{FlowStability} we can then proceed as follows.
Since $A$ is compact in $H^{s'}$ it is also a closed set in $H^{s''}$ and therefore by the dominated convergence 
for all $\delta >0$ we have 
$\rho_s  ( A + B_{\varepsilon}) - \rho_s  ( A )  \leq \delta $ for all $\varepsilon$ sufficiently small (depending on $\delta$).
Thus 
$$
\rho_s( \Phi_{\tau,t} (A)) \leq  \rho_s( \Phi^{N}_{\tau,t} (A + B_{\varepsilon})) \leq 
C_{\kappa} \rho_s  ( A + B_{\varepsilon} )^{1 - \kappa}  < 
C_{\kappa} ( \rho  ( A ) + \delta )^{1 - \kappa}  
$$ 
and since $\delta >0$ is arbitrary we have indeed proved \eqref{QuasiInvFin_general} (and in particular \eqref{QuasiInvFin})
for $A$ compact. The argument then extends to Borel sets $A \subset H^{s'}$ 
using the inner regularity of the 
Gaussian measure $\gamma_s$ and the continuity of the solution map (see Lemma \ref{InverseFlowStab}). 
\subsubsection*{Proof of \eqref{FlowStability}}
Letting 
$$
\Psi_N(v) = (\Phi^{N}_{\tau,t})^{-1}  \Phi_{\tau,t}  (v)
$$
where $(\Phi^{N}_{\tau,t})^{-1}$ is the backward truncated solution map from time $t$ to time $\tau$, the 
inclusion \eqref{FlowStability} follows once we prove for $A$ compact in the $H^{s'}$ topology and $s'<s$ the existence of $N_\varepsilon$ such that 
\begin{equation}\label{laterBettExpl}
\| v - \Psi_N v\|_{H^{s''}} < \varepsilon, \qquad \forall N>N_\varepsilon, \qquad \forall v\in A.
\end{equation}
This is simply because $\Phi^{N}_{\tau, t} \Psi_N(v) =  \Phi_{\tau, t}  (v)$, which means that given any 
$v \in A$ we can write $\Phi_{\tau, t}  (v)$ as the $\Phi^{N}_{\tau, t}$ evolution of an initial datum
$\Psi_N(v)$
contained in $v + B_{\varepsilon}$ (by \eqref{laterBettExpl}), that is exactly what  \eqref{FlowStability} means.   To prove \eqref{laterBettExpl} we estimate 
\begin{align}\nonumber
\| v - \Psi_N(v) \|_{H^{s''}} 
& =
\| (\Phi^{N}_{\tau,t})^{-1}  \Phi_{\tau,t}  (v) - (\Phi^{N}_{\tau,t})^{-1}  \Phi^{N}_{\tau,t}(v) \|_{H^{s''}}
\\ \nonumber
&
\leq C(A, M) \|  \Phi_{\tau,t}  (v) -   \Phi^{N}_{\tau,t}(v) \|_{H^{s''}} , 
\end{align}
where we used  that $(\Phi^{N}_{\tau,t})^{-1} =\Phi^{N}_{t,\tau}$, estimate \eqref{InvFlowPolyTruncated} and the fact that
$A$ is compact, so that in particular $\sup_{v \in A} \| v \|_{H^{s'}} < \infty$. 
Finally, Lemma 2.27 of \cite{B94_pak}  implies that
$$
\sup_{v\in A}\|  \Phi_{\tau,t}  (v) -   \Phi^{N}_{\tau,t}(v) \|_{H^{s''}} \leq CN^{s''-s'}
$$
which yields \eqref{laterBettExpl}.
\section{Limit density}
In this section we prove that the transported measures $(\Phi_{1,t})_* \rho_s$ that we have constructed in the previous section have densities  
and that these densities with respect to $\rho_s$ attain a limit as $t \to \infty$. 
This shows the validity of the representation formula \eqref{QuasiInvFinDensity} and so it concludes the proof 
of Theorem \ref{MainThm3}.
At fixed time $t >1$ we define the family of  approximated densities
 $$
 f_N(t, v) := 
 e^{2 {\rm Im\,}  \int_1^t \int_{\T}  |\Phi^N_{1,\tau}|^2 \Phi^N_{1,\tau}(v)  |D|^{2s+1} \overline{\Phi^N_{1,\tau}(v)} \, \frac{d \tau}{\tau} dx } 
 $$
 where we recall that $\Phi^{N}_{1,t}$ is the solution map of the truncated forward evolution equation \eqref{NLSTruncated}. Let $q >1$. By the analysis of Section \ref{Sec:QuasiInvariance}  we have the uniform bound 
$$
\sup_{N} \|  f_N(t, v) \|_{L^q(\rho_s)} < \infty.
$$
Moreover, invoking Lemma \ref{ExpIntBound} one can prove that the sequence $f_N(t, v)$ converges 
in measure (as $N \to \infty$) to 
$f(t, v)$ 
where 
\begin{equation}\label{fmdksldkngklskdngBis}
 f(t, v) := e^{ 2  {\rm Im\,}  \int_1^t \int_{\T}  |\Phi_{1,\tau}(v)|^2 \Phi_{1,\tau}(v)  |D|^{2s+1} 
 \overline{\Phi_{1,\tau}(v)} \, \frac{d \tau}{\tau} dx}.
\end{equation}
These two facts imply that $f_N(t, v)$ converges to $f(t, v)$ in $L^{q'}$ for all $q' < q$ (see Lemma~3.7 of \cite{Tzv08}).
Recalling then the estimate \eqref{fmdksldkngklskdng} we see that the density satisfies 
\begin{equation}\label{fmdksldkngklskdngTris}
|f(t, v)| \leq e^{ C(M) \|  |D|^{s- \varepsilon} v \|_{L^2}^3 },  
\end{equation}
for all sufficiently small $\varepsilon >0$. Moreover, we have good approximation properties for the solution map $\Phi_{1,t}$
with the approximated solution map
$\Phi^{N}_{1,t}$ in the $H^{s'}(\T)$ topology for all $s' < s$ if we restrict to compact sets (again, this is 
a consequence of Lemma 2.27 in \cite{B94_pak} taking into account the local well-posedness theory that we 
developed in Section \ref{ssectconstr}). Once we have these three ingredients
we can proceed exactly as in the proof of Proposition~7.2 of \cite{GLT23} (that is written in an abstract setting) 
to show  
that the density $f(t, v)$ coincides with that of the transported measure $(\Phi_{1,t})_* \rho_s$ , namely
that for any Borel set $A$ (in the $H^{s'}(\T)$, $s' < s$, topology)  we have 
$$
\rho_s(\Phi_{1,t} (A))=  \int_{A} f(t, v) d \rho_s(v).
$$
Now we want to pass to the limit $t \to \infty$ into \eqref{fmdksldkngklskdngBis}, that is the novelty of this model. 
We can do it using \eqref{fmdksldkngklskdngTris} and the dominated convergence Theorem, getting
 $$
\lim_{t \to \infty} f(t, v) =  f(\infty, v) :=  
e^{ 2 {\rm Im\,}  \int_1^\infty \int_{\T} |\Phi_{1,\tau}(v)|^2 \Phi_{1,\tau}(v)  |D|^{2s+1} 
 \overline{\Phi_{1,\tau}(v)} \, \frac{d \tau}{\tau} dx},
 $$
 that concludes the proof of Theorem \ref{MainThm3}. 
%
\section{Growth of H\"older norms of typical solutions of NLS}\label{Sec:thbf2}
In this section we aim to control the growth as $t \to \infty$ of the  $\mathcal C^{s'}$-norm of $\rho_s$-typical solutions, for $s' < s$, in a quantitative way. This will be crucial, in the next section, in order to prove Theorem~\ref{thbf2}.  More precisely, the main goal of this section is to establish  Theorem~\ref{QuantControlCsBis} below. We will need a number of preliminary propositions. The 
proof of Theorem~\ref{QuantControlCsBis}  as a consequence of these propositions will be presented at the end of the section.
Here is the main result of this section. 
\begin{theorem}\label{QuantControlCsBis}
Let $s \in (0,1)$, $0 \leq s' < s$ and $\varepsilon \in (0,1/2)$. For $\rho_s$-almost every $f$ there exists a
 diverging sequence of natural numbers $\{ N_n(f) \}_{n \in \N}$ and $C(f)>0$, $T(f) \geq1$ 
such that for all  $t > T(f)$:
\begin{equation}\label{QuantControlCsBisFinalBound}
\limsup_{n \to \infty}   \|  \Phi_{1, t} P_{\leq N_n(f)} f \|_{\mathcal C^{s'}} \lesssim C(f) +  t^{\varepsilon}.
\end{equation}
\end{theorem}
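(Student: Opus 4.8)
The plan is to combine three ingredients: (i) a local-in-time almost sure smoothing property of the Duhamel (nonlinear) part of the solution in H\"older-type spaces, which follows from a probabilistic bilinear/multilinear estimate together with the Strichartz bounds \eqref{Str}--\eqref{Str2}; (ii) the global quantitative quasi-invariance estimate \eqref{QuasiInvFin}--\eqref{QuasiInvQuant}, which lets one transfer a fixed-time bad-event estimate along the flow at the cost of a power $\rho_s(\cdot)^{1-\kappa}$; and (iii) a Borel--Cantelli argument over a dyadic-in-time sequence $t_m = 2^m$. The key point, as the authors emphasize, is that $\rho_s$ (and hence $\gamma_s$) is essentially invariant for the \emph{linear} flow $e^{it\partial_{xx}}$, so the $C^{s'}$-norm of the linear evolution of a $\rho_s$-typical datum stays of order $O(1)$ times a slowly growing factor, and the only growth comes from the Duhamel term, which is lower order.

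First I would fix $s'<s$, $\varepsilon\in(0,1/2)$, and a large parameter $R$, and estimate, at a fixed time $t\geq 1$, the $\rho_s$-measure of the set
$$
B_{R}(t) := \bigl\{ f : \| \Phi_{1,t} f \|_{C^{s'}} > R \bigr\}.
$$
Writing $\Phi_{1,t} f = e^{i(t-1)\partial_{xx}} f + (\text{Duhamel term})$, I would bound the measure of the "linear part is large" event using the standard large-deviation estimate for Gaussian series in H\"older norms (i.e. $\gamma_s(\{\|e^{it\partial_{xx}}f\|_{C^{s'}}>\lambda\})\leq C e^{-c\lambda^2}$, uniformly in $t$, since the Gaussian coefficients are only multiplied by unimodular factors $e^{i t k^2}$), and the "Duhamel part is large" event using Lemma \ref{ExpIntBound}-type smoothing: on the time interval $[1,1+\delta(M)]$ (and then iterated over unit intervals with the $1/t$ gain, exactly as in the proof of Theorem \ref{thbf}), the Duhamel term lies in $H^{s+\sigma}$ for some $\sigma>0$ with an exponential-moment bound $\int e^{c\|\cdot\|^{a}}d\rho_s<\infty$ coming from Lemma 5.1 of \cite{FoSe}. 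Combining, one gets at fixed time
$$
\rho_s(B_R(t)) \leq C\, e^{- c R^{\theta}}
$$
for some $\theta>0$, \emph{uniformly in $t\geq 1$} (this uniformity is where the $\frac1t$ factor in \eqref{NLSv} is used, as in Lemma \ref{ExpIntBound}).

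Next I would run this through the quasi-invariance estimate. Set, for the dyadic times $t_m=2^m$,
$$
A_m := \{ f : \| \Phi_{1,t_m} f \|_{C^{s'}} > t_m^{\varepsilon} \} = B_{t_m^{\varepsilon}}(t_m).
$$
Pulling back by the flow, $A_m = \Phi_{1,t_m}^{-1}(\{\|v\|_{C^{s'}}>t_m^\varepsilon\}) = \Phi_{t_m,1}(\{\cdots\})$, so by \eqref{QuasiInvFin_general} (with initial time $t_m$ and final time $1$, both $\geq1$) and the uniform fixed-time bound above,
$$
\rho_s(A_m) \leq C_\kappa \,\rho_s(\{\|v\|_{C^{s'}}>t_m^\varepsilon\})^{1-\kappa} \leq C_\kappa\, e^{-c(1-\kappa) t_m^{\varepsilon\theta}},
$$
which is summable in $m$. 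By Borel--Cantelli, for $\rho_s$-a.e. $f$ there is $m_0(f)$ with $\|\Phi_{1,t_m}f\|_{C^{s'}}\leq t_m^\varepsilon$ for all $m\geq m_0(f)$; interpolating over the unit-length time intervals between consecutive dyadic times using the local-in-time continuity estimate \eqref{contv}/Lemma \ref{InverseFlowStab} (whose constant on $[t_m,2t_m]$ is controlled by $\|v(t_m)\|_{L^2}\leq M$, hence uniform) upgrades this to $\sup_{t\geq1}\|\Phi_{1,t}f\|_{C^{s'}}\lesssim C(f)+t^\varepsilon$. Finally, to get the statement with the truncated data $P_{\leq N_n(f)}f$: since $f\in H^{s'}$ is fixed and $P_{\leq N}f\to f$ in $H^{s'}$, and the bad events above are (up to $\varepsilon$-fattening in $H^{s'}$) open, one extracts along a subsequence $N_n(f)\to\infty$ for which $P_{\leq N_n(f)}f$ still lies in the good set, using the same approximation argument (adaptation of Lemma 2.27 of \cite{B94}) already invoked for \eqref{FlowStability}; this replaces the need for $P_{\leq N}$ to commute with the nonlinear flow.

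The main obstacle I anticipate is establishing the uniform-in-$t$ H\"older smoothing of the Duhamel term with good exponential-integrability: one must carefully propagate the gain over infinitely many unit time-intervals, summing $\sum_k k^{-1}\|v(k)\|_{H^{s}}$ (or the analogous $C^{s'}$ quantity) using the uniform bound \eqref{est2s} on $\|\{B_j(k)\}\|_{l^{2,s}}$, exactly in the spirit of the endgame of the proof of Theorem \ref{thbf}, but now one needs $C^{s'}$ rather than merely $L^\infty$ control, which forces the use of the almost-sure gain $s+\sigma$ over the deterministic $s$ threshold (so that $H^{s+\sigma}\hookrightarrow C^{s'}$ with room to spare) rather than a deterministic Strichartz estimate such as \eqref{PErStrInfty} that is unavailable. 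A secondary technical point is bookkeeping the $\kappa$-loss in \eqref{QuasiInvFin_general} against the Gaussian tail: since the tail is (stretched-)exponential in $R=t_m^\varepsilon$, any fixed $\kappa<1$ is harmless, so this is only cosmetic.
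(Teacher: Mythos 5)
There is a genuine gap, and it sits at the heart of your scheme: the passage from dyadic times to all times. Your plan controls $\|\Phi_{1,t_m}f\|_{C^{s'}}$ at $t_m=2^m$ by Borel--Cantelli and then proposes to ``interpolate over the unit-length time intervals between consecutive dyadic times using the local-in-time continuity estimate \eqref{contv}/Lemma \ref{InverseFlowStab}''. Those estimates propagate Sobolev norms $H^\sigma$ (and the only globally propagated regularity available is $H^s$ with $s\in(0,1)$ small, via \eqref{est2s}); they say nothing about the H\"older norm $C^{s'}$, and $H^{s}\not\hookrightarrow C^{s'}$ unless $s>s'+1/2$, which is exactly the regime excluded here. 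So between two good dyadic times, of mutual distance $t_m\gg 1$, you have no mechanism to bound $\|\Phi_{1,t}f\|_{C^{s'}}$: H\"older control at an intermediate time $t$ requires the solution at (a time $\delta$-close to) $t$ to still look like a Gaussian datum, so that the linear-evolution large-deviation bound (Lemma \ref{LemmaDelta}) plus the \emph{local} probabilistic smoothing (Proposition \ref{1/2SmoothingNLSPert}) apply on a short window $[T,T+\delta]$. This is why the paper pulls back exceptional sets at a \emph{dense} family of times (a partition of $[1,\infty)$ with mesh $\simeq c\,j^{-\varepsilon}/m$, refined so the number of windows up to time $T$ is only polynomial in $T$), applies the quantitative quasi-invariance \eqref{QuasiInvQuant} to each pulled-back bad set, and sums; a sparse (dyadic) family of pull-backs plus deterministic continuity cannot close the argument. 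For the same reason your claimed uniform-in-$t$ fixed-time tail bound $\rho_s(B_R(t))\lesssim e^{-cR^\theta}$ is not available as an input: iterating the Duhamel smoothing over long times already requires the datum at the start of each short window to lie outside an exceptional set, i.e.\ it presupposes the very propagation-of-randomness statement you are trying to prove; the $1/t$ factor helps sum Sobolev increments (Lemma \ref{ExpIntBound}) but does not substitute for randomness at intermediate times.

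A second, smaller but real, gap is the final step producing the truncations $P_{\leq N_n(f)}f$. Almost-sure control of $\Phi_{1,t}f$ does not transfer to $\Phi_{1,t}P_{\leq N}f$ by ``openness of the bad events plus the approximation lemma'': the adaptation of Lemma 2.27 of \cite{B94} gives convergence of the truncated-data evolution only on compact time intervals, in $H^{s'}$ rather than $C^{s'}$, with constants growing in $t$ — whereas the theorem requires a bound uniform in $t$ up to $t^{\varepsilon}$ growth. The paper handles this with a separate induction (Proposition \ref{QuantControlCstris}) comparing $\tilde\Phi_{1,t}(P_{\leq N_p}f)$ with $\tilde\Phi^{N_p}_{1,t}(P_{\leq N_p}f)$, accepting an error of size $(2Ct)^{t^2}N_p^{(s'-s)/2}$ and beating it by a diagonal choice of the subsequence $N_n(f)$; the subsequence is thus an output of the measure-theoretic construction (the $\bigcup_M\bigcap_{N\geq M}$ structure of the good set together with the quasi-invariance of the \emph{truncated} flows), not something you can recover a posteriori from the untruncated statement. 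Your overall architecture (linear large deviations, local Duhamel smoothing, quantitative quasi-invariance, Borel--Cantelli) matches the paper's ingredients, but without the dense-in-time pull-back of bad sets and the truncated-flow comparison the proof does not close.
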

In the proof of Theorem~\ref{QuantControlCsBis}  we first introduce a gauge transform which allows to use a well known local analysis in the Bourgain spaces. This analysis leads to local in time bounds of the  H\"older norms of typical solutions.
The passage to the global in time bound \eqref{QuantControlCsBisFinalBound} is much less standard. It is based on measure  propagation arguments, using in a crucial way the quantitative quasi-invariance of the measure $\rho_s$ established in Theorem~\ref{MainThm3}.
\subsection{A gauge transform}
In order to attack the problem, it will be useful to introduce the gauge transformation 
\begin{equation}\label{GaugeMap}
w^N(t) := e^{-2i \mu_N (v^N) \log t} v^N(t),
\end{equation}
where $v^N$ solves \eqref{NLSTruncated} 
and 
\begin{equation}
\mu_N (f) := \int_{\T} |P_{\leq N} f(x)|^2 dx.
\end{equation}
We recall that the operator $P_{\leq N}$ is the projection of the first $|k| \leq N$ Fourier modes of a function on~$\T$ and
 $P_{>N}  := \text{Id} - P_{\leq N}$.
We also recall that, from Lemma \ref{Lemma:MCN}
\begin{equation}\label{MuNGaugeMap}
\mu_N(v^N(t, \cdot)) := \int_{\T} |P_{\leq N} v^N(t, x)|^2 dx = \int_{\T} |P_{\leq N} v^N(1, x)|^2 dx .
\end{equation}
When $N= \infty$ we abbreviate 
$w := w^\infty$ (we similarly abbreviated $v := v^\infty$). 
When there is no ambiguity we abbreviate $\mu_N = \mu_N(v^N(t, \cdot)) =  \mu_N(w^N(t, \cdot))$ 
and $\mu_N = \mu_N(f)$ as well.
\\

In view of \eqref{GaugeMap} and \eqref{NLSTruncated}
The function $P_{\leq N} w^N$ is a solution of the following gauged truncated
cubic NLS:
\begin{equation}\label{NLSvIntrogauged}
i \partial_t P_{\leq N} w^N + \partial_{xx} P_{\leq N} w^N +
\frac{1}{t}  P_{\leq N} \mathcal{N} (P_{\leq N}w^N) 
=0, \quad \mathcal{N}(f) := (|f|^2 - 2 \mu_N) f, 
\end{equation}
defined for $ 1 \leq t <\infty$ and $x \in  \T$. 
Note that the nonlinearity $\mathcal{N}$ also depends on $N$ through~$\mu_N = \mu_N(f)$. We omit this dependence in order to simplify the notations (the value of $\mu_N$ will be always clear by the context).
On the high Fourier modes we simply have a gauged linear evolution
\begin{align}\label{HighModesLinearEvolutionGauged} 
P_{> N} w^N(t) = e^{- 2 i \mu_N (\ln t - \ln \tau) +  i(t-\tau) \Delta} P_{> N} w^N(\tau), \qquad \tau, t \in [1, \infty).
\end{align}
We will denote by $\tilde{\Phi}^{N}_{\tau,t}$ the solution map associated with \eqref{NLSvIntrogauged}-\eqref{HighModesLinearEvolutionGauged} and we abbreviate 
as usual $\tilde{\Phi}_{\tau,t} := \tilde{\Phi}^{N=\infty}_{\tau,t }$. From \eqref{GaugeMap} and \eqref{MuNGaugeMap} we have for all $N \in \N \cup \{ \infty\}$ and for all $\tau, t \in [1, \infty)$
\begin{equation}\label{ComparisonFlows}
\tilde{\Phi}^N_{\tau,t} f = e^{-2i \mu_{N} \ln t} \Phi^N_{\tau, t}  e^{2i \mu_{N} \ln \tau} f
= e^{-2i \mu_{N} (\ln t - \ln \tau)} \Phi^N_{\tau, t}   f,
\end{equation}
where we recall that $\Phi^{N}_{\tau,t}$ is the solution map associated with \eqref{NLSTruncated}.
The interest of the gauge transform \eqref{GaugeMap} is that it removes resonant interactions which is of importance in the local in time smoothing properties discussed in the next section.
\subsection{Local in time smoothing properties}
We will need some preliminary probabilistic local in time well-posedness results in which we focus  
on solutions of \eqref{NLSvIntrogauged} evolving from the initial data
prescribed at a generic initial time $T \in  [1, \infty)$.
Roughly speaking, we will show that the Duhamel contribution $w^N(t) - e^{i (t - T) \Delta} w(T)$ 
is almost surely $1/2 - \varepsilon $ smoothing, for all $\varepsilon >0$. We note that we 
use in a fundamental way the random oscillations of the initial data to get $1/2 - \varepsilon $ smoothing for $s$ close to zero, compared to the $\min\{2s,\frac 12\}$ smoothing in the deterministic case in \cite[Theorem 4.1, Lemma 4.2]{ETsmoothing}.  
\\

We denote by
$
B^{r}(R) := \left\{ f: \| f \|_{H^{r}} \leq R \right\}
$
the ball centred in zero of radius $R$, in the $H^{r}$ topology. 
Let $\beta>0$ and $s' < s$. We define the following exceptional set of functions 
\begin{equation}\label{DEf:OmegaEx}
\Omega_{\delta} :=  \Big\{ f : \| f \|_{H^{s'}}  >  \delta^{-\beta/3} \ \mbox{or} \ 
|\hat{f}(k)| (1 + |k|^{2s+1})^{\frac12}  > \delta^{- \frac13 \beta} \langle k \rangle^\varepsilon \, \mbox{for some $k \in \Z$}
 \Big\}  
 \end{equation}
 outside which we have local well-posedness and smoothing of the Duhamel contribution to the solution,
  in the sense of the following proposition. 
  The set $\Omega_{\delta}$ is exceptional in the sense that his measure is exponentially small with $\delta$. More precisely
 \begin{equation}\label{MeasureOmegaExceptPert}
\gamma_s(\Omega_{\delta}) \leq e^{-\delta^{-\beta}}
\end{equation} 
for all $\delta >0$ sufficiently small; this last assertion can be proved using some basic properties of the Gaussian random variables used to define the measure $\gamma_s$ in \eqref{TMIATSIntro}.
\begin{prop}\label{1/2SmoothingNLSPert} 
Let $s \in (0,1)$, $\sigma \in (1/4, 1/2)$ and $T \geq 1$. 
There exist $\gamma > \beta >0$, $\bar{\delta} \in (0,1)$ such that the following holds. 
For all $\delta \in (0, \bar{\delta})$,
all $$w(T) \in \Omega_{\delta}^\complement + B^{s + \sigma}(1)$$ 
and all $N \in 2^{\N} \cup \{ \infty \}$,
the solution to the (renormalized, truncated) cubic NLS \eqref{NLSvIntrogauged} 
on $t \in [T, T + \delta]$ has the form 
$$
P_{\leq N} w^N(t) = e^{i (t - T)  \Delta} P_{\leq N} w(T)  + \tilde{w}^N, 
$$
where the Duhamel contribution $\tilde{w}^N$ satisfies 
\begin{equation}\label{BoundWNSmoothing}
\| \tilde{w}^N \|_{X^{s + \sigma, \frac12 + \gamma}_{[T, T + \delta]}} \leq 1,
\end{equation}
where the restricted Bourgain norms are defined in \eqref{Bloc}.
\end{prop}
\begin{remark}
It is worth to stress out that the set $\Omega_{\delta}$ (and thus $\Omega_{\delta}^{\complement}$) 
is independent of $N$ and $T$ (recall definition \eqref{DEf:OmegaEx}). 
\end{remark}
\begin{remark}\label{OmegaDeltaStability}
By the definition \eqref{DEf:OmegaEx} of the set $\Omega_{\delta}$ one can  see that 
$f \in \Omega_{\delta}$ if and only if $\bar{f} \in \Omega_{\delta}$; in fact $e^{i \alpha} \Omega_{\delta} = \Omega_{\delta}$ for all  $\alpha \in \R$. Moreover  
$e^{i t \Delta} \Omega_{\delta}= \Omega_{\delta}$ for all $t \in \R$. The same properties are then true for $\Omega_{\delta}^{\complement}$.
Regarding $\Omega_{\delta}^{\complement}$, one can see that 
$P_{\leq N} \Omega_{\delta}^{\complement} \subset \Omega_{\delta}^{\complement}$ and 
$P_{> N} \Omega_{\delta}^{\complement} \subset \Omega_{\delta}^{\complement}$ 
for all $N \in \N$ as well.  
\end{remark}
\begin{remark}\label{SigmaRem}
The parameters $\gamma > \beta >0$ in the statement have to be taken sufficeintly small.
\end{remark}
\begin{proof}
We follow the method introduced in \cite{Bo96} in the (more difficult) 2d case.  
Since in our case $s>0$, we have that the resonant contribution is a part of the smoother term $\tilde{w}^N$. Hereafter $N$, $N_j$ will denote dyadic scales ($N$ can be taken also $= \infty$). The nonlinearity \eqref{NLSvIntrogauged} can be decomposed as
$$
\mathcal{N}(P_{\leq N} w^N(t, x))= \mathcal{N}_1(P_{\leq N} w^N(t, x)) + \mathcal{N}_2(P_{\leq N} w^N(t, x))
$$
where 
$$
\mathcal{N}_1(P_{\leq N} w^N(t, x)) := 
\sum_{\substack{n_1, n_2, n_3 \\ n_2 \neq n_1, n_3 \\ |n_1|, |n_2|, |n_3| \leq N}} 
\widehat{w^N(t,x)}(n_1) \overline{\widehat{w^N(t,x)}}(n_2) \widehat{w^N(t,x)}(n_3) e^{i (n_1 - n_2 + n_3) \cdot x}
$$
$$
\mathcal{N}_2(P_{\leq N} w^N(t, x)) :=  - \sum_{|n| \leq N} \widehat{w^N(t, x)}(n)  |\widehat{w^N(t, x)}(n)|^2 e^{i n \cdot x} .
$$

Since $w(T) \in \Omega_{\delta}^\complement + B^{s + \sigma}(1)$ we have 
$w(T) = f + g$ 
where $f \in \Omega_{\delta}^{\complement}$ and $\| g \|_{H^{s+\sigma}} \leq 1$. Thus  
$$e^{i (t - T)  \Delta} P_{\leq N} w(T) = e^{i (t - T)  \Delta} P_{\leq N} f +  G , \qquad G:= e^{i (t - T)  \Delta} P_{\leq N} g $$
and the function $G$ satisfies 
$$\|\eta(t - T) G\|_{X^{s+\sigma, \frac12 + \gamma} } \lesssim 1,$$ 
where $\eta : \R \to \R^{+}$ is a smooth compactly supported function such that $\eta = 1$ on $[-1,1]$. 

Thus, taking $\gamma > \beta$ and $\bar{\delta}$ sufficiently small (recall $\delta \in (0, \bar{\delta}$))
we can easily check that the statement follows 
once we have proved the 
multilinear estimate
\begin{equation}\label{fmdksldkngksl}
\| \mathcal{N}_\ell (P_{\leq N} w_1(J_1), P_{\leq N} w_2(J_2), P_{\leq N} w_3(J_3) ) \|_{X^{s + \sigma, -\frac12 + \gamma}_{[T, T+\delta]}} \lesssim \delta^{\gamma}\prod_{j = 1}^3 a(J_j) ,  
  \end{equation}
for all $f$ outside $ \Omega_{\delta}$, 
where\footnote{Here $R$ and $D$ stands for random and deterministic, respectively.} 
$\ell \in {1,2}$, $j \in \{1,2,3\}$, $J_j \in \{ R, D \}$, 
 $$
w_j(R) :=  \eta( t - T )
e^{i(t- T)\Delta} f,
\qquad a(R) :=  \delta^{^{-\beta/3}} \, ,
$$
$$
 w_j(D) \in X^{s + \sigma, \frac12 + \gamma},  \qquad a(D) 
 := \| w_j \|_{X^{s + \sigma, \frac12 + \gamma}}\, .
$$
We stress out that the exceptional set $\Omega_{\delta}$ and the implicit constant in \eqref{fmdksldkngksl} are independent of $N \in 2^{\N} \cup \{ \infty \} $ and $T \geq 1$. We will refer to $w_j(D)$ and $w_j(R)$ as deterministic and random contributions, respectively. This is 
because the size of $w_j(R)$ will be always estimated outside the exceptional set $\Omega_{\delta}$. Using a basic property of the restriction spaces, we see that the estimate \eqref{fmdksldkngksl} follows 
from the validity of
\begin{equation}\label{fmdksldkngkslBis}
\| \eta(t-T) \mathcal{N}_\ell(P_{\leq N} w_1(J_1), P_{\leq N} w_2(J_2), P_{\leq N} w_3(J_3) ) \|_{X^{s + \sigma, -\frac12 + 2 \gamma}} \lesssim \prod_{j = 1}^3  a(J_j) ,
\end{equation}
$\ell \in {1,2}$,  $j \in \{1,2,3\}$, $J_j \in \{ R, D \}$,
for $f$ outside $\Omega_{\delta}$.
Again, the exceptional set $\Omega_{\delta}$ and the implicit constant in \eqref{fmdksldkngkslBis} are 
independent of~$N \in 2^{\N} \cup \{ \infty \} $ and $T \geq 1$. We will first prove \eqref{fmdksldkngkslBis} for $\ell=1$, that is the hardest case.
We thus have to prove
\begin{multline}\label{mfkdlskdjgkldskjg}
 \Big\| \eta(t - T) \sum_{\substack{ N_1, N_2, N_3 \\ N_1, N_2,   N_3 \leq N }} \mathcal{N}_1(P_{N_1} w_1(J_1), P_{N_2} w_2(J_2), P_{N_3}w_3(J_3) ) \Big\|_{X^{s+ \sigma, - \frac12 + 2 \gamma}}
 \\
 \lesssim  \prod_{j = 1}^3 a(J_j) ,  
\end{multline}
where $N_j \in 2^{\N_0}$ are dyadic scales and $P_{1} := P_{\leq 1}$ while
$$
P_{N_j} := P_{\leq N_j} - P_{\leq \frac12 N_j}, \quad N_{j} \geq 2.
$$
First we focus on the contribution to \eqref{mfkdlskdjgkldskjg} of the terms of the sum for which
the deterministic part of the solution acts at the  highest frequency. The argument will be independent 
on whether the highest frequency is 
$N_1, N_2$ or $N_3$, so we assume w.l.g. $N_1 \geq N_2, N_3$ and that $J_1 = D$.
By duality and Fourier support considerations  we can bound this contribution as
\begin{multline}\label{fdklsòdlkfsòl}
\lesssim 
\sup_{\| \phi \|_{X^{-s-\sigma, \frac12 - 2 \gamma}} \leq 1} 
\\
\sum_{\substack{N_1, N_2, N_3, N_4 \\ N_4 \lesssim N_1  , \,\,\, N_2, N_3  \leq N_1} } 
\left| \int N_1^{s + \sigma} \left( \mathcal{N}_1(P_{N_1} w_1(D), P_{N_2} w_2(J_2), P_{N_3}w_3(J_3) ) \right) N_4^{-s - \sigma } P_{N_4} \phi  \right|.
\end{multline}
We first restrict to the contributions for which $N_2 \simeq N_1$ (the case $N_3 \simeq N_1$ will be the same switching the role of $N_2$ and $N_3$).
Using the embedding 
 $X^{0, \frac38} \hookrightarrow L^4_{t,x}$ 
we estimate this as
\begin{align*}
 \lesssim 
& \sup_{\| \phi \|_{X^{-s-\sigma, \frac12 - 2 \gamma}} \leq 1}
\\ 
 &\sum_{\substack{N_1, N_2, N_3, N_4 \\ N_4 \lesssim N_1  , \,\,\, N_2, N_3  \leq N_1} } 
 \| P_{N_1} w_1(D) \|_{X^{s+\sigma, \frac38}}  \| P_{N_2} w_2(J_2) \|_{X^{0, \frac38}}
 \| P_{N_3} w_3(J_3) \|_{X^{0, \frac38}}  \| P_{N_4} \phi \|_{X^{-s -\sigma, \frac38}} .
 \end{align*}
Using
 Cauchy-Schwarz in $N_1, N_2,  N_3, N_4$ and recalling that $N_2 \simeq N_1  \gtrsim N_3, N_4$ we have, for all~$\varepsilon >0$
\begin{align*}
& \lesssim_{\varepsilon} \sup_{\| \phi \|_{X^{-s-\sigma, \frac12 - 2 \gamma}} \leq 1} 
\\
&
 \!\! \Bigg( \sum_{\substack{N_1, N_2, N_3, N_4 \\ N_4 \lesssim N_1  , \,\,\, N_2, N_3  \leq N_1} } \!\!\!\!\!\!\!\!\!
 N_2^{4\varepsilon} \| P_{N_1} w_1(D) \|^2_{X^{s + \sigma, \frac38}}   \| P_{N_2} w_2(J_2) \|^2_{X^{0, \frac38}}
  \| P_{N_3} w_3(J_3) \|^2_{X^{0, \frac38}}   \| P_{N_4} \phi \|^2_{X^{- s -\sigma, \frac38}} \Bigg)^{\frac12}
\\ &
 \lesssim_{\varepsilon}
 \sup_{\| \phi \|_{X^{-s-\sigma, \frac12 - 2 \gamma}} \leq 1} 
  \|  w_1(D) \|_{X^{s + \sigma, \frac38}}   \|  w_2(J_2) \|_{X^{4 \varepsilon, \frac38}}
  \| w_3(J_3) \|_{X^{0, \frac38}}  \|  \phi \|_{X^{- s -\sigma, \frac38}} ,
 \end{align*}
 that for $\gamma, \varepsilon >0$ sufficiently small that $\frac38 < \frac 12 - 2 \gamma$ and 
 $4 \varepsilon < s$
is bounded by the r.h.s. of \eqref{fmdksldkngkslBis}, for $f$ outside the exceptional set $\Omega_{\delta}$. 
This last fact is clear when we look at the deterministic contributions 
$J_2 = D$ and/or $J_3 = D$, while for the random ones follows by the fact that
$$
\| w_j(R)\|_{X^{s', b}}  = \|\eta( t - T )
e^{i(t- T)\Delta} f \|_{X^{s', b}} \lesssim \| f\|_{H^{s'}} \lesssim 
\delta^{-\beta/3}, \qquad s' < s,
$$
where the last identity follows by  $f \in \Omega_{\delta}^\complement$.
\\

Coming back to \eqref{fdklsòdlkfsòl}, we must then consider the contributions for which $N_2,  N_3 \ll N_1$. By Fourier support consideration this forces $N_1 \simeq N_4$, thus we bound these contributions to \eqref{fdklsòdlkfsòl} with
\begin{align*}
\lesssim 
& \sup_{\| \phi \|_{X^{-s-\sigma, \frac12 - 2 \gamma}} \leq 1} 
\\ &
 \sum_{\substack{N_1, N_2, N_3, N_4 \\ N_4 \simeq N_1  , \,\,\, N_2, N_3  \leq N_1} } 
 \| P_{N_1} w_1(D) \|_{X^{s+\sigma, \frac38}}  \| P_{N_2} w_2(J_2) \|_{X^{0, \frac38}}
 \| P_{N_3} w_3(J_3) \|_{X^{0, \frac38}}  \| P_{N_4} \phi \|_{X^{-s -\sigma, \frac38}} 
 \end{align*}
Since the sum is restricted to $N_1 \simeq N_4$, 
 using Cauchy-Schwarz only in $N_1$ we get the estimate   
\begin{equation*} 
 \sup_{\| \phi \|_{X^{-s-\sigma, \frac12 - 2 \gamma}} \leq 1} 
 \| w_1(D) \|_{X^{s + \sigma, \frac38}}  \| \phi \|_{X^{- s -\sigma, \frac38}} 
  \sum_{\substack{N_2, N_3 } }
  \| P_{N_2} w_2(J_2) \|_{X^{0, \frac38}}
 \| P_{N_3} w_3(J_3) \|_{X^{0, \frac38}}    
\end{equation*}
then by Cauchy-Schwarz in $N_2, N_3$ we get, for all $\varepsilon >0$ 
 \begin{equation*}
 \lesssim_{\varepsilon}
 \sup_{\| \phi \|_{X^{-s-\sigma, \frac12 - 2 \gamma}} \leq 1} 
  \|  w_1(D) \|_{X^{s + \sigma, \frac38}}  \|  \phi \|_{X^{- s -\sigma, \frac38}}  \|  w_2(J_2) \|_{X^{\varepsilon, \frac38}}
  \| w_3(J_3) \|_{X^{\varepsilon, \frac38}}  ,
 \end{equation*}
 that, as before, for $\gamma, \varepsilon >0$ sufficiently small that $\frac38 < \frac 12 - 2 \gamma$ and 
 $ \varepsilon < s$,
is bounded by the r.h.s. of \eqref{fmdksldkngkslBis}, for $f$ outside $\Omega_{\delta}$. 
\\

To handle the remaining contributions to \eqref{mfkdlskdjgkldskjg},  namely when the random part of the solution decomposition acts as  
largest frequency, we recall a basic fact about
 restriction spaces. If $w \in X^{s + \sigma,b}$ we can represent it as 
 $$
 w(x,t) = \int \phi(\lambda) \sum_{n} b_{\lambda}(n) e^{i n x + i (n^2+\lambda) t } d \lambda,
 $$ 
 where 
 $$
 b_{\lambda}(n) := \frac{\widehat{w}(n, \lambda + n^2)}{\left( \sum_m \langle m \rangle^{2s + 2\sigma} |\widehat{w}(m, \lambda + n^2)|^2 \right)^{1/2}}
 $$
 and 
 $$
 \phi(\lambda):= \frac{1}{\langle \lambda \rangle^b} \big(  \sum_m \langle \lambda \rangle^{2b} \langle m \rangle^{2s +2 \sigma} |\widehat{w}(m, \lambda + n^2)|^2 \big)^{1/2}\,.
 $$
 Note that
 \begin{equation}\label{fjdksldkjngklsdkgnm}
 \sum_{n} \langle n \rangle^{2s + 2 \sigma} |b_{\lambda}(n)|^2   = 1
 \end{equation}
 and (by Cauchy-Schwarz) for $b>1/2$, 
 \begin{equation}\label{fmdksldkjngjnjskjgn}
 \int |\phi(\lambda)| d \lambda \lesssim \| w \|_{X^{s+ \sigma, b}} \, .
 \end{equation}
We are now ready to handle the remaining contributions to \eqref{mfkdlskdjgkldskjg}, meaning when the random part of the solution decomposition acts as  largest frequency. The argument will be again independent of which one is the largest frequency and on their order, thus we assume without loss of generality that $N_1 \geq N_2 \geq N_3$ and $J_1 = R$.
We introduce a slightly larger cutoff 
\begin{align}\label{def:kappa}
\kappa(t) := \eta(N_1^{-\varepsilon} t) , \qquad \varepsilon >0.
\end{align}
Recalling that $\eta = 1$ in $[0,1]$ and is compactly supported and taking $N_1$ large enough 
we have  $\kappa \eta = \eta$. Thus by standard properties of the restriction norm we can replace $\eta$ by $\kappa$
when we estimate the remaining contributions to \eqref{mfkdlskdjgkldskjg}.
\\
 
For $f \in \Omega_{\delta}^{\complement}$ we define
$$
g_{n} :=  \widehat{f}(n) (1 + |n|^{2s+1})^{\frac12}, \qquad n \in \Z.
$$
Note that, by the definition \eqref{DEf:OmegaEx}, we have 
 \begin{equation}\label{fmjdksldjgnjskdjngjdskjgn}
|g_{n} | \leq \delta^{- \frac13 \beta} \langle n \rangle^\varepsilon .
\end{equation} 
Case $J_2 = J_3 = R$. Letting $\mathcal{F}$ the space-time Fourier transform,
this contribution is bounded as 
\begin{align*}
 &\lesssim 
\sum_{\substack{N_1 \\ N_4 \lesssim N_1 \\ N_1 \geq N_2 \geq N_3 }} N_4^{s +  \sigma}   
\\
& \qquad 
\Bigg( \int \sum_{|n_4| \simeq N_4}  \frac{1_{\{\langle \tau - n_4^2 \rangle \leq N_1^C\} } }{\langle \tau - n_4^2 \rangle^{1 - 4 \gamma}}  
   \Bigg| \mathcal{F} \Big( \kappa(t-T) Z(x,t)
   \Big)(n_4,\tau) \Bigg|^2
d \tau
\Bigg)^{1/2} 
\\
\end{align*}
where
$$
Z(x,t) := 
\sum_{\substack{|n_j| \simeq N_j 
    \\ n_1, n_3 \neq n_2 }}   \frac{g_{n_1} e^{i n_1 x + i n_1^2 (t-T) }}{\langle n_1 \rangle^{s + 1/2} } 
  \frac{\overline{g_{n_2} } e^{- i n_2 x -  i n_2^2 (t-T) }}{\langle n_2 \rangle^{s + 1/2} }
   \frac{g_{n_3} e^{i n_3 x + i n_3^2 (t-T) }}{\langle n_3 \rangle^{s + 1/2} } 
$$
The presence of the cut-off  $1_{\{\langle \tau - n_4^2 \rangle \leq N_1^C\}}$ is justified by the fact that the 
complementary part of the integration region (very far from 
the paraboloid) gives an acceptable contribution as long as the constant $C$ is sufficiently large. Computing the space-time Fourier transform we get (here the hat is the Fourier transform in time)
\begin{align*}
&=
\sum_{\substack{N_1 \\ N_4 \lesssim N_1 \\ N_1 \geq N_2 \geq N_3 }} N_4^{s +  \sigma}   
  \Bigg( \int \sum_{|n_4| \simeq N_4}  \frac{1_{\{\langle \tau - n_4^2 \rangle \leq N_1^C\} }}{\langle \tau - n_4^2 \rangle^{1 - 4 \gamma}}  
\\
& \qquad 
   \Bigg| \!\!\!\! \sum_{\substack{|n_j| \simeq N_j \\ n_1 - n_2 + n_3 = n_4 
    \\ n_1, n_3 \neq n_2 }} \!\!\!\!
\widehat{\kappa}(\tau - n_1^2  + n_2^2 - n_3^2 )\frac{g_{n_1}}{\langle n_1 \rangle^{s + 1/2} }
\frac{\overline{g_{n_2} } }{\langle n_2 \rangle^{s + 1/2} }
   \frac{g_{n_3}}{\langle n_3 \rangle^{s + 1/2} }  \Bigg|^2
d \tau \Bigg)^{1/2}.
\end{align*}
Case $J_2 = D, J_3 = R$. This contribution is bounded as
\begin{align*}
 &\lesssim 
\sum_{\substack{N_1 \\ N_4 \lesssim N_1 \\ N_1 \geq N_2 \geq N_3 }} N_4^{s +  \sigma}   
\int  \phi(\lambda_2) \Bigg( \int \sum_{|n_4| \simeq N_4}  \frac{1_{\{\langle \tau - n_4^2 \rangle \leq N_1^C\} } }{\langle \tau - n_4^2 \rangle^{1 - 4 \gamma}}   
   \Bigg| \mathcal{F} \Big( 
\kappa(t - T) Z(x,t)
   \Big)(n_4,\tau) \Bigg|^2
d \tau
\Bigg)^{1/2} d \lambda_2,
\end{align*}
where 
$$
Z(x,t) :=  \sum_{\substack{|n_j| \simeq N_j 
    \\ n_1, n_3 \neq n_2 }}  \frac{g_{n_1} e^{i n_1 x + i n_1^2 (t-T) }  }{\langle n_1 \rangle^{s + 1/2} } 
  \overline{b_{\lambda_2}(n_2)} e^{- i n_2 x - i (n_2^2+\lambda_2) (t-T) } 
   \frac{g_{n_3} e^{i n_3 x + i n_3^2 (t-T) }}{\langle n_3 \rangle^{s + 1/2} } .
$$
Using the Minkowski integral inequality to switch the $\ell^2_n L^{2}_{\tau}$ norm and the integration over 
$d \lambda_2 $ and then  
\eqref{fmdksldkjngjnjskjgn}
we bound this by
\begin{align*}
\\
&\lesssim 
 \| w_2 \|_{X^{s + \sigma, \frac12 + \gamma}}
\sum_{\substack{N_1 \\ N_4 \lesssim N_1 \\ N_1 \geq N_2 \geq N_3 }} N_4^{s +  \sigma}   
  \Bigg( \int \sum_{|n_4| \simeq N_4}  \frac{1_{\{\langle \tau - n_4^2 \rangle \leq N_1^C\} }}{\langle \tau - n_4^2 \rangle^{1 - 4 \gamma}}  
\\
& \qquad 
 \sup_{\lambda_2}  \Bigg| \!\!\!\! \sum_{\substack{|n_j| \simeq N_j \\ n_1 - n_2 + n_3 = n_4 
   \\ n_1, n_3 \neq n_2 }} \!\!\!\! \widehat{\kappa}(t-n_1^2  + n_2^2 + \lambda_2 - n_3^2)
\frac{g_{n_1} }{\langle n_1 \rangle^{s + 1/2} } \overline{b_{\lambda_2}(n_2)} 
 \frac{g_{n_3} }{\langle n_3 \rangle^{s + 1/2} }  \Bigg|^2
d \tau \Bigg)^{1/2}.
\end{align*}
Case $J_2 = R, J_3 = D$. Similarly, this contribution is bounded as
\begin{align*}
 &\lesssim 
\sum_{\substack{N_1 \\ N_4 \lesssim N_1 \\ N_1 \geq N_2 \geq N_3 }} N_4^{s +  \sigma}   
\int  \phi(\lambda_3) \Big( \int \sum_{|n_4| \simeq N_4}  \frac{1_{\{\langle \tau - n_4^2 \rangle \leq N_1^C\} } }{\langle \tau - n_4^2 \rangle^{1 - 4 \gamma}}   
   \Big| \mathcal{F} \Big(  \kappa(t-T)
Z(x,t)
   \Big)(n_4,\tau) \Big|^2
d \tau
\Big)^{1/2} d \lambda_3
\end{align*}
where
$$
Z(x,t) : = \sum_{\substack{|n_j| \simeq N_j  
    \\ n_1, n_3 \neq n_2 }}  \frac{g_{n_1} e^{i n_1 x + i n_1^2 (t-T) }}{\langle n_1 \rangle^{s + 1/2} } 
  \frac{ \overline{ g_{n_2}} e^{- i n_2 x - i n_2^2 (t-T) }}{\langle n_2 \rangle^{s + 1/2} } 
  b_{\lambda_3}(n_3) e^{i n_3 x + i (n_3^2+\lambda_3) (t-T) } 
$$
and then 
\begin{align*}
&\lesssim 
 \| w_3 \|_{X^{s + \sigma, \frac12 + \gamma}}
\sum_{\substack{N_1 \\ N_4 \lesssim N_1 \\ N_1 \geq N_2 \geq N_3 }} N_4^{s +  \sigma}   
  \Bigg( \int \sum_{|n_4| \simeq N_4}  \frac{1_{\{\langle \tau - n_4^2 \rangle \leq N_1^C\} }}{\langle \tau - n_4^2 \rangle^{1 - 4 \gamma}}  
\\
& \qquad 
 \sup_{\lambda_3}  \Bigg| \!\!\!\! \sum_{\substack{|n_j| \simeq N_j \\ n_1 - n_2 + n_3 = n_4 
   \\ n_1, n_3 \neq n_2 }} \!\!\!\! \widehat{\kappa}( \tau -n_1^2  + n_2^2  - n_3^2 - \lambda_3  )
\frac{g_{n_1} }{\langle n_1 \rangle^{s + 1/2} }  
 \frac{\overline{g_{n_2}} }{\langle n_2 \rangle^{s + 1/2} } b_{\lambda_3}(n_3)  \Bigg|^2
d \tau \Bigg)^{1/2} .
\end{align*}
Case $J_2 = D, J_3 = D$. This contribution is bounded as 
\begin{multline*}
 \lesssim 
\sum_{\substack{N_1 \\ N_4 \lesssim N_1 \\ N_1 \geq N_2 \geq N_3 }} N_4^{s +  \sigma}   
\int \phi(\lambda_2) \phi(\lambda_3)
\\
 \Big( \int \sum_{|n_4| \simeq N_4}  \frac{1_{\{\langle \tau - n_4^2 \rangle \leq N_1^C\} } }{\langle \tau - n_4^2 \rangle^{1 - 4 \gamma}}  
  \Big| \mathcal{F} \Big( \kappa(t-T) Z(x,t) \Big)(n_4,\tau) \Big|^2
d \tau
\Big)^{1/2} d \lambda_2 d \lambda_3
\end{multline*}
where 
$$
Z(x,t) := \sum_{\substack{|n_j| \simeq N_j  
    \\ n_1, n_3 \neq n_2 }}  \frac{g_{n_1} }{\langle n_1 \rangle^{s + 1/2} } 
     \overline{b_{\lambda_2}(n_2)} e^{- i n_2 x  - i (n_2^2+\lambda_2) (t-T) }
     b_{\lambda_3}(n_3) e^{i n_3 x + i (n_3^2+\lambda_3) (t-T) }
$$
and then 
\begin{align*}
&\lesssim 
 \| w_2 \|_{X^{s + \sigma, \frac12 + \gamma}} 
  \| w_3 \|_{X^{s + \sigma, \frac12 + \gamma}} \sum_{\substack{N_1 \\ N_4 \lesssim N_1 \\ N_1 \geq N_2 \geq N_3 }} N_4^{s +  \sigma}   
  \Big( \int \sum_{|n_4| \simeq N_4}  \frac{1_{\{\langle \tau - n_4^2 \rangle \leq N_1^C\} }}{\langle \tau - n_4^2 \rangle^{1 - 4 \gamma}}  
\\
& \qquad 
  \sup_{\lambda_2, \lambda_3} \Big| \!\!\!\!\! \sum_{\substack{|n_j| \simeq N_j \\ n_1 - n_2 + n_3 = n_4 
    \\ n_1, n_3 \neq n_2 }} \!\!\!\!\!\! \widehat{\kappa}(\tau - n_1^2  + n_2^2 +  \lambda_2 - n_3^2 -  \lambda_3)
\frac{g_{n_1}}{\langle n_1 \rangle^{s + 1/2} } \overline{b_{\lambda_2}(n_2)} b_{\lambda_3}(n_3)  \Big|^2
d \tau \Big)^{1/2},
\end{align*}
here we used the Minkowski integral inequality to switch the $\ell^2_n L^{2}_{\tau}$ norm and the integration over 
$d \lambda_2 d\lambda_3 $ then  \eqref{fmdksldkjngjnjskjgn}. 
\\

Thus, if we  define, for $j = 1, 2 ,3$
 \begin{equation}\label{Def:Clambda}
 c_{\lambda_j, J_j}(n_j) := \left\{ 
 \begin{array}{ll}
 b_{\lambda_j}(n_j)  
 & \mbox{if $J_j = D$}   \\ & \\
\frac{1}{\langle n_j \rangle^{s + 1/2} } g_{n_j}    & \mbox{if $J_j = R$} ,
 \end{array}
 \right.
 \end{equation}
\begin{equation*}
\ell_j  =
\left\{ 
 \begin{array}{ll}
 1 
 & \mbox{if $J_j = D$}   \\ & \\
0    & \mbox{if $J_j = R$} 
 \end{array}
 \right.
 \end{equation*} 
\begin{equation*}
\mu_j  =
\left\{ 
 \begin{array}{ll}
 \| w_j \|_{X^{s + \sigma, \frac12 + \gamma}}
 & \mbox{if $J_j = D$}   \\ & \\
1   & \mbox{if $J_j = R$} 
 \end{array}
 \right.
 \end{equation*} 
we can rewrite the  four previous estimates compactly as
\begin{align}\label{kdlsdkknjfnjnjn}
&\lesssim 
\mu_2 \mu_3 \sum_{\substack{N_1 \\ N_4 \lesssim N_1 \\ N_1 \geq N_2 \geq N_3 }} N_4^{s +  \sigma}   
  \Big( \int \sum_{|n_4| \simeq N_4}  \frac{1_{\{\langle \tau - n_4^2 \rangle \leq N_1^C\} }}{\langle \tau - n_4^2 \rangle^{1 - 4 \gamma}}  \sup_{\lambda_2, \lambda_3} 
\\
& \nonumber \qquad 
  \Big|\!\!\!\!\!\!\!\! \sum_{\substack{|n_j| \simeq N_j \\ n_1 - n_2 + n_3 = n_4 
     \\ n_1, n_3 \neq n_2 }} \!\!\!\!\!\!\!\!
\widehat{\kappa}(\tau-n_1^2  + n_2^2 + \ell_2 \lambda_2 - n_3^2 - \ell_3 \lambda_3)) 
\frac{g_{n_1}}{\langle n_1 \rangle^{s + 1/2} } \overline{c_{\lambda_2, J_2}(n_2)} c_{\lambda_3, J_3}(n_3)  \Big|^2
d \tau \Big)^{1/2} .
\end{align}
Then, changing 
variables $\tau' = \tau - n_4^2$ and using that, if we restrict to 
$n_1 - n_2 + n_3 = n_4 $ we have
$$
n_1^2  - n_2^2  + n_3^2  - n_4^2 =   - 2 (n_1 - n_2)(n_3 - n_2), 
$$ 
this is further bounded by
\begin{align*}
&\lesssim \mu_2 \mu_3
\sum_{\substack{N_1 \\ N_4 \lesssim N_1 \\ N_1 \geq N_2 \geq N_3 }}  N_4^{s +  \sigma}  
\Bigg( \int \sum_{|n_4| \simeq N_4}  \frac{1_{\{\langle \tau' \rangle \leq N_1^C\} }}{\langle \tau' \rangle^{1 - 4 \gamma}}  
 \sup_{\lambda_2, \lambda_3}
\\
&
   \Bigg| \!\!\!\!\!\!  \sum_{\substack{|n_j| \simeq N_j \\ n_1 - n_2 + n_3 = n_4 
   \\ n_1, n_3 \neq n_2 }} \!\!\!\!\!\!\!\!\!
   \widehat{\kappa}(\tau' + 2 (n_1 - n_2)(n_3 - n_2) + \ell_2 \lambda_2 - \ell_3 \lambda_3)) 
\frac{g_{n_1}}{\langle n_1 \rangle^{s + 1/2} } \overline{c_{\lambda_2, J_2}(n_2)} c_{\lambda_3, J_3}(n_3)  \Bigg|^2
d \tau' \Bigg)^{1/2} .
\end{align*}
By H\"older inequality in $d \tau'$ we then get
\begin{align*}
& \lesssim \mu_2 \mu_3 
\sum_{\substack{N_1 \\ N_4 \lesssim N_1 \\ N_1 \geq N_2 \geq N_3 }} 
 N_4^{s +  \sigma} N_1^{4C \gamma}   
 \Bigg(  \sum_{|n_4| \simeq N_4} \sup_{\tau', \lambda_2, \lambda_3 }  
\\ &
   \Bigg| \!\!\!\!\!\! \sum_{\substack{|n_j| \simeq N_j \\ n_1 - n_2 + n_3 = n_4 
  \\ n_1, n_3 \neq n_2 }} \!\!\!\!\!\!\!\!\!
   \widehat{\kappa}(\tau' + 2 (n_1 - n_2)(n_3 - n_2) + \ell_2 \lambda_2 - \ell_3 \lambda_3)) 
\frac{g_{n_1}}{\langle n_1 \rangle^{s + 1/2} } \overline{c_{\lambda_2, J_2}(n_2)} c_{\lambda_3, J_3}(n_3)  \Bigg|^2 
\Bigg)^{1/2} .
\end{align*}
Recalling the definition \eqref{def:kappa} of $\kappa$ we see that the $n_1, n_2, n_3$ that satisfy 
$$2 (n_1 - n_2)(n_3 - n_2) \notin  - \tau' - \ell_2 \lambda_2 + \ell_3 \lambda_3  + (-\frac12 , \frac12)$$ 
give an acceptable contribution (namely $\lesssim  \mu_2\mu_3 \delta^{-\frac{\beta}{3} }    a(J_2)  a(J_3)$). 
Thus we can restrict the sum to the $n_1, n_2, n_3$ such that  
$$
2 (n_1 - n_2)(n_3 - n_2) \in  - \tau' - \ell_2 \lambda_2 + \ell_3 \lambda_3  + (-\frac12 , \frac12),
$$
defining the sum as zero if no triple $n_1, n_2, n_3$ is contained in this interval. 
For $\tau', \lambda_1, \lambda_2$ given, this interval contains  only one integer that we denote with
$\mu=\mu (\tau', \lambda_1, \lambda_2)$. 
Thus,
 letting 
\begin{align*}
\Omega  := & \sum_{\substack{N_1 \\ N_4 \lesssim N_1 \\ N_1 \geq N_2 \geq N_3 }}    N_4^{s +  \sigma } N_1^{4C \gamma + \varepsilon}   
\\ 
&
\Bigg( \sum_{|n_4| \simeq N_4} \sup_{\tau', \lambda_2, \lambda_3 }  
   \Bigg| \!\!\!\!\!\!\!\!\!\!\!\!\!\!\!\!\!\!\!\!\!\!\!\! \sum_{\substack{|n_j| \simeq N_j \\ n_1 - n_2 + n_3 = n_4 
   \\  (n_1 - n_2)(n_3 - n_2) = \frac12 \mu(\tau', \lambda_2, \lambda_3) \\ n_1, n_3 \neq n_2 }} \!\!\!\!\!\!\!\!\!\!\!\!\!\!\!\!\!\!\!\!\!\!\!\!
\frac{g_{n_1}}{\langle n_1 \rangle^{s + 1/2} } \overline{c_{\lambda_2, J_2}(n_2)} c_{\lambda_3, J_3}(n_3)  \Bigg|^2
 \Bigg)^{1/2}
\end{align*}
we have reduced to prove
$$
\Omega \lesssim  \delta^{-\frac{\beta}{3} }   \mu_2\mu_3  a(J_2)  a(J_3) .
$$
 Letting 
 $$ A_{n_4} := \{ (n_1, n_2, n_3) : |n_j| \simeq N_j, \,\,  n_1 - n_2 + n_3 = n_4, \,\, 2(n_1 - n_2)(n_3 - n_2) = \mu, \,\, n_1, n_3 \neq n_2 \}, $$
 \begin{equation} A := \{ (n_1, n_2, n_3) : |n_j| \simeq N_j,  \,\, 2(n_1 - n_2)(n_3 - n_2) = \mu, \,\, n_1, n_3 \neq n_2 \}, 
 \end{equation}
 we have 
 \begin{equation}\label{mfklsdknjgfksl}
 \bigcup_{n_4} A_{n_4} = A, \qquad A_{n_4} \cap A_{(n_{4})'} = \emptyset \quad  \forall \, n_4 \neq (n_4)' . 
 \end{equation}
 Recall that by the divisor bound and $N_4 \lesssim N_1$ one has 
 \begin{equation}\label{DivBound1}
 \# A_{n_4} \lesssim N_{1}^\varepsilon ,
 \end{equation}
 \begin{equation}\label{DivBound2}
\# A \lesssim N_{1}^\varepsilon N_{3}. \end{equation}
 Thus using Cauchy-Schwarz and \eqref{DivBound1} we get
\begin{align*}
& \Big|  \sum_{\substack{|n_j| \simeq N_j \\ n_1 - n_2 + n_3 = n_4 \\ (n_1 - n_2)(n_3 - n_2) = \frac12 \mu(\tau', \lambda_2, \lambda_3) \\ n_1, n_3 \neq n_2 }}
 \frac{g_{n_1}}{\langle n_1 \rangle^{s + 1/2} }  c_{\lambda_2, J_2}(n_2)  c_{\lambda_3, J_3}(n_3)  \Big|
 \\
 & \leq
  \sum_{(n_1, n_2, n_3) \in A_{n_4}}
\frac{|g_{n_1}|}{\langle n_1 \rangle^{s + 1/2} } | c_{\lambda_2, J_2}(n_2)| | c_{\lambda_3, J_3}(n_3)| 
\\ \nonumber 
 & \lesssim 
N_1^{\frac{\varepsilon}{2}} \Big( \sum_{(n_1, n_2, n_3) \in A_{n_4}}
\frac{|g_{n_1}|^2}{\langle n_1 \rangle^{2s + 1} } | c_{\lambda_2, J_2}(n_2)|^2 | c_{\lambda_3, J_3}(n_3)|^2  \Big)^{1/2} .
\end{align*}

We restricted to $\sigma \in (1/4, 1/2)$, thus in particular we have 
$s + \sigma < s + \frac{1}{2}  - \varepsilon$, taking $\varepsilon>0$ sufficiently small. Recalling 
 \eqref{fjdksldkjngklsdkgnm}, \ref{fmjdksldjgnjskdjngjdskjgn}, \eqref{Def:Clambda} 
 we thus have 
 $$
 | c_{\lambda_j, J_j}(n_j)| \leq  \frac{a(J_j)}{\langle n_j \rangle^{s + \sigma}},
 $$ 
 for all $f$ outside $\Omega_\delta$, 
 that plugged in the above estimate, together with \eqref{fmjdksldjgnjskdjngjdskjgn}, gives
\begin{align*}
& \Big|  \sum_{\substack{|n_j| \simeq N_j \\ n_1 - n_2 + n_3 = n_4 \\ (n_1 - n_2)(n_3 - n_2) = \frac12 \mu(\tau', \lambda_2, \lambda_3) \\ n_1, n_3 \neq n_2 }}
 \frac{g_{n_1}}{\langle n_1 \rangle^{s + 1/2} }  c_{\lambda_2, J_2}(n_2)  c_{\lambda_3, J_3}(n_3)  \Big|
 \\
 &\lesssim 
N_1^{\frac{3\varepsilon}{2}}  \delta^{-\frac13 \beta} \mu_1 \mu_2 a(J_2)   a(J_3)  \Big( \sum_{(n_1, n_2, n_3) \in A_{n_4}}
\frac{1}{\langle n_1 \rangle^{2s + 1} } \frac{1}{\langle n_2 \rangle^{2s + 2\sigma} }
\frac{1}{\langle n_3 \rangle^{2s + 2\sigma} }   \Big)^{1/2} .
\end{align*}
Recalling the definition of $\Omega$ and using this bound we arrive to
\begin{align*}
\Omega  \lesssim & \sum_{\substack{N_1 \\ N_4 \lesssim N_1 \\ N_1 \geq N_2 \geq N_3 }}    
N_4^{s +  \sigma } N_1^{4C \gamma + \frac52 \varepsilon}  
  \delta^{-\frac13 \beta}  a(J_2)   a(J_3) 
\\ 
&
\Bigg( \sum_{|n_4| \simeq N_4}  
 \Bigg( \sum_{(n_1, n_2, n_3) \in A_{n_4}}
\frac{1}{\langle n_1 \rangle^{2s + 1} } \frac{1}{\langle n_2 \rangle^{2s + 2\sigma} }
\frac{1}{\langle n_3 \rangle^{2s + 2\sigma} }  \Bigg)
 \Bigg)^{1/2}\,.
\end{align*}
 We first consider the case $N_4 \simeq N_1$. In this case
we can estimate:
 \begin{align*} 
 \Omega 
 & \lesssim  \delta^{-\frac13 \beta}  \mu_1  \mu_2 a(J_2)   a(J_3)
\\ 
&
\qquad
\sum_{\substack{N_1 \\ N_4 \simeq N_1 \\ N_1 \geq N_2 \geq N_3 }}   
N_4^{s +  \sigma } N_1^{4C \gamma + \frac52 \varepsilon}   \left( \sum_{|n_4| \simeq N_4}  
      \sum_{(n_1, n_2, n_3) \in A_{n_4}}
\frac{1}{\langle n_1 \rangle^{2s + 1} } \frac{1}{\langle n_2 \rangle^{2s + 2\sigma} }
\frac{1}{\langle n_3 \rangle^{2s + 2\sigma} }  \right)^{\frac12}  
\\ \nonumber
& \lesssim 
  \delta^{-\frac13 \beta}  \mu_1  \mu_2  a(J_2)   a(J_3) 
  \\ 
&
\qquad
 \sum_{\substack{N_1 \\ N_4 \simeq N_1 \\ N_1 \geq N_2 \geq N_3 }}  
N_4^{s +  \sigma } N_1^{4C \gamma + \frac52 \varepsilon}   \left(     \sum_{(n_1, n_2, n_3) \in A}
N_1^{-2s -1} N_2^{-2s - 2 \sigma} N_3^{-2s - 2 \sigma} \right)^{\frac12}  
\\ \nonumber
& \lesssim 
  \delta^{-\frac13 \beta}  \mu_1  \mu_2  a(J_2)   a(J_3)  \sum_{\substack{N_1 \\ N_4 \simeq N_1 \\ N_1 \geq N_2 \geq N_3 }}  N_4^{ \sigma - \frac{1}{2}  } 
N_1^{4C \gamma + \frac52 \varepsilon}  N_2^{-s -\sigma } N_3^{-s - \sigma}
 \left(     \sum_{(n_1, n_2, n_3) \in A} 1 \right)^{\frac12}
\\ \nonumber
& \lesssim 
  \delta^{-\frac13 \beta}   \mu_1  \mu_2  a(J_2)   a(J_3) 
  \sum_{\substack{N_4, N_1, N_2, N_3 \\ N_4 \simeq N_1 \\ N_1 \geq N_2 \geq N_3 }}  N_4^{ \sigma - \frac{1}{2} } 
  N_1^{ 4C \gamma + 3 \varepsilon} N_2^{-s -\sigma } N_3^{-s - \sigma + \frac12}\,.
 \end{align*}
Note that in the second inequality we used \eqref{mfklsdknjgfksl}, in the third inequality 
we used $N_1 \gtrsim N_4$ and the last inequality we used  \eqref{DivBound2}. 
\\

Using again $N_4 \simeq N_1$, the sum above is bounded by  
$$
\sum_{N_4, N_1, N_2, N_3}  N_4^{ \sigma - \frac{1}{2} + 4C \gamma+ 4 \varepsilon} N_1^{-\varepsilon} N_2^{-s -\sigma } N_3^{-s - \sigma + \frac12}.
$$
We claim that this is convergent, for all $s >0$ and $\sigma \in (1/4, 1/2)$, taking  $\gamma$ and $\varepsilon$ sufficiently small.
Indeed, using $N_2 \geq N_3  $ we estimate it
with
$$
\leq \sum_{N_4, N_1, N_2, N_3}  N_4^{ \sigma - \frac{1}{2} + 4C \gamma+ 3 \varepsilon} N_1^{-\varepsilon} N_2^{-s -\sigma + \frac14} N_3^{-s - \sigma + \frac14}
$$
and we note that the exponents are all negative as long as we take $\gamma, \varepsilon >0$ sufficiently small.
\\

We now consider the (only other possible) case $N_{4} \ll N_1$. This forces $N_{2} \simeq N_1$. Thus proceeding 
as above we reduce to prove the convergence of 
$$
  \sum_{\substack{N_4, N_1, N_2, N_3 \\ N_2 \simeq N_1 \\ N_2, N_3 \leq N_1 }}  N_4^{ \sigma - \frac{1}{2} } N_1^{3\varepsilon + 4C \gamma} N_2^{-s -\sigma } N_3^{-s - \sigma + \frac12} .
$$
Using $N_{2} \simeq {N_1} \geq N_3$ the sum is bounded by 
$$
  \sum_{N_4, N_1, N_2, N_3 }  N_4^{ \sigma - \frac{1}{2} } N_1^{- \varepsilon } 
  N_2^{-s -\sigma + 4\varepsilon + 4C \gamma +  \frac14 } N_3^{-s - \sigma + \frac14}
$$
that converges for all $s >0$, $\sigma \in (1/4, 1/2)$, as long as we take and $\varepsilon, \gamma >0$ sufficiently small.
\\

Thus, in order to conclude the proof, it remains to prove \eqref{fmdksldkngkslBis} for $j=2$. Proceeding as above we reduce to show the estimate  
$$
 \sum_{N_4} N_4^{2s + 2 \sigma + c \gamma}  \sum_{n \simeq N_4} 
|c_{\lambda, J_1}(n)|^2 | c_{\lambda_2, J_2}(n)|^2 | c_{\lambda_3, J_3}(n)|^2  \lesssim 
\mu_1 \mu_2 \mu_3 a(J_1)  a(J_2)  a(J_3) ,
$$
that follows by
\begin{align*}
& \sum_{N_4} N_4^{2s + 2 \sigma + c \gamma}  \sum_{n \simeq N_4} 
|c_{\lambda, J_1}(n)|^2 | c_{\lambda_2, J_2}(n)|^2 | c_{\lambda_3, J_3}(n)|^2 
\\ \nonumber
&
\qquad \qquad\qquad\qquad \lesssim_{\varepsilon} 
\mu_1 \mu_2 \mu_3 a(J_1) a(J_2)  a( J_3) 
\sum_{N_4} N_4^{-4s - 4 \sigma + c \gamma + \varepsilon} , \qquad \varepsilon >0,
\end{align*}
where, recalling the definition \eqref{Def:Clambda}, we have 
used \eqref{fjdksldkjngklsdkgnm} and \eqref{fmjdksldjgnjskdjngjdskjgn}. 
Taking $\gamma >0$ and $\varepsilon >0$ sufficiently small this is indeed finite for all 
$s >0$ and $\sigma >0$. The proof is then concluded.   
\end{proof}
%
In the following we will prove some additional results on the decomposition of the solution~$w$,  
which proof is similar to that of Proposition \ref{1/2SmoothingNLSPert}.
We recall that    
$$
B^{r}(R) := \left\{ f: \| f \|_{H^{r}} \leq R \right\}.
$$
\begin{prop}\label{1/2SmoothingNLSPertTris}
Let $s \in (0,1)$, $ 0 < s' <s$, $\sigma \in (1/4,1/2)$ and $T\geq 1$. 
There exist $\gamma > \beta >0$,  $\bar{\delta} \in (0,1)$ 
and $\bar{N} \geq 1$ such that the following holds.
Let $\delta \in (0, \bar{\delta})$, $N \in 2^{\N}$, $N > \bar N$ and $\rho \in [0,1]$.
Let $w(T) \in \Omega_{\delta}^\complement$
and $e_N \in P_{> N} \Omega_{\delta}^\complement$. Let
$$
w'(T) \in w(T) + e_N  +  B^{s + \sigma}(\rho) .
 $$
We have 
$$
w'(t)  = w(t) + e^{i(t-T) \Delta} e_N + r(t), \qquad t \in [T, T + \delta ], 
$$
with
\begin{equation}\label{DeltaBound}
\| r \|_{X^{s' + \sigma, \frac12 + \gamma}_{[T, T + \delta ]}} \leq C  ( \rho + N^{s'-s}),
\end{equation}
for some $C >0$.
Here
 $w$ and $w'$ are solutions of the (renormalized) cubic NLS \eqref{NLSvIntrogauged} with $N = \infty$ and with 
 datum $w(T)$ and $w'(T)$, respectively.
\end{prop}
\begin{proof}
By assumption
\begin{equation}\label{fndjskdjgjkdsjrngf}
w'(T) = w(T) +  e_N + h,
\end{equation}
where $e_N \in P_{> N} \Omega_{\delta}^{\complement}$ and $h \in B^{s + \sigma}(\rho)$. 
Let $t \in [T, T + \delta]$. By Proposition \ref{1/2SmoothingNLSPert} we have 
\begin{equation}\label{fndjsk1}
w(t) = e^{i (t - T)  \Delta} w(T) + \tilde{w}(t), \\ \quad  
\| \tilde{w} \|_{X^{s + \sigma, \frac12 + \gamma}_{[T, T + \delta]}} \leq 1.
\end{equation}
By Remark \ref{OmegaDeltaStability} we also have 
$e_N \in \Omega_{\delta}^{\complement}$. Thus again using Proposition \ref{1/2SmoothingNLSPert} 
we have as well
\begin{equation}\label{fndjsk2}
w'(t) = e^{i (t - T)  \Delta} w'(T) + \tilde{w'}(t), \\ \quad  
\| \tilde{w'} \|_{X^{s + \sigma, \frac12 + \gamma}_{[T, T + \delta]}} \leq 1.
\end{equation}
We can represent the solutions via the Duhamel formula
\begin{align}\label{w'Eq}
w'(t) & = e^{i (t - T)  \Delta}  w'(T)  + \int_{0}^{t} e^{i (\tau - T)  \Delta} \mathcal{N}(w'(\tau)) \, d \tau  
\\ \nonumber
&= e^{i (t - T)  \Delta}  (w(T) +  e_N + h)  + \int_{0}^{t} e^{i (\tau - T)  \Delta} \mathcal{N}( w'(\tau)) \, d \tau,
\end{align}
\begin{equation}\label{wEq}
w(t)  = e^{i (t - T)  \Delta}  w(T)  + \int_{0}^{t} e^{i (\tau - T)  \Delta} \mathcal{N}(w(\tau)) \, d \tau.
\end{equation}
Letting 
\begin{equation}\label{Deltadsaegnjds}
r(t) := w'(t)  -  w(t) - e^{i(t-T) \Delta} e_N 
\end{equation}
it suffices to prove \eqref{DeltaBound}. 
Taking 
 the difference of \eqref{w'Eq}-\eqref{wEq} we obtain the following equation
for $r$:
\begin{equation}\label{mkdfsldfkngjdkls}
r(t)  = e^{i (t - T)  \Delta}   h  +
 \int_{0}^{t} e^{i (\tau - T)  \Delta} \big( \mathcal{N}(w'(\tau)) - \mathcal{N}(w(\tau)) \big) \, d \tau  .
\end{equation}
Looking at the nonlinearity as a trilinear operator (as in the proof of Proposition \ref{1/2SmoothingNLSPert}), and recalling \eqref{fndjskdjgjkdsjrngf}-\eqref{Deltadsaegnjds} we see that  
$\mathcal{N}(w', w', w') - \mathcal{N}(w, w,w)$ can be rewritten as a linear combination of terms of the form
$$
\mathcal{N}(f_1, f_2, f_3),
$$
where one of the $f_j$  belongs to the set  
$$
F:= \{ r, e^{i ((\cdot) - T)\Delta} e_N, \bar{r}, \overline{e^{i ((\cdot) - T)\Delta} e_N} \}
$$
while the other two $f_j$ belong to the set 
\begin{align*}
E
 =  \{ & e^{i ((\cdot) - T)}  e_N , e^{i ((\cdot) - T)} h ,  e^{i ((\cdot) - T)  \Delta} w(T),  \tilde{w'}(\cdot) , \tilde{w}(\cdot),  \\ 
 & \overline{e^{i ((\cdot) - T)}  e_N} , \overline{e^{i ((\cdot) - T)} h} , \overline{e^{i ((\cdot) - T)  \Delta} w(T)}, 
 \overline{\tilde{w'}(\cdot)} , \overline{\tilde{w}(\cdot)}
 \}.
\end{align*}
We recall $w(T) \in \Omega_{\delta}^{C}$. We also note $e^{i ((\cdot) - T)}  e_N \in P_{> N} \Omega_{\delta}^{\complement} $ (see Remark \ref{OmegaDeltaStability}) and
\begin{equation}\label{fdjalkdnfjdklsjkfgn}
\|e^{i ((\cdot) - T)} h \|_{X^{s + \sigma, \frac12 + \gamma}_{[T, T + \delta]}}  \lesssim  \| h \|_{H^{s + \sigma}} \lesssim \rho.
\end{equation}
Recalling also the estimates  \eqref{fndjsk1}-\eqref{fndjsk2} for $\tilde{w}$, $\tilde{w'}$,  we can use  the multilinear estimates~\eqref{fmdksldkngksl} to show that 
$$
\|  \int_{0}^{t} e^{i (\tau - T)  \Delta} \big( \mathcal{N}(w(\tau)) 
- \mathcal{N}(w'(\tau)) \big) \, d \tau \|_{X^{s' + \sigma, \frac12 + \gamma}_{[T, T + \delta]}} \leq 
\frac{1}{2} \| r \|_{X^{s' + \sigma, \frac12 + \gamma}_{[T, T + \delta]}} +  CN^{s'-s }.
$$
Combining with \eqref{mkdfsldfkngjdkls}-\eqref{fdjalkdnfjdklsjkfgn} we get
$$
\| r(t) \|_{X^{s' + \sigma, \frac12 + \gamma}_{[T, T + \delta]}}  \leq 2 \Big(
\| e^{i (t - T)  \Delta}  h \|_{X^{s' + \sigma, \frac12 + \gamma}_{[T, T + \delta]}} +
C N^{s'-s} \Big) 
\leq 2 \Big(
\rho +
C N^{s'-s} \Big)  
$$
from which we deduce \eqref{DeltaBound}.
\end{proof}
\begin{prop}\label{1/2SmoothingNLSPert2}
Let $s \in (0,1)$, $0< s' <s $, $\sigma \in (1/4, 1/2)$ and $T \geq 1$. 
There exist $\gamma > \beta >0$, $\bar{\delta} \in (0,1)$ and $\bar{N} \geq1$ such that the following holds. 
Let  $\delta \in (0, \bar{\delta})$ and $N \in 2^{\N}$, $N > \bar{N}$. 
Let $w(T) \in P_{\leq N} \Omega_{\delta}$. 
We then have
$$
w(t)  = w^N(t) + r(t), \qquad t \in [T, T + \delta ], 
$$
with 
$$
\| r  \|_{X^{s' + \sigma, \frac12 + \gamma}_{[T, T+ \delta]}} 
\leq   C N^{s' -s},
$$
for some $C>0$.
Here
 $w^N$ is a solution of the (renormalized, truncated) cubic NLS \eqref{NLSvIntrogauged} with 
 datum $w(T) = P_{\leq N} w(T)$
and $w = w^{\infty}$.
\end{prop}
\begin{proof}
We have $w(T) = P_{\leq N} w(T)$ (since $w(T) \in P_{\leq N} \Omega_{\delta}$). 
This also implies  
$w^N(t) = P_{\leq N} w^N (t)$. 
Using these facts, the Duhamel representations of $w$ and $w^N$ rewrite as
\begin{align*}
w(t)  
& 
= e^{i (t - T)  \Delta}  P_{\leq N} w(T)  
+   \tilde{w}
\\
&
= e^{i (t - T)  \Delta}  P_{\leq N} w(T)  
+   \int_{0}^{t} e^{i (\tau - T)  \Delta} \mathcal{N}(w(\tau)) \, d \tau , 
\end{align*}
\begin{align*}
w^{N}(t)  
& 
= e^{i (t - T)  \Delta}  P_{\leq N} w(T)  
+ \tilde{w}^N 
\\&
= e^{i (t - T)  \Delta}  P_{\leq N} w(T)  
+  P_{\leq N}  \int_{0}^{t} e^{i (\tau - T)  \Delta} \mathcal{N}( w^{N}(\tau)) \, d \tau  ,
\end{align*}
we get
\begin{align}\label{CombWT}
 w(t) - w^N(t)
& =   
 P_{\leq N}  \int_{0}^{t} e^{i (\tau - T)  \Delta} \big( \mathcal{N}(w(\tau))  -  \mathcal{N}(w^N(\tau)) \big)\, d \tau    
\\ \nonumber
& +   
 P_{> N}  \int_{0}^{t} e^{i (\tau - T)  \Delta} \mathcal{N}(w(\tau) ) \, d \tau.
\end{align}
Looking at the nonlinearity as a trilinear operator (as in the proof of Proposition \ref{1/2SmoothingNLSPert}) 
we see that  
$\mathcal{N}(w, w, w) - \mathcal{N}(w^N, w^N, w^N)$ can 
be rewritten as a linear combination of terms of the form
$$
\mathcal{N}(f_1, f_2, f_3),
$$
where one of the $f_j = w(\cdot) -  w^N(\cdot)$ or its conjugate,
while the other two $f_j$ belong to the set 
\begin{equation*}
F  :=  \{ e^{i ((\cdot) - T)} P_{\leq N} w(T), \tilde{w}, \tilde{w}^N, \overline{e^{i ((\cdot) - T)} P_{\leq N} w(T)}, 
\overline{\tilde{w}}, \overline{\tilde{w}^N }   \} .
\end{equation*}
Similarly,
we see that  
$\mathcal{N}(w, w, w)$ can 
be rewritten as a linear combination of terms of the form
$$
\mathcal{N}(f_1, f_2, f_3),
$$
where any of the $f_j$ belongs to 
 $$
G:= \{ e^{i ((\cdot) - T)  \Delta} P_{\leq N} w(T),  \tilde{w}(t), \overline{e^{i ((\cdot) - T)  \Delta} P_{\leq N} w(T)}, 
\overline{ \tilde{w}(t)} \}. 
$$    
Since $P_{\leq N}  w(T)  \in P_{\leq N} \Omega_{\delta}^{\complement} \subset \Omega_{\delta}^{\complement}$
we get by Remark \eqref{OmegaDeltaStability} and Proposition \ref{1/2SmoothingNLSPert} that the 
Duhamel contributions $\tilde{w}$ and $\tilde{w}^N$ satisfy the smoothing estimate  \eqref{BoundWNSmoothing}.
We can thus use the multilinear estimates~\eqref{fmdksldkngksl} to show that 
$$
\| P_{\leq N}  \int_{0}^{t} e^{i (\tau - T)  \Delta} \big( \mathcal{N}(w(\tau))  
-  \mathcal{N}(w^N(\tau)) \big)\, d \tau    \|_{X^{s' + \sigma, \frac12 + \gamma}_{[T, T + \delta]}} \leq 
\frac{1}{2} \| w(\cdot) -  w^N(\cdot) \|_{X^{s' + \sigma, \frac12 + \gamma}_{[T, T + \delta]}} +  CN^{s'-s }
$$
as well as
$$
\| P_{> N}  \int_{0}^{t} e^{i (\tau - T)  \Delta}  \mathcal{N}(w(\tau))  
 \, d \tau    \|_{X^{s' + \sigma, \frac12 + \gamma}_{[T, T + \delta]}} \leq 
  CN^{s'-s }.
$$
Combining these bounds with \eqref{CombWT} we get 
the desired statement.
\end{proof}
\subsection{Global in time analysis}
We will need the following large deviation estimate.   
\begin{lemma}\label{LemmaDelta}
Let $\delta >0$ and $0 \leq s' < s$. We have 
 $$
 \gamma_s \left( \big\{ f  : \sup_{t \in \R} \| e^{i t\Delta} f \|_{\mathcal C^{s'}(\T)}  > \delta^{-1}  \big\} \right) \lesssim e^{- C \delta^{ - 2 }} ,
  $$  
  where the constants here depend only on $s,s'$.
\end{lemma}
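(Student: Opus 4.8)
The plan is to reduce the statement to a standard Gaussian large-deviation (or chaos) estimate for the random function $e^{it\Delta} f$ where $f$ is the series \eqref{TMIATS}. First I would note that $e^{it\Delta}$ acts on $f=\sum_k \frac{g_k^\omega}{(1+|k|^{2s+1})^{1/2}}e^{ixk}$ simply by inserting a phase $e^{-itk^2}$, so $e^{it\Delta}f = \sum_k \frac{g_k^\omega e^{-itk^2}}{(1+|k|^{2s+1})^{1/2}}e^{ixk}$; in particular, for each fixed $(t,x)$, $e^{it\Delta}f(x)$ is a (complex) Gaussian random variable whose variance is $\sum_k (1+|k|^{2s+1})^{-1}$, which is finite since $s>0$, and the same is true for the Hölder difference quotients $\frac{e^{it\Delta}f(x)-e^{it\Delta}f(y)}{|x-y|^{s'}}$ as long as $s'<s$ (here one uses that $|e^{ixk}-e^{iyk}|\lesssim |x-y|^{s'}\langle k\rangle^{s'}$ together with $2s'-2s-1<-1$). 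This gives a uniform bound $\sup_{t,x,y}\big(\mathbb{E}|\cdots|^2\big)^{1/2}\le C(s,s')$.

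The core step is then to control the supremum over the (noncompact in $t$, and infinite-dimensional) parameter space. I would fix a smooth bump $\psi$ and observe that by the Sobolev embedding $W^{\sigma,p}(\T)\hookrightarrow C^{s'}(\T)$ for $\sigma$ slightly above $s'$ and $p$ large, and by the fact that $e^{it\Delta}$ commutes with $|D|^\sigma$, it suffices to bound $\sup_t \||D|^\sigma e^{it\Delta}f\|_{L^p(\T)}$; then take $L^q_\omega$ norms with $q\gg p$ and use the Minkowski inequality together with the Wiener-chaos (Khinchin/Gaussian hypercontractivity) estimate $\|\sum_k c_k g_k^\omega\|_{L^q_\omega}\lesssim \sqrt{q}\,\|\{c_k\}\|_{\ell^2}$ applied pointwise in $x$ (and uniformly in $t$). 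This produces $\big\|\sup_t \|e^{it\Delta}f\|_{C^{s'}}\big\|_{L^q_\omega}\lesssim \sqrt{q}$ for every finite $q$, and the exponential tail $\gamma_s(\{\sup_t\|e^{it\Delta}f\|_{C^{s'}}>R\})\lesssim e^{-cR^2}$ follows by the usual Chebyshev-optimization over $q$, which gives exactly the claimed $e^{-C\delta^{-2}}$ upon setting $R=\delta^{-1}$. (One subtlety: the supremum over $t\in\R$ is a priori uncountable; I would handle it by first restricting to $t$ in a bounded interval via almost-periodicity in $t$ — the Fourier support is in $\Z$ so the function is $2\pi$-periodic-like in a suitable sense after the phase is accounted for — or more simply by a discretization/continuity argument in $t$ using that $\partial_t e^{it\Delta}f$ has the same structure with an extra factor $k^2$, which is still summable after trading a tiny bit of the gap $s-s'$.)

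The main obstacle I anticipate is precisely this uniformity in $t\in\R$: a crude union bound over a $t$-net would cost a factor growing with the length of the time interval and destroy the clean $e^{-C\delta^{-2}}$ tail. The clean way around it is to exploit that $e^{it\Delta}f$ is an almost periodic function of $t$ with values in $C^{s'}(\T)$ (its $t$-Fourier spectrum is contained in $\{-k^2\}_{k\in\Z}$), so its supremum over all of $\R$ is controlled by its supremum over a single period-type window up to harmless factors; alternatively one proves a Kolmogorov-type continuity estimate in $t$ with a Gaussian modulus and integrates the tail. Either route reduces the genuinely global-in-$t$ statement to the compact-parameter Gaussian-chaos estimate described above. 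Once that reduction is in place, everything else is the routine Gaussian large-deviation computation, and the exponent $2$ in $e^{-C\delta^{-2}}$ is dictated by the quadratic (first Wiener chaos) nature of $e^{it\Delta}f$.
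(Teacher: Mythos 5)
Your overall strategy (reduce to Gaussian moment bounds via hypercontractivity, then Chebyshev/optimization in $q$ to get the $e^{-C\delta^{-2}}$ tail, with the exponent $2$ coming from the first chaos) is the same as the paper's, and your reduction of $t\in\R$ to a compact window is fine: since the time frequencies $k^2$ are integers, $e^{it\Delta}f$ is exactly $2\pi$-periodic in $t$, which is precisely the paper's first step. The genuine gap is in how you pass the supremum over $t$ inside the $L^q_\omega$ moment. Minkowski's inequality lets you exchange $L^q_\omega$ with $L^p_x$ (since $q\geq p$), but not with $\sup_t$: the inequality between $\bigl\| \sup_t \|e^{it\Delta}f\|_{C^{s'}}\bigr\|_{L^q_\omega}$ and $\sup_t \bigl\| \|e^{it\Delta}f\|_{C^{s'}}\bigr\|_{L^q_\omega}$ goes the wrong way, so the claimed bound $\lesssim\sqrt q$ for the moment of the supremum is not established by the argument as written; what you actually get is only the supremum of the moments.

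Your proposed repair -- a discretization/continuity argument in $t$ using $\partial_t e^{it\Delta}f$, claimed to be ``summable after trading a tiny bit of the gap $s-s'$'' -- fails quantitatively: $\partial_t e^{it\Delta}f = i\partial_{xx}e^{it\Delta}f$ carries a factor $k^2$, so its coefficients have size $\sim |k|^{3/2-s}|g_k^\omega|$ and one would need to give up two full derivatives, far more than the available gap $s-s'<1$; for $s\in(0,1)$ the time derivative of the full series is not even a function almost surely. The missing idea is a Littlewood--Paley decomposition before handling the time supremum: on a block $P_N f$ the loss from the time variable is only $N^{2}$-sized and can be absorbed, either by an $N$-dependent net of $\sim N^2$ points in the time window, or, as the paper does, by Bernstein in both variables, using that the space-time Fourier support of $e^{it\Delta}P_N f$ lies in an $N\times N^2$ rectangle to convert $L^\infty_{x,t}$ into $L^p_{x,t}$ at the cost of $N^{3/p}$, which is then summed over dyadic $N$ against the $N^{-s+\varepsilon}$ gain from hypercontractivity. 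With that frequency localization inserted, the rest of your computation (hypercontractivity, Minkowski, optimization in $q$) goes through and essentially coincides with the paper's proof; without it, the step controlling $\sup_{t}$ is a genuine gap.
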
 
\begin{proof} 
Let $N \in 2^{\N}$. 
We have
for all $p \geq 1$ and $\varepsilon >0$:
\begin{multline*}
\sup_{t \in \R} \| e^{i t\Delta} f \|_{\mathcal C^{s'}(\T)} = \sup_{t \in \T} \| e^{i t\Delta} f \|_{ \mathcal C^{s'}(\T)} 
\\
\lesssim \sum_N  N^{s'} \|  e^{i t\Delta} P_N f \|_{L^\infty_{x,t}}
 \lesssim \sum_{N \geq 1} N^{\frac3p + s' + \varepsilon} \|  e^{i t\Delta} P_N f \|_{L^p_{x,t}(\T^2)},
\end{multline*}
where we have used 
the Bernstein inequality and the fact that the space-time Fourier transform of 
$e^{i t\Delta} P_N f$ is supported in a spacetime rectangle of dimensions $N \times N^2$.
Letting 
$$
\sigma_N := c_0 N^{-\varepsilon},
$$ 
with $c_0 >0$ sufficiently small that $\sum_{N \geq 1} \sigma_N < 1$, we have
\begin{multline}
\label{fkalsdkjfalksd}
 \gamma_s \left(  \sum_{N \geq 1} \|  e^{i t\Delta} P_N f \|_{L^p_{x,t}} >  N^{-\frac3p-s' - \varepsilon} \delta^{-1} \right)
 \\
 \lesssim
 \sum_{N \geq 1} 
 \gamma_s \left(   \|  e^{i t\Delta} P_N f \|_{L^p_{x,t}} >  \sigma_N N^{-\frac3p-s' - \varepsilon } \delta^{-1} \right).
\end{multline}
On the other hand, by Gaussian hypercontractivity we have, for all $q \geq 1$
$$
\| e^{i t\Delta} P_N f \|_{L^{q}(\gamma_s)} \lesssim_{\varepsilon} \sqrt{q} N^{-s + \varepsilon}
$$
thus by Minkowski's inequality we obtain for all  $q \geq p\geq2$
the following bound (hereafter we omit the dependence on $\varepsilon$ of the constants) 
\begin{align*}
\left\|  \| P_N e^{it \Delta} f \|_{L^p_{x,t}(\T^2)} \right\|_{L^{q}(\gamma_s)}
 \leq \Bigl\|  \| P_N  e^{it \Delta} f \|_{L^q(\gamma_s)} \Bigr\|_{L^p_{x,t}(\T^2)} 
\lesssim \sqrt{q} N^{-s + \varepsilon} 
\end{align*}
that implies 
$$
 \gamma_s \left(  \|  e^{i t\Delta} P_N f \|_{L^p_{x,t}(\T^2)} >  \lambda \right) \lesssim e^{-C \lambda^2 N^{2s - 2 \varepsilon} }.
$$
Thus letting $\lambda = N^{-\frac3p-s' - \varepsilon} \delta^{-1}$ 
we can bound the right hand side of \eqref{fkalsdkjfalksd} as
\begin{align*}
& \lesssim \sum_{N \geq 1} \gamma_s \left(  \|  e^{i t\Delta} P_N f \|_{L^p_{x,t}(\T^2)} >  \sigma _N N^{-\frac3p-s' - \varepsilon} \delta^{-1} \right)
 \\ &
 \lesssim  \sum_{N \geq 1}  e^{-C  \delta^{-2} \sigma_N N^{2s - 4\varepsilon -\frac6p-2s'} } \lesssim
  \sum_{N \geq 1} 
 e^{-C c_0  \delta^{-2}  N^{2s  - 2s' - 5 \varepsilon - \frac6p } } \lesssim
 e^{-C c_0  \delta^{-2}  } ,
\end{align*}
where the last inequality holds as long as 
$$
2s - 2s' - 5 \varepsilon - \frac6p  >0,
$$
that for $s' < s$ is true as long as we take $p$ sufficiently large and $\varepsilon >0$ sufficiently small.
%
\end{proof}
We will now define the exceptional set $E$ outside which the statement of Theorem \ref{QuantControlCsBis} is valid. We will then prove that this set has measure zero. The definition involves two sequences of time scales that will be used in the proof of  
the upper bounds \eqref{GlobSmooth2Preq}-\eqref{GlobSmooth2PreqTWU} and \eqref{QuantControlCstrisFinalBound}, respectively.
\\

We first define the first sequence of time scales $T_j$, $j \in \N^*$ by 
\begin{equation}\label{TjDef}
 T_1 = 1, \qquad  T_{j+1} - T_j = c j^{-\varepsilon},
 \end{equation}
where $\varepsilon \in (0,1) $ and $c >0$. In particular 
$T_j \to \infty$ as $j \to \infty$ (in fact  $T_j\simeq j^{1-\varepsilon}$). 
Then, for any $m \in \N^*$  we define a family of refinements of the sequence obtained by dividing each interval
$[T_{j}, T_{j+1}]$ into $m$ equal subintervals, namely we set 
$$
T^m_{j, k} = T_{j} + \frac{k}{m} (T_{j+1} - T_j), \qquad k = 0, \ldots, m;
$$
note that we have, in particular  
$$T^m_{j, 0} = T_{j},  \qquad T^m_{j, m} = T_{j+1} $$
and
$$
T^{m}_{j, k} - T^{m}_{j, k-1}  = \frac{c}{j^{\varepsilon} m}, \qquad k=1, \ldots, m.
$$
We then choose $c>0$ sufficiently small that
 we can apply Proposition \ref{1/2SmoothingNLSPert} 
on each interval $[T^{m}_{j, k-1}, T^{m}_{j, k}]$.  
Let $s' < s$ and $s'' \in (\frac{s' + s}{2}, s)$. We define
  \begin{equation}\label{Def:BadSets}
  E_{j,m} := F_{j,m}  
  \cup \Omega_{\frac{c}{j^{\varepsilon} m}}, \qquad 
  F_{j,m} := \big\{ f  : \sup_{t \in \mathbb{R}} \| e^{i t\Delta} f \|_{\mathcal C^{s''}}  > m + T_j^{\varepsilon}  \big\}.
  \end{equation}
Now we define the second sequence of time scales. Let $m \in \N^*$. We consider a partition of the time interval $[1, m]$ into 
$\simeq m^2$ intervals 
$$[\mathcal{T}_{k-1}, \mathcal{T}_k] := \left[1 +  \frac{k-1}{m} ,  1 +  \frac{k}{m} \right] , \qquad k = 1, \ldots, m(m-1)$$
of length $1/m$. We define
\begin{equation}\label{FnDef}
E_m := \Omega_{\frac1m} \cup F_m, \qquad 
 F_{m} := \big\{ f  : \sup_{t \in \mathbb{R}} \| e^{i t\Delta} f \|_{\mathcal C^{s''}}  > m^{\varepsilon}  \big\}.
\end{equation}
The exceptional set is 
\begin{equation}\label{Def:GammaJ} 
E := \bigcap_{\ell \geq 1}  \bigcup_{M \geq 1} \bigcap_{N \geq M}
  \bigcup_{m \geq \ell} (K_m \cup L_m ), 
   \end{equation}
where
\begin{equation}\label{DEf:KmLm}
K_m :=    \bigcup_{j \geq 1} \bigcup_{k=1}^m
 (\tilde{\Phi}_{1, T^{m}_{j, k-1}}^N)^{-1} E_{j,m}, \qquad 
L_m :=  \bigcup_{k= 1 , \ldots, m(m-1)} 
 (\tilde{\Phi}_{1, \mathcal{T}_{k - 1}}^N)^{-1} E_m.
\end{equation}


\begin{lemma}\label{LemmaMesureE}
Let $s \in (0,1)$, $0 \leq s' < s$ and $\varepsilon \in (0,\frac12)$. We have $$\rho_s(E) =0, $$
where $E$ is the exceptional set defined in \eqref{Def:GammaJ}.  
\end{lemma}
\begin{proof}
We claim that for all $\varepsilon \in (0,\frac12)$ we have
\begin{equation}\label{fdjskdjhgfdsgm}
\rho_s (K_m \cup L_m) \lesssim_{\varepsilon, \beta} 
   e^{- C_{\beta}  m^{\min(\varepsilon, \beta)}} ,
\end{equation}
where $\beta >0$ is the small parameter from Proposition~\ref{1/2SmoothingNLSPert} (we assume here $\beta \in(0,1)$) and 
$C_{\beta} >0$. 
We will prove the inequality \eqref{fdjskdjhgfdsgm} in a second time. Let $\kappa = \min (\varepsilon, \beta)$. We have $\kappa >0$. From \eqref{fdjskdjhgfdsgm} we deduce 
   $$
\rho_s \left(   \bigcup_{  m \geq \ell} ( K_m \cup L_m) \right) \lesssim 
\sum_{m \geq \ell}  m e^{- C m^{\kappa }} 
\lesssim_{\varepsilon} 
\sum_{m \geq \ell}   e^{- \frac{C}{2} m^{\kappa} }  \lesssim_{\varepsilon}
e^{- \frac{C}{4} \ell^{\kappa}} .
  $$
Thus 
\begin{align*}
\rho_s  \left(   \bigcup_{M \geq 1} \bigcap_{N \geq M}
  \bigcup_{   m \geq \ell} ( K_m \cup L_m) \right)
&  \lesssim \liminf_{N \to \infty} 
 \rho_s \left(    \bigcup_{   m \geq \ell} ( K_m \cup L_m) \right)  
 \\ &
 \lesssim_{\varepsilon} 
 \liminf_{N \to \infty}  
e^{ - \frac{C}{4} \ell^{\kappa}} =  e^{ - \frac{C}{4} \ell^{\kappa}} .
\end{align*}  
Thus
\begin{align*}
\rho_s(E) & := \rho_s \left( \bigcap_{\ell \geq 1}  \bigcup_{M \geq 1} \bigcap_{N \geq M}   \bigcup_{   m \geq \ell} 
( K_m \cup L_m)  \right) 
 \\ &
\leq \inf_{\ell \geq 1} 
\rho_s \left(   \bigcup_{M \geq 1} \bigcap_{N \geq M}  \bigcup_{   m \geq \ell} 
( K_m \cup L_m)  \right) \leq \inf_{\ell \geq 1} C_{\varepsilon} e^{ - \frac{C}{4} \ell^{\kappa}}  = 0.
\end{align*}
%
We now prove the claim \eqref{fdjskdjhgfdsgm}
 We first show 
 $$
\rho_s (K_m) \lesssim_{\varepsilon} 
   e^{- C m^{\varepsilon}} .
$$
Recall that 
$ K_m := \bigcup_{j \geq 1} \bigcup_{k=1}^m
 (\tilde{\Phi}_{1, T^{m}_{j, k-1}}^N)^{-1} E_{j,m}$ and 
$$
 E_{j,m} := F_{j,m}  
  \cup \Omega_{\frac{c}{j^{\varepsilon} m}}, \qquad 
  F_{j,m} := \big\{ f  : \sup_{t \in \mathbb{R}} \| e^{i t\Delta} f \|_{\mathcal C^{s''}}  > m + T_j^{\varepsilon}  \big\}.
$$
 Using Lemma \ref{LemmaDelta} with $\delta = (m + T_j^{\varepsilon})^{-1}$ and at H\"older regularity $ s''\in (\frac{s' + s}{2}, s)$  we have
 \begin{equation}\label{djslkgjnjkdsjnjgksdg}
  \rho_s (F_{j,m}) \leq  \gamma_s (F_{j,m}) \lesssim e^{- C( m + T_j^{  \varepsilon})^2} \lesssim 
  e^{- C m^2} e^{ -C T_j^{ 2 \varepsilon}} \lesssim e^{- C m^2} e^{ -C j^{2-2\varepsilon}} 
  \end{equation}
where we have used 
   $T_j\simeq j^{1-\varepsilon}$.
  On the other hand, by 
 \eqref{MeasureOmegaExceptPert} we have (recall that $m, j \geq 1$, $c \in (0,1)$, $\beta >0$)
 \begin{equation}\label{djslkgjnjkdsjnjgksdg2}
 \rho_s( \Omega_{\frac{c}{j^{\varepsilon} m}}) \leq 
 \gamma_s (\Omega_{\frac{c}{j^{\varepsilon} m}}) \lesssim 
 e^{- c^{-\beta} j^{\varepsilon \beta} m^{\beta}}
  \lesssim e^{- \frac12 c^{-\beta} m^{\beta}} e^{- \frac12 j^{\varepsilon \beta}}.
\end{equation}
We thus get
  $$
  \rho_s (E_{j,m}) \leq \rho_s (F_{j,m}) +    \rho_s( \Omega_{\frac{c}{j^{\varepsilon} m}})  \lesssim
  e^{- C m^2} e^{ -C j^{2-2\varepsilon}} + e^{- \frac12  c^{-\beta} m^{\beta}} e^{- \frac12  j^{\varepsilon \beta}}.
  $$  
 Now for all $f \in E_{j,m}$ we have  
 \begin{equation}\label{fjdisldkjngioe}
  (\tilde{\Phi}_{1, T^{m}_{j, k-1}}^N)^{-1} f = \tilde{\Phi}_{T^{m}_{j, k-1}, 1}^N f = 
  \Phi^N_{T^{m}_{j, k-1}, 1}  e^{2i \mu_{N}(f) \ln T^{m}_{j, k-1}} f ,
 \end{equation}
 where in the second identity we used \eqref{ComparisonFlows}. On the other hand we have 
 $e^{i \alpha} E_{j,m} = E_{j,m}$ for all $\alpha \in \R$. This is indeed clear for the sets $F_{j,m}$ 
 from their definition and for the sets $\Omega_{\frac{c}{j^{\varepsilon} m}}$ has been observed in Remark \ref{OmegaDeltaStability}. Thus, if  $f \in E_{j,m}$ we have
$ e^{2i \mu_{N}(f) \ln T^{m}_{j, k-1}} f \in E_{j,m}$ as well. Plugging this into \eqref{fjdisldkjngioe} we see that
if $f \in E_{j,m}$ then
$$
  (\tilde{\Phi}_{1, T^{m}_{j, k-1}}^N)^{-1} f \in
  \Phi^N_{T^{m}_{j, k-1}, 1}   E_{j,m} = 
  ( \Phi^N_{1, T^{m}_{j, k-1}})^{-1}   E_{j,m},
 $$
 meaning
$$
  (\tilde{\Phi}_{1, T^{m}_{j, k-1}}^N)^{-1} E_{j,m}  \subseteq 
    ( \Phi^N_{1, T^{m}_{j, k-1}})^{-1}   E_{j,m}.
$$
On the other hand, exchanging the role of the flows in the previous argument we deduce 
$$
  (\Phi_{1, T^{m}_{j, k-1}}^N)^{-1} E_{j,m}  \subseteq 
    ( \tilde{\Phi}^N_{1, T^{m}_{j, k-1}})^{-1}   E_{j,m}
$$
so that 
$$
  (\Phi_{1, T^{m}_{j, k-1}}^N)^{-1} E_{j,m}  = 
    ( \tilde{\Phi}^N_{1, T^{m}_{j, k-1}})^{-1}   E_{j,m}.
$$
Thus,
using the (quantitative) quasi-invariance of the measure $\rho$ proved 
  in~\eqref{QuasiInvQuant} (with say 
  $\kappa=1/2$), we obtain 
  \begin{align}\label{HereInverseFlow}
\rho_s \left( ( \tilde{\Phi}_{1, T^{m}_{j, k-1}}^N)^{-1} E_{j,m} \right) 
& = \rho_s \left( ( \Phi_{1, T^{m}_{j, k-1}}^N)^{-1} E_{j,m} \right) 
\\ \nonumber 
&
\lesssim  
\left( \rho_s (  E_{j,m} ) \right)^{1/2} 
\lesssim   e^{- \frac12 C m^2} e^{ - \frac12 C j^{2-2\varepsilon}} + e^{- \frac14 c^{-\beta} m^{\beta}} e^{- \frac14 j^{\varepsilon \beta}}. 
  \end{align}
From this we deduce, for some $C, C_{\beta} >0$
$$
\rho_s (K_m) = \rho_s \left(    \bigcup_{j \geq 1} \bigcup_{k=1}^{m}
 (\tilde{\Phi}_{1, T^{m}_{j, k-1}}^N)^{-1} E_{j, m} \right) \lesssim 
 \sum_{j \geq 1} m e^{- 2 C_{\beta} m^{\beta}} e^{- C j^{\varepsilon \beta}}
\lesssim_{\varepsilon} 
   e^{- C_{\beta} m^{\beta}} \,.
$$
Now we show 
$$\rho_s (L_m)   \lesssim_{\varepsilon} 
  e^{-  \frac14 m^{\min(\beta, 2 \varepsilon)}} \,.
  $$
We recall that 
$$ L_m :=  \bigcup_{k= 1 , \ldots, m(m-1)} 
 (\tilde{\Phi}_{1, \mathcal{T}_{k - 1}}^N)^{-1} E_m
 $$
 and 
$$E_m := \Omega_{\frac1m} \cup F_m, \qquad 
 F_{m} := \big\{ f  : \sup_{t \in \mathbb{R}} \| e^{i t\Delta} f \|_{\mathcal C^{s''}}  > m^{\varepsilon}  \big\}.
$$
From \eqref{MeasureOmegaExceptPert} we obtain 
  \begin{equation}\label{HereInverseFlowbis}
 \rho_s (  \Omega_{\frac1m} )
\lesssim e^{-   m^\beta}.
  \end{equation}
 Using Lemma \ref{LemmaDelta} with $\delta = m^{-\varepsilon}$ and at H\"older regularity $s''  \in (\frac{s' + s}{2}, s)$  we have
 \begin{equation}
  \rho_s (F_{m}) \leq  \gamma_s (F_{m}) \lesssim e^{- m^{2\varepsilon} }.
  \end{equation}
Thus
$$
\rho_s  ( E_{m} ) \lesssim  e^{-  m^{\beta}} + e^{- m^{2\varepsilon} }.
$$
Exactly as before we can prove
$$
  (\Phi_{1, T_{k-1}}^N)^{-1} E_{m}  = 
    ( \tilde{\Phi}^N_{1, T_{k-1}})^{-1}   E_{m},
$$
thus using the quantitative quasi-invariance of the measure $\rho_s$ from~\eqref{QuasiInvQuant} with $\kappa =1/2$ we 
obtain
$$
\rho_s \left( \tilde{\Phi}_{1, \mathcal{T}_{k - 1}}^N)^{-1} E_m  \right) 
\lesssim 
\rho_s( E_m )^{1/2} \lesssim    
e^{-  \frac12 m^{\beta}} + e^{- \frac12 m^{2\varepsilon} }.
$$
Thus
\begin{align*}
\rho_s (L_m) & = \rho_s \left(    \bigcup_{k= 1 , \ldots, m(m-1)} 
 (\tilde{\Phi}_{1, T_{k - 1}}^N)^{-1} \Omega_{\frac1m} \right)
 \\ & 
  \lesssim  \sum_{k= 1 , \ldots, m(m-1)}   
 e^{-  \frac14 m^{\beta}} + e^{- \frac12 m^{2\varepsilon} }
  \lesssim 
  m^2 e^{-  \frac12 m^{\beta}} + m^2e^{- \frac12 m^{2\varepsilon} }
  \lesssim_{\varepsilon, \beta} 
  e^{-  \frac14 m^{\min(\beta, 2 \varepsilon)}} .
\end{align*} 
This completes the proof of the claim \eqref{LemmaMesureE} and thus of the lemma.
\end{proof}

It will be useful to the keep in mind that
\begin{equation}\label{fdnjskjhbhjjbhghgyhgfhdjs}
E^\complement = 
\bigcup_{\ell \geq 1}  \bigcap_{M \geq 1} \bigcup_{N \geq M} \bigcap_{   m \geq \ell}
(K_m^{\complement} \cap L_m^{\complement} )
 \end{equation}
 and 
 $$
  K_m^{\complement} = 
   \bigcap_{j \geq 1} \bigcap_{k=1}^m
  (\tilde{\Phi}_{1, T^m_{j, k}}^N)^{-1} E_{j, m}^{\complement} ,
  \qquad 
  L_m^{\complement} :=  \bigcap_{k= 1 , \ldots, m(m-1)} 
 (\tilde{\Phi}_{1, \mathcal{T}_{k - 1}}^N)^{-1} E_m^{\complement}.
 $$
From Lemma \ref{LemmaMesureE} it follows that 
$$
\rho_s(E^{\complement}) = 1.
$$
Let $0 \leq s' < s$ with $s \in (0,1)$. We recall that we
aim to control the growth as $t \to \infty$ of the 
$\mathcal C^{s'}$-norm of $\rho_s$-typical solutions in a quantitative way.
To do so we first control the norm of solutions of the truncated equation, taking advantage of the quantitative
quasi-invariance of the measure $\rho_s$ proved in 
\eqref{QuasiInvQuant}. We obtain the following statement. 
\begin{prop}\label{QuantControlCs}
Let $s \in (0,1)$, $0 \leq s' < s$ and $\varepsilon \in (0,1/2)$. Let $f \in E^{\complement}$. There exists 
 $T(f)\geq 1$, $C(f) >0$ and a diverging sequence of natural numbers $\{ N_p(f) \}_{p \in \N}$
such that 
\begin{equation}\label{GlobSmooth2Preq}
   \sup_{n \in \N}\|  \tilde{\Phi}_{1, t}^{N_p(f)} f \|_{\mathcal C^{s'}} \lesssim C(f) +  t^{\varepsilon}.
\end{equation}
as well as
\begin{equation}\label{GlobSmooth2PreqTWU}
 \sup_{n \in \N}\|  \tilde{\Phi}_{1, t}^{ N_p(f)} P_{\leq  N_p(f)} f \|_{\mathcal C^{s'}} \lesssim C(f) +  t^{\varepsilon}.
\end{equation}
Moreover, 
given any $t>T(f)$ the estimate 
\begin{equation}\label{QuantControlCstrisFinalBound}
\left\| \tilde{\Phi}_{1, t}(P_{\leq N_p(f)} f) - \tilde{\Phi}_{1, t}^{N_p(f)}(P_{\leq N_p(f)}f)  \right\|_{\mathcal C^{s'}} \lesssim 
C^{t^2}  N_p(f)^{\frac{s' -s}{2}}
\end{equation}
holds for all $p$ sufficiently large.
\end{prop}
\begin{remark}
Using \eqref{ComparisonFlows} we see that under the same assumptions we have 
\begin{equation}\label{GlobSmooth2Preqngjfkejngjfdkjgn}
   \sup_{n \in \N}\|  \Phi_{1, t}^{ N_p(f)} f \|_{\mathcal C^{s'}} \lesssim C(f) +  t^{\varepsilon}
\end{equation}
as well as
\begin{equation}\label{GlobSmooth2PreqngjfkejngjfdkjgnTWU}
 \sup_{n \in \N}\|  \Phi_{1, t}^{\tilde N_n(f)} P_{\leq \tilde N_n(f)} f \|_{\mathcal C^{s'}} \lesssim C(f) +  t^{\varepsilon}.
\end{equation}
\end{remark}
\begin{proof}
Let $f \in E^{\complement}$. Remember $\rho_s (E^{\complement}) = 1$ and  
$$E^\complement = 
\bigcup_{\ell \geq 1}  \bigcap_{M \geq 1} \bigcup_{N \geq M} \bigcap_{   m \geq \ell}
(K_m^{\complement} \cap L_m^{\complement} ),
$$
where
 $$
  K_m^{\complement} = 
   \bigcap_{j \geq 1} \bigcap_{k=1}^m
  (\tilde{\Phi}_{1, T^m_{j, k}}^N)^{-1} E_{j, m}^{\complement} ,
  \qquad 
  L_m^{\complement} :=  \bigcap_{k= 1 , \ldots, m(m-1)} 
 (\tilde{\Phi}_{1, \mathcal{T}_{k - 1}}^N)^{-1} E_m^{\complement}.
 $$
Thus if $f \in E^\complement$ then it exists 
an integer $\bar \ell (f)$ and a subsequence $N_p(f) \overset{p \to \infty}{ \to } \infty$ 
such that 
\begin{equation}\label{ndjskdkjsngjdskdjgnjskPreq}
f \in   (\tilde{\Phi}_{1, T^{m}_{j, k-1}}^{N_p(f)})^{-1} E_{j, m}^{\complement} \qquad  \forall m \geq \bar \ell (f), \, \forall j \geq 1, \, \forall k=1, \ldots, m 
\end{equation}
and
\begin{equation}\label{ndjskdkjsngjdskdjgnjsk}
f \in   (\tilde{\Phi}_{1, T_{k - 1}}^{N_p(f)})^{-1} E_m^{\complement} 
\qquad  \forall m \geq \bar \ell (f), \, \forall k=1, \ldots, m(m-1) .
\end{equation}
We will use \eqref{ndjskdkjsngjdskdjgnjskPreq} to deduce \eqref{GlobSmooth2Preq}-\eqref{GlobSmooth2PreqTWU} and 
\eqref{ndjskdkjsngjdskdjgnjsk} to deduce \eqref{QuantControlCstrisFinalBound}.
We start proving \eqref{GlobSmooth2Preq}-\eqref{GlobSmooth2PreqTWU}.
Recalling \eqref{Def:BadSets} we see that if $f \in  (\tilde{\Phi}_{1, T^{m}_{j, k-1}}^N)^{-1} E_{j, m}^{\complement}$ 
then 
 \begin{equation}\label{WeSee1}
 \sup_{t \in \R} \| e^{it\Delta} \tilde{\Phi}_{1, T^{m}_{j, k-1}}^{N} f \|_{\mathcal C^{s' + \kappa}} \leq m + T_j^{\varepsilon}.  \qquad 
\end{equation}
On the other hand, 
the solution 
$$
w^N(t) := \tilde{\Phi}_{1, t}^N f  
$$
can 
 be represented,
over any interval $[T^{m}_{j, k-1}, T^{m}_{j, k}]$, 
as in Proposition \ref{1/2SmoothingNLSPert}:
\begin{equation}\label{nfdlksjdngjskjdngjskdjng}
P_{\leq N} \tilde{\Phi}_{1, t}^N f  = e^{i (t - T^{m}_{j, k-1})  \Delta} P_{\leq N} \tilde{\Phi}_{1, T^{m}_{j, k-1}}^N f
  + \tilde{w}^N(t),   \qquad t \in [T^{m}_{j, k-1}, T^{m}_{j, k}].
\end{equation}
If we  take $\sigma < 1/2$ sufficiently close to $1/2$ and $s''\in (s',s)$, in such a way that 
$$s' + 1/2 < s'' + 1/2<s + \sigma,$$ 
then  we get  by  Sobolev embedding and \eqref{BoundWNSmoothing} 
\begin{equation}\label{ErdLocPreqPreq}
\sup_{t \in [T^{m}_{j, k-1}, T^{m}_{j, k}] }  
\| \tilde{w}^N(t) \|_{\mathcal C^{s''}(\T)} 
 \lesssim
\sup_{t \in [T^{m}_{j, k-1}, T^{m}_{j, k}] } \| \tilde{w}^N(t) \|_{H^{s + \sigma}(\T)} 
\lesssim 1,
\end{equation}
as long as $\tilde{\Phi}_{1, T^{m}_{j, k-1}}^N f \in \Omega_{\frac{c}{j^{\varepsilon} m}}^\complement$.
\\

We note that if $f \in  (\tilde{\Phi}_{1, T^{m}_{j, k-1}}^N)^{-1} E_{j, m}^{\complement}$ then 
$\tilde{\Phi}_{1, T^{m}_{j, k-1}}^N f \in E_{j, m} ^{\complement}$. Since 
$E_{j, m} ^{\complement} \subseteq \Omega_{\frac{c}{j^{\varepsilon} m}}^\complement$ we are 
 allowed to use \eqref{ErdLocPreqPreq}.
Combining it with \eqref{WeSee1} and with a boundedness property of $P_{\leq N}$ over H\"older spaces 
we deduce 
\begin{equation}\label{WeSee2}
\sup_{t \in [T^{m}_{j, k -1}, T^{m}_{j, k}]} \|  P_{\leq N} \tilde{\Phi}_{1, t}^{N} f \|_{\mathcal C^{s'}} \lesssim m + T_j^{\varepsilon}, 
\end{equation}
for all $f \in  (\tilde{\Phi}_{1, T^{m}_{j, k-1}}^N)^{-1} E_{j, m}^{\complement}$. 
We now look at high Fourier modes. We have (recall \eqref{HighModesLinearEvolutionGauged})
\begin{equation}\label{nfdlksjdngjskjdngjskdjng2}
P_{> N} \tilde{\Phi}_{1, t}^N f  =  
e^{- 2 i \mu_N (\ln t - \ln T^{m}_{j, k-1}) + i (t - T^{m}_{j, k-1})  \Delta} P_{> N} \tilde{\Phi}_{1, T^{m}_{j, k-1}}^N f.
\end{equation}
Thus if $f \in  (\tilde{\Phi}_{1, T^{m}_{j, k-1}}^N)^{-1} E_{j, m}^{\complement}$
we can invoke again
\eqref{WeSee1} to prove
\begin{equation}\label{WeSee2HighModes}
\sup_{t \in [T^{m}_{j, k -1}, T^{m}_{j, k}]} \|  P_{> N} \tilde{\Phi}_{1, t}^{N} f \|_{\mathcal C^{s'}} \lesssim m + T_j^{\varepsilon}.
\end{equation}
In conclusion, using \eqref{ndjskdkjsngjdskdjgnjskPreq} we can deduce that there exists an integer 
$\bar \ell (f)$ and a subsequence $N_p(f) \overset{p \to \infty}{ \to } \infty$ 
such that  
\begin{equation}\label{RightOne}
\sup_{k = 1, \ldots, m} \, \sup_{t \in [  T^{m}_{j, k-1}, T^{m}_{j, k} ]  }  \|  \tilde{\Phi}_{1, t}^{N_p(f)} f \|_{\mathcal C^{s'}} \lesssim m +  T_j^{\varepsilon} 
\qquad \forall  m \geq \bar \ell (f), \, \forall j \geq 1.   
\end{equation}
Since 
$$[1, \infty) = \bigcup_{j \geq 1} \bigcup_{k = 1, \ldots, m}  [ T^{m}_{j, k-1}, T^{m}_{j, k} ] , \qquad \forall m \in \N^*$$
we can use  \eqref{RightOne} to deduce that for $f \in E^\complement$ there exists an integer 
$\bar \ell (f)$ and a subsequence~$N_p(f) \overset{p \to \infty}{ \to } \infty$ 
such that  
\begin{equation*}
\sup_{p \in \N}
 \|  \tilde{\Phi}_{1, t}^{N_p(f)} f \|_{\mathcal C^{s'}} \lesssim    t^{\varepsilon} + \bar \ell (f) 
\qquad \forall  m \geq \bar \ell (f),    
\end{equation*}
from which we can deduce 
 \eqref{GlobSmooth2Preq}. The estimate \eqref{GlobSmooth2PreqTWU} can be deduced in the same way, 
noting that $P_{\leq N} E^\complement \subset E^\complement$ (see definitions \ref{Def:BadSets}-\eqref{FnDef} and use 
\eqref{fdnjskjhbhjjbhghgyhgfhdjs} and Remark \eqref{OmegaDeltaStability}) and 
\begin{equation}\label{PNFlowIdentity}
\tilde{\Phi}_{1, t}^{\tilde N_n(f)} P_{\leq \tilde N_n(f)} f  = P_{\leq \tilde N_n(f)}  \tilde{\Phi}_{1, t}^{\tilde N_n(f)}  f  ;
\end{equation}
and invoking the boundedness of $P_{\leq N}$ from $C^s$ to $C^{s - \varepsilon}$, $\varepsilon >0$.
The identity \eqref{PNFlowIdentity} easily follows from the definition of $\tilde{\Phi}_{1, t}^{\tilde N_n(f)}$
as flow associated with the equations \eqref{NLSvIntrogauged}-\eqref{HighModesLinearEvolutionGauged}.  
\\

We will now use \eqref{ndjskdkjsngjdskdjgnjsk} to deduce \eqref{QuantControlCstrisFinalBound}. 
We will in fact prove that for all $f$ that satisfy \eqref{ndjskdkjsngjdskdjgnjsk} we have the following: given any natural number $m \geq \bar \ell(f) $
and $t \in [\mathcal{T}_{k-1}, \mathcal{T}_k ]$, $k = 1, \ldots, m(m-1)$, we have 
%
%
\begin{equation}\label{PertGrowthBis}
\tilde{\Phi}_{1, t}(P_{\leq N_p} f) \in \tilde{\Phi}_{1, t}^{N_p}( P_{\leq N_p} f)  +  P_{> N_p} E_m^\complement + 
B^{s' + \sigma}\left((4C)^{k-1} N_p^{s' -s}\right)
\end{equation}\label{djskhhgfjkdsdjhg}
for all $p$ sufficiently large that 
$$(4C)^{m^2} N_p^{s' -s} < 1.$$
Here $C >1$ is the constant from Propositions \ref{1/2SmoothingNLSPertTris}-\ref{1/2SmoothingNLSPert2} and  we used the abbreviation $N_p = N_p(f)$. 
We prove \eqref{PertGrowthBis}  by induction of $k$.
Since 
$$
f \in   (\tilde{\Phi}_{1, \mathcal{T}_{0}}^{N_p(f)})^{-1} E_m^{\complement} = (\tilde{\Phi}_{1, 1}^{N_p(f)})^{-1} E_m^{\complement} = 
E_m^{\complement} ,
$$
 the case $k=1$ is covered by 
Proposition \ref{1/2SmoothingNLSPert2}. 
Thus we assume that for some $k \geq 2$ we have 
\begin{equation}\label{Growth1}
\tilde{\Phi}_{1, t}(P_{\leq N_p} f) \in \tilde{\Phi}_{1, t}^{N_p}(P_{\leq N_p}  f)  + P_{> N_p}  E_m^\complement +
B^{s' + \sigma}\left((4C)^{k-2} N_p^{s' -s}\right), \qquad t \in [\mathcal{T}_{k-2}, \mathcal{T}_{k-1} ]
\end{equation} 
and we want to show 
\begin{equation}\label{Growth1Thesis}
\tilde{\Phi}_{1, t}(P_{\leq N_p} f)  \in \tilde{\Phi}_{1, t}^{N_p}(P_{\leq N_p} f) + P_{> N_p}   E_m^\complement +
B^{s' + \sigma}\left( (4C)^{k-1} N_p^{s' -s} \right), \qquad t \in  [\mathcal{T}_{k-1}, \mathcal{T}_{k} ].
\end{equation}

In fact we will only use the 
induction assumption \eqref{Growth1} at $t= \mathcal{T}_{k-1}$, namely  
\begin{equation}\label{Growth1used}
\tilde{\Phi}_{1, \mathcal{T}_{k-1}}(P_{\leq N_p} f) \in \tilde{\Phi}_{1, \mathcal{T}_{k-1}}^{N_p}(P_{\leq N_p} f)  + P_{> N_p}   E_m^\complement +
B^{s' + \sigma}\left((4C)^{k-2} N_p^{s' -s}\right).
\end{equation}
We thus consider $t \in [\mathcal{T}_{k-1}, \mathcal{T}_{k}]$. Using the 
group property of the flow, we have
\begin{align}\nonumber
 & \tilde{\Phi}_{1, t}  P_{\leq N_p} f  - \tilde{\Phi}_{1, t}^{N_p}  P_{\leq N_p} f 
 \\ \nonumber  & =
   \tilde{\Phi}_{\mathcal{T}_{k-1}, t}   \tilde{\Phi}_{ 1, \mathcal{T}_{k-1} } P_{\leq N_p} f - \tilde{\Phi}_{\mathcal{T}_{k-1}, t} \tilde{\Phi}_{ 1, \mathcal{T}_{k-1} }^{N_p}  P_{\leq N_p} f  
  \\ \nonumber &+  \tilde{\Phi}_{\mathcal{T}_{k-1}, t} \tilde{\Phi}_{ 1, \mathcal{T}_{k-1} }^{N_p} P_{\leq N_p} f  - \tilde{\Phi}_{\mathcal{T}_{k-1}, t}^{N_p} \tilde{\Phi}_{ 1, \mathcal{T}_{k-1} }^{N_p}  P_{\leq N_p} f.
\end{align}
We will show that 
\begin{equation}\label{FirstInd}
  \tilde{\Phi}_{\mathcal{T}_{k-1}, t}   \tilde{\Phi}_{ 1, \mathcal{T}_{k-1} } P_{\leq N_p} f - \tilde{\Phi}_{\mathcal{T}_{k-1}, t} \tilde{\Phi}_{ 1, \mathcal{T}_{k-1} }^{N_p} P_{\leq N_p} f   
  \in P_{> N_p}   E_m^\complement + 
  B^{s' + \sigma}\left((4C)^{k-2} 2C N_p^{s' -s}\right)
\end{equation}
and 
\begin{equation}\label{SecondInd}
\tilde{\Phi}_{\mathcal{T}_{k-1}, t} \tilde{\Phi}_{ 1, \mathcal{T}_{k-1} }^{N_p} P_{\leq N_p} f  - \tilde{\Phi}_{\mathcal{T}_{k-1}, t}^{N_p } \tilde{\Phi}_{ 1, \mathcal{T}_{k-1} }^{N_p} P_{\leq N_p} f \in 
  B^{s' + \sigma}\left((4C)^{k-2} 2C   N_p^{s' -s} \right) ,
\end{equation}
from which we deduce \eqref{Growth1Thesis}.

\underline{Proof of \eqref{FirstInd}} : 
by the definition of the truncated flow it is clear that 
$$\tilde{\Phi}_{ 1,  t}^{N} P_{\leq N} f =  P_{\leq N} \tilde{\Phi}_{ 1, t }^{N}  f.$$ 
Recalling \eqref{ndjskdkjsngjdskdjgnjsk}
we see that $\tilde{\Phi}_{ 1, \mathcal{T}_{k-1} }^{N_p}  f \in \Omega_{\frac1m}^\complement$ and thus
$$
 \tilde{\Phi}_{ 1, \mathcal{T}_{k-1} }^{N_p} P_{\leq N_p}  f  = P_{\leq N} \tilde{\Phi}_{ 1, \mathcal{T}_{k-1} }^{N_p}  f \in P_{\leq N_p} E_m^\complement \subset E_m^\complement,
$$
where in the last identity we used $P_{\leq N} E_m^\complement \subset  E_m^\complement$ that easily follows from the 
definition \eqref{FnDef} and from Remark \ref{OmegaDeltaStability}.
\\

By the induction assumption \eqref{Growth1used}, we also have 
$$
\tilde{\Phi}_{1, \mathcal{T}_{k-1}}(P_{\leq N_p} f) \in \tilde{\Phi}_{1, \mathcal{T}_{k-1}}^{N_p}( P_{\leq N_p} f)  
+ P_{> N_p}  E_m^\complement 
+   B^{s' + \sigma}\left((4C)^{k-2}  N_p^{s' -s} \right).
 $$
Since $E_m^{\complement} \subset \Omega_{\frac1m}^\complement$  we can apply 
 Proposition \ref{1/2SmoothingNLSPertTris} to deduce
 \begin{equation}\label{FirstIndPreq}
  \tilde{\Phi}_{\mathcal{T}_{k-1}, t}   \tilde{\Phi}_{ 1, \mathcal{T}_{k-1} } P_{\leq N_p} f - \tilde{\Phi}_{\mathcal{T}_{k-1}, t} 
  \tilde{\Phi}_{ 1, \mathcal{T}_{k-1} }^{N_p} P_{\leq N_p} f   
  \in P_{> N_p}   e^{i(t - \mathcal{T}_{k-1})\Delta} E_n^\complement + 
  B^{s' + \sigma}\left((4C)^{k-2} 2C N_p^{s' -s}\right).
 \end{equation}
On the other hand, looking at the definition \eqref{FnDef} and recalling Remark \eqref{OmegaDeltaStability}, we 
note that we have $e^{i(t - \mathcal{T}_{k-1})\Delta} E_m^{\complement} = E_m^{\complement}$. Thus \eqref{FirstIndPreq} implies \eqref{FirstInd}.

\underline{Proof of \eqref{SecondInd}}: using again 
$ \tilde{\Phi}_{ 1, \mathcal{T}_{k-1} }^{N_p} P_{\leq N_p}  f  \in  E_m^\complement \subseteq \Omega_{\frac1m}^\complement$,
we see that we can deduce~\eqref{SecondInd} from 
Proposition~\ref{1/2SmoothingNLSPert2}. 
\\

 Recall $s'' \in (\frac{s' + s}{2}, s)$. Using Bernstein inequality 
and recalling~\eqref{FnDef} , we can show that for all $f \in F_{m}^{\complement} \subset E_{m}^{\complement}$ and for all 
we have for all $\varepsilon >0$: 
\begin{equation}\label{fndjskdjgf}
  \|   P_{\geq N_p} f \|_{\mathcal C^{s'}_{x}} \lesssim 
 N_p^{s' - s'' + \varepsilon'}  \|   P_{\geq N_p} f \|_{\mathcal C^{s''}_{x}} \lesssim 
  N_p^{\frac{s' - s}{2} } m^{\varepsilon} 
   \lesssim N_p^{\frac{s' - s}{2} } m^{\varepsilon};
\end{equation}
in the second inequality we have chosen $\varepsilon' >0$ sufficiently small.
Combining \eqref{PertGrowthBis}-\eqref{fndjskdjgf} we have proved that 
for all $f \in E^{\complement}$ there 
exists a natural number $\bar \ell(f) \geq 1$ and
a subsequence $N_{p} = N_p(f) \to \infty$ 
such that the following holds. Given  $m \geq \bar \ell(f)$ and $t \in [1, m]$, we have
\begin{equation}
\left\| \tilde{\Phi}_{1, t}(P_{\leq N_p(f)} f) - \tilde{\Phi}_{1, t}^{N_p(f)}(P_{\leq N_p(f)}f)  \right\|_{\mathcal C^{s'}} \lesssim 
(5C)^{m^2}  N_p^{\frac{s' -s}{2}} ,
\end{equation}
for all $p$ that satisfy \eqref{djskhhgfjkdsdjhg}. 
Taking $t \in [m-1, m]$, we see from the estimate above that
for all $f \in E^{\complement}$  there 
exists a $N_{p} = N_p(f) \to \infty$ and 
 $T(f) \geq 1$ such that, given any $t > T(f)$ one has
\begin{equation}
\left\| \tilde{\Phi}_{1, t}(P_{\leq N_p(f)} f) - \tilde{\Phi}_{1, t}^{N_p(f)}(P_{\leq N_p(f)}f)  \right\|_{\mathcal C^{s'}} \lesssim 
(5C)^{t^2}  N_p^{\frac{s' -s}{2}} ,
\end{equation}
for all $p$ sufficiently large (depending on $t$).  
The statement \eqref{QuantControlCstrisFinalBound} then follows, after renaming the constant $C$.
\end{proof}
\subsection{Proof of Theorem \ref{QuantControlCsBisFinalBound}}
Using Proposition~\ref{QuantControlCs}  
 we can  finally control of the growth as $t \to \infty$ of the 
$\mathcal C^{s'}$-norm of $\rho_s$-typical solutions, namely we are ready to prove theorem \ref{QuantControlCsBis}. 
From \eqref{ComparisonFlows} (with $N= \infty$) it suffices to prove that 
for $\rho_s$-almost every $f$ there exists a
 diverging sequence of natural numbers $\{ N_n(f) \}_{n \in \N}$
 and $C(f) >0$, $T(f) \geq 1$ such that for all $t > T(f)$: 
\begin{equation}\label{QuantControlCsBisFinalBound2}
 \limsup_{n \to \infty} \|  \tilde{\Phi}_{1, t} P_{\leq N_n(f)} f \|_{ \mathcal C^{s'}} \lesssim C(f) +  t^{\varepsilon}.
\end{equation}
It follows then from Proposition \ref{QuantControlCs} that 
for $\rho_s$-almost every $f$ there exist 
a
 diverging sequence of natural numbers $\{ N_n(f) \}_{n \in \N}$
 and $C(f) >0, T(f)\geq1$
such that
\begin{equation}\label{jfdusiygygygygy}
 \sup_{n \in \N}\|  \tilde{\Phi}_{1, t}^{N_n(f)} P_{\leq N_n(f)} f \|_{ \mathcal C^{s'}} \lesssim C(f) +  t^{\varepsilon}
\end{equation}
and such that, given any $t > T(f)$ one has
\begin{equation} 
\left\| \tilde{\Phi}_{1, t}(P_{\leq N_n(f)} f) - \tilde{\Phi}_{1, t}^{N_n(f)}( P_{\leq N_n(f)} f)  \right\|_{\mathcal C^{s'}} \lesssim 
C^{t^2}  N_n(f)^{\frac{s' -s}{2}} ,
\end{equation}
for all $n$ sufficiently large.
We thus get by triangle inequality that 
that 
for $\rho_s$-almost every $f$ there exists a
 diverging sequence of natural numbers $\{ N_n(f) \}_{n \in \N}$
 and $C(f) >0, T(f)\geq1$ 
such that, for all $t > T(f)$ one has 
\begin{equation}\label{QuantControlCsBisFinalBound2dfmsjkdgjdnjfdknjgnjhdfngkjn}
 \|  \tilde{\Phi}_{1, t} P_{\leq N_n(f)} f \|_{ \mathcal C^{s'}} \lesssim C(f) +  t^{\varepsilon} + 
 C^{t^2}  N_n(f)^{\frac{s' -s}{2}},
\end{equation}
for all $n$ sufficiently large. 
The statement then follows passing 
to the $\limsup_{n}$ (recall $s'<s$).
\section{Proof of Theorem \ref{thbf2}}
We are now ready to prove Theorem~\ref{thbf2}. As noted already in the introduction, the first part of Theorem \ref{thbf2} 
is a consequence of Theorem \ref{thbf}, since $\omega$-almost surely we have 
 $a_j^{\omega} \in l^{2, s'}$, for all $0 \leq s' < s$. The fact that the curve $\chi^{\omega}(t)$ belongs to $\mathcal C^{2+s'}(\R)$
 ($\omega$-almost surely) is a consequence of 
 Corollary \ref{CorWeak1}, equation \eqref{GaugeMap}, the pseudoconformal change of variable and the
 (almost-sure) local well-posedenss theory developed in Section \ref{Sec:thbf2}.
Thus it only remains to prove
  the existence of the limit curve as $t \to 0$ and the 
  fact that the trajectories in time  $\chi^\omega(\cdot, x)$ have H\"older regularity $\mathcal C^{\frac 12-\varepsilon}$ for all $x\in\mathbb R$. In order to do so we will use
  the crucial estimate \eqref{QuantControlCsBisFinalBound} from Theorem \ref{QuantControlCsBis}.  
We denote with $v_1^\omega$ the initial datum of the NLS equation \eqref{NLSvIntro}, that is distributed as the 
random Fourier series \eqref{TMIATSIntro} and 
$$
v_1^{\omega,n} := P_{\leq n} v_1^\omega,
$$
where $P_{\leq n}$ is the projection of the first $n \in 2^{\N}$ Fourier modes.
Let then $v^{\omega,n} = \Phi_{1, t} v_1^{\omega,n}$ the associated solution.
By pseudoconformal transformation we then have a solution $u^{\omega,n}$ of the cubic NLS
 \eqref{CubicNLS} and by the Hasimoto procedure a curve $\chi^{\omega,n}$ that satisfies 
the binormal flow equation \eqref{BinormalVeryFirst}. 
Invoking \eqref{QuantControlCsBisFinalBound} we see that for almost every $\omega$ it exists $C(\omega) >0$,
$T(\omega) \geq1$
and a diverging sequence of natural numbers $\{ N_{k}(\omega) \}_{k \in \N}$ such that for all  $t > T(\omega)$:
\begin{equation}\label{QuantControlCsBisFinalBoundcasa}
\limsup_{k \to \infty} \|  \Phi_{1, t} v_1^{\omega, N_k(\omega)} \|_{\mathcal C^{s'}} \lesssim C(\omega) +  t^{\varepsilon},
\end{equation}

Let $0 < t_1 < t_2  < 1/T(\omega)$. We can proceed as in \eqref{limitchit}-\eqref{limitchitbis} and use \eqref{chit} to get
\begin{align*}
| \chi^\omega(t_2, x)  -  \chi^\omega(t_1, x) | 
& \leq \limsup_{k\to \infty} | \int_{t_1}^{t_2} \partial_s \chi^{\omega, N_{k}(\omega)}(s, x) ds | 
\\ 
&
\leq   \limsup_{k\to \infty} \int_{t_1}^{t_2} | u^{\omega, N_{k}(\omega)}(x, s) | ds .
 \end{align*}
 Using the pseudo-conformal transformation and changing variable $s= \frac{1}{t}$
 in the integral we get, for all $\varepsilon \in (0, \frac14)$:
 \begin{align}\label{CauchyInTime}
| \chi^\omega(t_2, x) -  \chi^\omega(t_1, x) | 
&
 \leq  \limsup_{k \to \infty} \int_{t_1}^{t_2}  
  \frac{1}{\sqrt{s}} \Big| v^{\omega, N_{k}(\omega)} \left( \frac{1}{s}, \frac{x}{s} \right) \Big| ds 
\\ \nonumber
&
 = 
 \limsup_{k \to \infty} \int_{1/t_2}^{1/t_1}  t^{-\frac{3}{2}}  | v^{\omega, N_{k}(\omega)} \left( t, t x \right) | dt
\\  \nonumber
&
= \limsup_{k \to \infty} \int_{1/t_2}^{1/t_1}  t^{-\frac{3}{2}}  | (\Phi_{1, t} v_1^{\omega, N_{k}(\omega)})(tx) | dt  
\\ 
& \nonumber
\leq   \int_{1/t_2}^{1/t_1} \big( C(\omega) t^{-\frac{3}{2}} + C t^{-\frac{3}{2} + \varepsilon} \big)  dt 
\\ \nonumber
&
\leq 4 \big( C(\omega) t_2^{\frac{1}{2}} + C t_2^{\frac{1}{2} - \varepsilon}   \big) 
\end{align} 
where we have used \eqref{QuantControlCsBisFinalBoundcasa} in the second inequality.

Since $$\lim_{t_2 \to 0} C(\omega) t_2^{\frac{1}{2}} + C t_2^{\frac{1}{2} - \varepsilon}  =0  $$
we have proved that, $\omega$-almost surely, the curve $ \chi^\omega(t, x) $ converges uniformly (w.r.t $x$) to a limit curve 
$$
\chi^\omega(0, x) = \lim_{t \to 0} \chi^\omega(t, x),
$$
as $t \to 0$. 
The limit curve $\chi^\omega(0, x)$ is continuous. 
From \eqref{CauchyInTime} we also get that $\chi^\omega(\cdot, x)$ is $\mathcal C^{\frac 12-\varepsilon}$. This concludes the proof of Theorem \ref{thbf2}.

\end{document}